\numberwithin{equation}{section}
\theoremstyle{plain} 
\newtheorem{theorem}[equation]{Theorem}
\newtheorem{lemma}[equation]{Lemma}
\newtheorem{proposition}[equation]{Proposition}
\newtheorem{corollary}[equation]{Corollary}
\theoremstyle{definition}
\newtheorem{definition}[equation]{Definition}
\newtheorem{example}[equation]{Example}
\newtheorem{remark}[equation]{Remark}
\newtheorem*{remark*}{Remark}
\newcommand{\classificationtheoremtext}{
  Suppose that $n=mp^{k}$, where $m$ is coprime to~$p$, and suppose
  that $H\subseteq\Un$ is a \pdash toral subgroup of $\Un$ that contains~$S^{1}$.
  Let $\Gamma_{k}$ act on $\Cn$ by $m$ copies of the standard representation of $\Gamma_{k}$ on~$\Cpk$.
\begin{enumerate}
\item If $m=1$, then $H$ is problematic if and only if $H$ is conjugate to a subgroup of~$\Gamma_{k}$.
\item If $m>1$, then $H$ is problematic if and only if $m$ is a power of a prime different from~$p$ and $H$ is conjugate to a coisotropic subgroup of~$\Gamma_{k}$.
\end{enumerate}
}
\newtheorem*{classificationtheorem}
   {Theorem~\ref{theorem: new classification theorem}}
\theoremstyle{plain} 
\newtheorem*{elementaryabeliantheorem}
         {Theorem~\ref{thm: H elementary abelian}}
\newcommand{\elementaryabeliantheoremtext}
{
  Let $H$ be a problematic \pdash toral subgroup of $\Un$ that contains~$S^{1}$. Then
  \begin{enumerate}
  \item  \label{item: elem abelian}
  $H$ is a projective elementary abelian \pdash group, and
  \item \label{item: character}
  the character of $H$ is
  $\begin{cases}
  \character{H}(h)=0 & h\not\in S^{1}\\
  \character{H}(h)=nh & h\in S^{1}.
  \end{cases}$
  \end{enumerate}
}
\newtheorem*{subgroupGammadiag}
{Theorem~\ref{theorem: Non_contractible_implies_subgroup_Gamma_diag}}
\newcommand{\subgroupGammadiagtext}
{ Let $H$ be a problematic \pdash toral subgroup of $\Un$ that contains~$S^{1}$, and
  suppose that $n=mp^{k}$, where $m$ and $p$ are coprime.
  Then $H$ is conjugate to a subgroup of
  $\Gamma_{k}\subset \Un$.  }
\newtheorem*{simplyconnectedthm}
         {Theorem~\ref{theorem: simply connected}}
\newcommand{\simplyconnectedtext}{
If $n\geq 3$, then $\Lcal_{n}$ is simply connected.
}
\newtheorem*{corollarycontractible}{Corollary~\ref{corollary: contractible}}
\newcommand{\corollarycontractibletext}{
The space $\Lcal_{n}$ is contractible if and only if
$n$ is not a power of a prime.
}
\newtheorem*{jointheorem}
         {Theorem~\ref{theorem: join theorem}}
\newcommand{\jointheoremtext}{
Suppose that $n=mp^{k}$ with $k\geq 1$.  Then there is an equivalence of $\Uof{m}$-spaces
\[
    \Lcal_{m}\join\weakfixedspecific{\Gamma_{k}}{p^{k}}
             \rightarrow
             \weakfixedspecific{\Gamma_{k}}{mp^{k}}.
\]
}
\newcommand{\defining}[1]{{\emph{#1}}}
\DeclareMathOperator{\Unif}{Unif}
\DeclareMathOperator{\Isotyp}{Isotyp}
\newcommand{\tensorsubcomplex}{Z}
\newcommand{\Uniform}[2]{\Unif\!\weakfixedspecific{#1}{#2}}
\newcommand{\isotypicsubcomplex}{\Isotyp\!\weakfixedspecific{\Gamma_{k}}{mp^k}}
\newcommand{\first}{X}
\newcommand{\second}{Y}
\newcommand{\isogroupof}[1]{I_{#1}}
\newcommand{\UnIsotropyof}[1]{K_{#1}}
\newcommand{\objcomponent}[1]{C_{#1}}
\newcommand{\morphcomponent}[2]{M_{\typeof{#1\rightarrow #2}}}
\newcommand{\Cyl}[2]{\rm{Cyl}\left(#1\rightarrow #2\right)}
\newcommand{\nund}{{\underline{n}}}
\newcommand{\type}{t}
\newcommand{\typeof}[1]{\type(#1)}
\newcommand{\morphtriangle}[2]{T_{\typeof{#1\rightarrow #2}}}
\newcommand{\reals}{{\mathbb{R}}}
\newcommand{\integers}{{\mathbb{Z}}}
\newcommand{\complexes}{{\mathbb{C}}}
\newcommand{\field}{{\mathbb{F}}}
\newcommand{\join}{\ast}
\newcommand{\Z}{\integers}
\newcommand{\Cpk}{\complexes^{p^{k}}}
\newcommand{\Cn}{\complexes^{n}}
\newcommand{\Upk}{U\kern-2.5pt\left(p^{k}\right)}
\newcommand{\Upp}{U\kern-.2pt(p)}
\newcommand{\Uof}[1]{U\kern-2pt\left(#1\right)}
\newcommand{\Un}{\Uof{n}}
 \newcommand{\character}[1]{\chi_{#1}}
 \newcommand{\repn}[1]{\rho_{#1}}
\newcommand{\Hbarmodp}{\Hbar/\Phi\!\left(\Hbar\right)}
\newcommand{\HH}{\mathbb{H}} 
\newcommand{\TT}{\mathbb{T}} 
\newcommand{\weakfixed}[1]{ \left(\Lcal_{n}\right)^{#1} }
\newcommand{\weakfixedspecific}[2]{ \left(\Lcal_{#2}\right)^{#1} }
\newcommand{\strongfixed}[1]{ \left(\Lcal_{n}\right)^{#1}_{\,\st}}
\newcommand{\strongfixedspecific}[2]{ \left(\Lcal_{#2}\right)^{#1}_{\,\st}}
\newcommand{\isofixed}[1]{\left(\Lcal_{n}\right)^{#1}_{\,\operatorname{iso}}}
\newcommand{\isofixedspecific}[2]{\left(\Lcal_{#2}\right)^{#1}_{\,\operatorname{iso}}}
\newcommand{\glom}[2]{ #1_{\,\st(#2)}}
\newcommand{\isorefine}[2]{ #1_{\,\operatorname{iso}(#2)}}
\newcommand{\DiagGamma}[2]{\operatorname{Diag}_{#1}\kern-2.8pt\left(\Gamma_{#2}\right)}
\newcommand{\sk}{{\rm{sk}}}
\newcommand{\skel}[2]{\sk^{#1}\!\left(#2\right)}
\newcommand{\degenerate}{L}
\newcommand{\definedas}[1]{{\emph{#1}}}
\newcommand{\n}{\mathbf{n}} 
\newcommand{\vvec}[1]{v_{\scriptscriptstyle{\!#1}}}
\newcommand{\vprimevec}[1]{v'_{\scriptscriptstyle{\!#1}}}
\newcommand{\wvec}[1]{w_{\scriptscriptstyle{\!#1}}}
\def\doCal#1{%
\ifx#1\doAllCalEnd\def\doAllCal{\relax}\else%
 \expandafter\edef\csname#1cal\endcsname{{\noexpand\mathcal #1}}\fi}
\def\doAllCal#1{\doCal#1\doAllCal}
\def\doBar#1{%
\ifx#1\doAllBarEnd\def\doAllBar{\relax}\else%
 \expandafter\edef\csname#1bar\endcsname{{\noexpand\overline{#1}}}\fi}
\def\doAllBar#1{\doBar#1\doAllBar}
\def\doWiggle#1{%
\ifx#1\doAllWiggleEnd\def\doAllWiggle{\relax}\else%
 \expandafter\edef\csname#1wiggle\endcsname{{\noexpand\tilde{#1}}}\fi}
\def\doAllWiggle#1{\doWiggle#1\doAllWiggle}
\newcommand{\whatever}{\text{--}}
\newcommand{\Cen}{C}
\DeclareMathOperator{\class}{cl}
\DeclareMathOperator{\Cylinder}{Cyl}
\DeclareMathOperator{\End}{End}
\newcommand{\Id}{I}
\DeclareMathOperator{\im}{im}
\DeclareMathOperator{\Morph}{Morph}
\DeclareMathOperator{\Nerve}{Nerve}
\DeclareMathOperator{\Obj}{Obj}
\DeclareMathOperator{\rank}{rk}
\DeclareMathOperator{\st}{st}
\DeclareMathOperator{\tr}{tr}
\newcommand{\pdash}{$p$\kern1.3pt-}
\newcommand{\CentIdent}[1]{C_{0}\!\left(#1\right)}
\newcommand{\nontransitive}[1]{{\rm{NonTran}}\left(\Lcal_{n}\right)^{#1}}
\newcommand{\nontransitivespecific}[2]{{\rm{NonTran}}\left(\Lcal_{#2}\right)^{#1}}
\newcommand{\properspecific}[2]{{\rm{Move}}\left(\Lcal_{#2}\right)^{#1}}
\newcommand{\intersect}{\Ical}
\newcommand{\dimv}{r}
\begin{document}

\title[Problematic subgroups]
{Classification of problematic subgroups of $\Un$}

\author{Julia E.\ Bergner} \address{Department of Mathematics,
  University of Virginia, Charlottesville, VA} \email{bergnerj@member.ams.org}

\author{Ruth Joachimi}
\address{Department of Mathematics and Informatics, University of Wuppertal, Germany}
\email{joachimi@math.uni-wuppertal.de}
\author{Kathryn Lesh}
\address{Department of Mathematics, Union College, Schenectady NY}
\email{leshk@union.edu}
\author{Vesna Stojanoska}
\address{Department of Mathematics, University of Illinois at Urbana-Champaign, Urbana IL}
\email{vesna@illinois.edu}
\author{Kirsten Wickelgren}
\address{School of Mathematics, Georgia Institute of Technology, Atlanta GA}
\email{kwickelgren3@math.gatech.edu}

\date{\today}

\maketitle
\markboth{\sc{Bergner, Joachimi, Lesh, Stojanoska, and Wickelgren}}
{\sc{Problematic subgroups of $\Un$}}

\begin{abstract}
Let $\Lcal_{n}$ denote the topological poset of
decompositions of $\Cn$ into mutually orthogonal subspaces.
We classify \pdash toral subgroups of $\Un$ that can have noncontractible
fixed points under the action of $\Un$ on~$\Lcal_{n}$.
\end{abstract}

\section{Introduction}

Throughout the paper, let $p$ denote a fixed prime. Let $\Pcal_{n}$ be the
$\Sigma_{n}$-space given by the nerve of the
poset category of proper nontrivial partitions of the set
$\{1,\dots,n\}$. In~\cite{ADL2}, the authors compute the Bredon homology
and cohomology groups of $\Pcal_{n}$ with certain \pdash local coefficients.
The computation is part of a program to give a
new proof of the Whitehead Conjecture and the collapse of the homotopy
spectral sequence for the Goodwillie tower of the identity, one
that does not rely on the detailed knowledge of homology used in
\cite{Kuhn-Whitehead}, \cite{Kuhn-Priddy},
and~\cite{Behrens-Goodwillie-Tower}.  For appropriate \pdash local
coefficients, the Bredon homology and cohomology of~$\Pcal_{n}$ turn
out to be trivial when $n$ is not a power of the prime~$p$, and
nontrivial in only one dimension when $n=p^{k}$ (\cite{ADL2}
Theorem~1.1 and Corollary~1.2). A key ingredient of the proof is the
identification of the fixed point spaces of \pdash subgroups of
$\Sigma_{n}$ acting on $\Pcal_{n}$.  If a \pdash subgroup
$H\subseteq\Sigma_{n}$ has noncontractible fixed points, then $H$
gives an obstruction to triviality of Bredon homology, so it is
``problematic'' (see Definition~\ref{defn: problematic} below). It
turns out that only elementary abelian \pdash subgroups of
$\Sigma_{n}$ with free action on $\{1,\dots,n\}$ can have
noncontractible fixed points on~$\Pcal_{n}$
 (\cite{ADL2} Proposition~6.2). The proof
of the main result of \cite{ADL2} then proceeds by showing that, given
appropriate conditions on the coefficients, these problematic
subgroups can be ``pruned'' or ``discarded'' in most cases, resulting
in sparse Bredon homology and cohomology.

In this paper, we carry out the fixed point calculation analogous to
that of \cite{ADL2} in the context of unitary groups. The calculation
is part of a program to establish the conjectured $bu$-analogue of the
Whitehead Conjecture and the conjectured collapse of the homotopy spectral
sequence of the Weiss tower for the functor $V\mapsto B\Uof{V}$
(see~\cite{Arone-Lesh-Crelle} Section~12).  Let $\Lcal_{n}$ denote the
(topological) poset category of decompositions of $\Cn$ into
nonzero proper orthogonal subspaces.  The category $\Lcal_{n}$ was
first introduced in~\cite{Arone-Topology}.  It is internal to the
category of topological spaces: both its object set and its morphism
set have a topology. (See~\cite{Banff1} for a detailed discussion and
examples.)
The action of the unitary group
$\Un$ on $\Cn$ induces a natural action of $\Un$ on
$\Lcal_{n}$, and the Bredon homology of the $\Un$-space $\Lcal_{n}$
plays an analogous role in the unitary context to the part played by
the Bredon homology of the $\Sigma_{n}$-space~$\Pcal_{n}$ in the
classical context.
The complex $\Lcal_{n}$ has a similar flavor to the ``stable building"
or ``common basis complex" studied in~\cite{Rognes-Topology, RognesPreprint}.

In passing from finite groups to compact Lie groups, one usually replaces the notion of finite \pdash groups with that of \pdash toral groups (i.e., extensions
of finite \pdash groups by tori). Our goal in this paper is to
identify the \pdash toral subgroups of $\Un$ that have noncontractible fixed
point spaces on $\Lcal_{n}$. These groups will be the obstructions to
triviality of the Bredon homology and cohomology of $\Lcal_{n}$ with
suitable \pdash local coefficients.  Hence we make the following
definition.

\begin{definition} \label{defn: problematic}
  A closed subgroup $H\subseteq\Un$ is called \definedas{problematic} if the
  fixed point space of $H$ acting on the nerve of $\Lcal_{n}$ is not
  contractible.
\end{definition}

In this paper, we give a complete classification
of problematic \pdash toral subgroups of $\Un$. To state the result,
we recall that there is a family of \pdash toral subgroups
$\Gamma_{k}\subseteq\Upk$ with the following key properties:
(i)~$\Gamma_{k}$ acts irreducibly on $\Cpk$,
and (ii)~$\Gamma_{k}$ is an extension of the central $S^{1}\subseteq\Upk$
by an elementary abelian \pdash group. The subgroups $\Gamma_{k}$
have appeared in numerous works,
such as \cite{Griess}, \cite{JMO}, \cite{Oliver-p-stubborn},
\cite{Arone-Topology}, ~\cite{Arone-Lesh-Crelle}, and~\cite{AGMV}.  The
structure of $\Gamma_{k}\subset\Upk$ is described explicitly in
Section~\ref{section: Gamma_k}, and we call the corresponding action of $\Gamma_{k}$
on $\Cpk$ the \defining{standard action} of~$\Gamma_{k}$.

For any $n=mp^k$, we can consider $\Gamma_{k}$ acting on $\complexes^{mp^{k}}$
by the $m$-fold multiple of the standard action. That is, we choose a fixed isomorphism
$\Cn\cong\complexes^{m}\otimes\Cpk$, and we let $\Gamma_{k}$ act trivially on $\complexes^{m}$, and by the standard action of $\Gamma_{k}$ on~$\Cpk$. A subgroup $H\subseteq\Gamma_{k}$ is coisotropic if it contains the subgroup of $\Gamma_{k}$ that centralizes it, $C_{\Gamma_{k}}(H)\subseteq H$. (These are, in fact, the $p$-centric subgroups of~$\Gamma_{k}$.)
When we write~$S^{1}$, we always mean the center of~$\Un$.

\begin{theorem}\label{theorem: new classification theorem}
\classificationtheoremtext
\end{theorem}

Note that there is no loss of generality in assuming that $S^{1}\subseteq H$ in
Theorem~\ref{theorem: new classification theorem}. This is because
$S^{1}$ acts on $\Cn$ via multiplication by scalars, and fixes any object of~$\Lcal_{n}$.
Further, $H$ is a \pdash toral subgroup of~$\Un$ if and only if $HS^{1}$ is \pdash toral, and $\weakfixed{HS^{1}}=\weakfixed{H}$. Hence $H$ is problematic if and only if $HS^{1}$ is problematic.

The remainder of the introduction is devoted to outlining the proof
of Theorem~\ref{theorem: new classification theorem}.
We begin with the special case $H=S^{1}$.  Since $\weakfixed{S^1}=\Lcal_{n}$,
the question of whether $S^{1}$ is problematic is asking
if $\Lcal_{n}$ is contractible. We only need one new ingredient to answer this question.

\begin{simplyconnectedthm}
\simplyconnectedtext
\end{simplyconnectedthm}

Homology considerations then give us the following corollary, which says that $S^{1}$ is problematic if and only if $n$ is a power of a prime.

\begin{corollarycontractible}
\corollarycontractibletext
\end{corollarycontractible}

For \pdash toral subgroups $H\subseteq\Un$ that strictly contain~$S^{1}$,
the proof of Theorem~\ref{theorem: new classification theorem}
has two major components. The first is a general argument to establish
that if $n=mp^{k}$ (with $m$ and $p$ coprime) and $H$ is a problematic
\pdash toral subgroup of~$\Un$, then $H$ is conjugate to a subgroup of
$\Gamma_{k}\subset\Un$. The second consists of checking which of these
remaining possibilities are actually problematic, by analyzing the resulting fixed point spaces.

To outline the reduction to subgroups of~$\Gamma_{k}\subset\Un$, we need a little more terminology.  The center of $\Un$ is
$S^{1}$, acting on $\Cn$ via scalar multiples of the
identity matrix, and the \definedas{projective unitary group} $P\Un$
is the quotient~$\Un/S^{1}$.  We say that a closed subgroup
$H\subseteq \Un$ is a \definedas{projective elementary abelian \pdash
  group} if the image of $H$ in $P\Un$ is an elementary abelian \pdash
group.  Lastly, if $H$ is a closed subgroup of~$\Un$, we write
$\character{H}$ for the character of $H$ acting on $\Cn$
through the standard action of $\Un$ on $\Cn$.

\begin{elementaryabeliantheorem}
  \elementaryabeliantheoremtext
\end{elementaryabeliantheorem}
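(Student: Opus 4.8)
The plan is to prove the contrapositive of both statements at once: assuming $H$ is problematic (so $\weakfixed{H}$ is not contractible), I will show that $\Hbar$ must be elementary abelian and that $\character{H}$ vanishes off $S^1$. By Lemma~\ref{lemma: only rank one torus} we already know $\Hbar$ is a finite $p$-group. The strategy is to find, whenever something goes wrong, a normal subgroup $V\cong\integers/p$ of $\Hbar$ acting intransitively on the classes of every weakly $H$-fixed partition, which by Lemma~\ref{lemma: detection mechanism} forces $\weakfixed{H}$ to be contractible, a contradiction.

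First I would handle the group structure of $\Hbar$. If $\Hbar$ is not elementary abelian, then since it is a finite $p$-group, its Frattini quotient $\Hbar/p$ is a nontrivial elementary abelian $p$-group that is a proper quotient, so the Frattini subgroup $\Phi(\Hbar)$ is nontrivial. Pick a central subgroup $V\cong\integers/p$ of $\Hbar$ contained in $\Phi(\Hbar)$ (the center of a $p$-group meets every nontrivial normal subgroup). The key point is that such a $V$, being inside the Frattini subgroup, cannot surject onto the classes of any weakly $H$-fixed partition $\lambda$: the action of $\Hbar$ on $\class(\lambda)$ factors through a permutation representation, and a Frattini-subgroup element acts trivially on every transitive quotient of the $\Hbar$-set in which the point stabilizers are maximal — more carefully, if $V$ acted transitively on an orbit then that orbit would have size $p$ with $V$ a complement to a point stabilizer, contradicting $V\subseteq\Phi(\Hbar)$. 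Hence $V$ satisfies the hypotheses of Lemma~\ref{lemma: detection mechanism}, contradiction; so $\Hbar$ is elementary abelian.

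Next I would pin down the character. Suppose $\character{H}(h)\neq 0$ for some $h\in H$ with $h\notin S^1$. Write $\bar h$ for its image in $\Hbar$; since $\Hbar$ is elementary abelian we may choose a subgroup decomposition and, after replacing $h$ by a power if necessary, extract a normal $V=\langle\bar h\rangle\cong\integers/p$ (every subgroup of an elementary abelian group is normal in it, but we need normality in $\Hbar$, which is automatic). The plan is to show this $V$ acts intransitively on $\class(\lambda)$ for every $\lambda\in\weakfixed{H}$. If it did act transitively on some orbit of size $p$ in $\class(\lambda)$, then $h$ would cyclically permute $p$ mutually orthogonal subspaces $v_0,\dots,v_{p-1}$ of a common dimension $d$; the trace of such a block-cyclic-permutation-with-phases unitary matrix on $v_0\oplus\cdots\oplus v_{p-1}$ is zero. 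Combining this over all orbits and using that $\Hbar$ is abelian (so the only fixed classes contribute, and on those $h$ acts as a scalar forcing $h\in S^1$ unless that block is empty) shows $\character{H}(h)=0$ unless $h\in S^1$, and for $h\in H\cap S^1$ the scalar action on all of $\complexes^n$ gives $\character{H}(h)=nh$. The remaining task is to rule out the transitive case globally: if $V$ acted transitively on $\class(\lambda)$ for some $\lambda$, then since $h\notin S^1$ it genuinely permutes the blocks, and I must argue that this cannot happen for \emph{every} such $\lambda$ simultaneously while $H$ remains problematic — here one invokes that $\character{H}(h)\neq 0$ together with the trace computation directly yields a contradiction once intransitivity is established, closing the loop via Lemma~\ref{lemma: detection mechanism}.

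The main obstacle I anticipate is the verification that a suitably chosen $V\cong\integers/p$ really does act intransitively on $\class(\lambda)$ for \emph{all} weakly $H$-fixed $\lambda$ — a single bad partition is enough to break the argument. The Frattini-subgroup case is clean because intransitivity is essentially automatic; the character case is more delicate, since there one is choosing $V$ based on a nonzero character value, and one must connect "transitive action on some partition's classes" to "zero trace" tightly enough that the assumption $\character{H}(h)\neq 0$ itself supplies the contradiction. I expect the bilinear-form/representation-theoretic bookkeeping of which irreducibles of $\Hbar$ appear, and with what multiplicity, to be where the real work lies, though much of it may be deferred by citing \cite{Proc}.
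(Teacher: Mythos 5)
Your part~(1) is essentially the paper's argument, recast in the language of the Frattini subgroup $\Phi(\Hbar)$ rather than the kernel of $\Hbar\to\Hbar/p$; these are the same normal subgroup, and your ``$V$ complements a point stabilizer'' reasoning is a correct reformulation of the paper's observation that a transitive action of $\Hbar$ on a $p$-element set factors through $\Hbar/p$. That part is fine.

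Part~(2) is where you yourself flag uncertainty, and that uncertainty points at a real organizational gap. You do have the key ingredient: if $V=\langle\bar h\rangle$ acts transitively on $\class(\lambda)$, then $h$ permutes $p$ mutually orthogonal blocks in a fixed-point free cycle, so a suitable $V$-invariant basis of $\complexes^{n}$ is permuted without fixed points by $h$ and $\character{H}(h)=0$. But you then drift into ``combining over all orbits'' and an attempt to use abelianness of $\Hbar$ to force $h$ to act as a scalar on fixed classes; neither of these is needed, and the second is not correct as stated, since $\Hbar$ being abelian does not make $h$ act by a scalar on a subspace it merely stabilizes. The clean route, which the paper takes, is direct rather than contrapositive: since $H$ is problematic, $\weakfixed{H}$ is \emph{not} contractible, and $V$ is normal in $\Hbar$ and isomorphic to $\Z/p$ (using part~(1)); so the contrapositive of Lemma~\ref{lemma: detection mechanism} forces the \emph{existence} of some $\lambda\in\weakfixed{H}$ on which $V$ acts transitively. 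Apply your trace computation to that one $\lambda$ and you are done. Your worry that ``a single bad partition is enough to break the argument'' has the roles reversed: that single bad partition is exactly the partition the proof uses. If you prefer the contradiction phrasing, the inference you need is: $\character{H}(h)\neq 0$ together with the trace computation implies $V$ is transitive on \emph{no} $\lambda\in\weakfixed{H}$, whence Lemma~\ref{lemma: detection mechanism} gives contractibility and contradicts problematicity. Your final paragraph instead says ``once intransitivity is established,'' which is circular --- the trace computation is what establishes intransitivity, not something to be invoked afterward. No bookkeeping of irreducibles of $\Hbar$ is required.
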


The first part of Theorem~\ref{thm: H elementary abelian} was the
principal result of~\cite{Banff1}. For completeness, we give a
streamlined proof in the current work, in order to obtain the second
part of the theorem, the subgroup's character.

The character data of Theorem~\ref{thm: H elementary abelian} allows
us to narrow down the problematic \pdash toral subgroups of $\Un$ to a
very small, explicitly described collection. The subgroups
$\Gamma_{k}\subseteq\Upk$ (see Section~\ref{section: Gamma_k})
satisfy the conclusions of Theorem~\ref{thm: H elementary abelian}.
The next step is to show that they generate all the possibilities.

\begin{subgroupGammadiag}
\subgroupGammadiagtext
\end{subgroupGammadiag}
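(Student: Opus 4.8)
The plan is to combine the group-theoretic classification from Section~\ref{section: alternating forms} with the character data from Theorem~\ref{thm: H elementary abelian} and the divisibility statement of Lemma~\ref{lemma: n=mp^k}. First I would reduce to the case $S^{1}\subseteq H$: since $H\subseteq HS^{1}$, the images of $H$ and $HS^{1}$ in $P\Un$ agree, and $\weakfixed{H}=\weakfixed{HS^{1}}$, so $HS^{1}$ is also problematic; if we prove the theorem for $HS^{1}$, then $H\subseteq HS^{1}$ is conjugate into $\DiagGamma{m}{k}$ as well. So assume $S^{1}\subseteq H$. By Theorem~\ref{thm: H elementary abelian}\eqref{item: elem abelian}, $H$ is then an abstract projective elementary abelian \pdash group in the sense of Definition~\ref{defn: abstract proj elem}, so by Proposition~\ref{proposition: group classification} there are nonnegative integers $s,t$ with $H\cong\Gamma_{s}\times\Delta_{t}$ as abstract groups, where $2s+t=\rank(V)$ and $V=\Hbar$.

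Next I would pin down $n$ and the representation $\repn{H}$ using characters. Choose a finite subgroup $G\subseteq H$ that still surjects onto $\Hbar$ — for instance the subgroup generated by lifts of generators of $\Hbar$ together with the elements of order $p$ in $S^{1}$; concretely, under the isomorphism $H\cong\Gamma_{s}\times\Delta_{t}$ one can take $G$ to be generated by $\Acal_{s}$, $\Bcal_{s}$, the order-$p$ subgroup of the central $S^{1}$, and $\Delta_{t}$, a finite \pdash group of order $|G|=p\cdot p^{2s}\cdot p^{t}=p^{\,2s+t+1}$. By Theorem~\ref{thm: H elementary abelian}\eqref{item: character}, $\character{H}$ vanishes off $S^{1}$, so in particular $\character{G}(y)=0$ for $y\neq e$ (any nonidentity $y\in G$ maps to a nonidentity element of $\Hbar$ and hence lies outside $S^{1}$). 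Lemma~\ref{lemma: n=mp^k} then gives $|G|\mid n$ and shows $\repn{G}$ is $n/|G|$ copies of the regular representation of $G$. Writing $n=mp^{k}$ with $p\nmid m$, divisibility by $p^{\,2s+t+1}$ forces $2s+t+1\le k$; I will actually want the sharper statement relating $s,t$ directly to $k$, which comes from examining how $\repn{H}$ decomposes — and this is where the argument becomes delicate.

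The main obstacle is to upgrade ``$H$ is abstractly isomorphic to $\Gamma_{s}\times\Delta_{t}$ and has the right character'' to ``$H$ is \emph{conjugate in $\Un$} to a subgroup of $\DiagGamma{m}{k}$.'' The tool is that over $\C$ a representation is determined up to conjugacy by its character, so it suffices to (i) decompose $\repn{H}$ into $H$-isotypic pieces, (ii) identify each piece, and (iii) check that the resulting representation factors through the standard inclusion of some $\DiagGamma{m'}{k'}$ with $m'p^{k'}$ dividing $n$ appropriately, then pad with a trivial summand of $H/S^{1}$ if necessary. The key representation-theoretic facts I would assemble are: the irreducible representations of $\Gamma_{k}$ on which the central $S^{1}$ acts by the standard character are all of dimension $p^{k}$ (there are $p^{k}$ of them, permuted freely and transitively by $\Hbar/(\text{symplectic part})$), and $\Delta_{t}$ contributes a trivial-action direction that just multiplies the number of blocks. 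Concretely: since $S^{1}\subseteq H$ acts on $\complexes^{n}$ by scalars via the standard character (part~\eqref{item: character} again, with $h\Id$), \emph{every} $H$-irreducible summand of $\complexes^{n}$ is one on which $S^{1}$ acts standardly; such an irreducible of $\Gamma_{s}\times\Delta_{t}$ is (irreducible of $\Gamma_{s}$ with standard central character) $\otimes$ (character of $\Delta_{t}$), hence has dimension exactly $p^{s}$. So $p^{s}\mid n$ and $\complexes^{n}$ is a sum of $n/p^{s}$ such irreducibles. Vanishing of $\character{H}$ off $S^{1}$ forces the multiset of these irreducibles to be ``balanced'' — summing their characters must cancel everywhere except on $S^{1}$ — and the standard orthogonality computation (exactly as in Lemma~\ref{lemma: n=mp^k}, applied now with $G$ as above) shows each of the $|G|/p = p^{2s+t}$ irreducibles with standard central character occurs with the \emph{same} multiplicity $m$, and $n = m\cdot p^{s}\cdot p^{2s+t}\cdot(1/p^{s})$... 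I will instead phrase it cleanly as: $\repn{G}=(n/|G|)\,\C[G]$, and $\C[G]$ restricted to standard-central-character isotypic part is precisely the representation underlying $\Gamma_{s}\times\Delta_{s+t}$ sitting block-diagonally, i.e.\ $\DiagGamma{1}{s+t}$ acting on $\complexes^{p^{s+t}}$ through its $p^{s+t}/p^{s}\cdot p^{s}=\dots$ — the bookkeeping here is the real work. Setting $k=s+t$ (or checking $s+t\le k$ and padding $\Delta_{t}$ up to $\Delta_{k-s}$, which is harmless since $\Delta_{k-s}\supseteq\Delta_{t}$), Lemma~\ref{lemma: Gamma_k subgroups} shows $\Gamma_{s}\times\Delta_{t}\subseteq\Gamma_{k}$, and then $H$, having the same character as a subgroup of $\DiagGamma{m}{k}$ built from this inclusion, is conjugate to that subgroup by the standard uniqueness-of-complex-representations theorem. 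The delicate point throughout is keeping the three numerical parameters $s$, $t$, $k$ (and $m$) consistent: the character hypothesis is exactly strong enough to force $m$ prime to $p$ and $2s+t\le k$ with the ``extra'' $\Delta$-directions absorbable, but verifying that no other configuration survives is the crux.
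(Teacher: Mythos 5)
Your overall architecture matches the paper's: reduce to $S^{1}\subseteq H$, invoke Theorem~\ref{thm: H elementary abelian} and Proposition~\ref{proposition: group classification} to get $H\cong\Gamma_{s}\times\Delta_{t}$ abstractly, apply Lemma~\ref{lemma: n=mp^k} to a finite subgroup to bound the parameters, and then use uniqueness of complex representations with a given character to pin down the conjugacy class. However, there is a genuine error in the middle step. You take $G$ generated by $\Acal_{s}$, $\Bcal_{s}$, $\Delta_{t}$, \emph{and} the order-$p$ subgroup of $S^{1}$, of order $p^{2s+t+1}$, and then assert $\character{G}(y)=0$ for $y\neq e$ because ``any nonidentity $y\in G$ maps to a nonidentity element of $\Hbar$.'' That parenthetical is false: the order-$p$ elements of $S^{1}$ are nonidentity elements of $G$ that lie \emph{inside} $S^{1}$, map to the identity in $\Hbar$, and have character $nh\neq 0$. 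So the hypothesis of Lemma~\ref{lemma: n=mp^k} fails for your $G$, and the conclusion $p^{2s+t+1}\mid n$ (hence $2s+t+1\le k$) does not follow. Indeed it is false in general: for $H=\Gamma_{k}\subseteq\Upk$ one has $s=k$, $t=0$, $n=p^{k}$, and $2s+t+1=2k+1>k$.

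The paper avoids this by choosing the smaller finite subgroup $\Acal_{s}\times\Delta_{t}\cong(\integers/p)^{s+t}$, which meets $S^{1}$ trivially; Lemma~\ref{lemma: n=mp^k} then legitimately gives $p^{s+t}\mid n$, hence $s+t\le k$. Once that bound is in hand, Lemma~\ref{lemma: Gamma_k subgroups} embeds $\Gamma_{s}\times\Delta_{t}$ in $\Gamma_{k}$, and the character comparison (Theorem~\ref{thm: H elementary abelian}\eqref{item: character} against Lemma~\ref{lemma: Diag character}) gives the conjugation — exactly the closing move you sketch, which is correct. You do write ``or checking $s+t\le k$'' near the end, so you sensed the right bound, but you never actually derive it, and the ``delicate bookkeeping'' discussion of isotypic decompositions is not needed once the correct subgroup is used; you should replace the $G$ of order $p^{2s+t+1}$ with $\Acal_{s}\times\Delta_{t}$ and the argument closes cleanly.
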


The strategy of the proof of
Theorem~\ref{theorem: Non_contractible_implies_subgroup_Gamma_diag}
is to use the first part of
Theorem~\ref{thm: H elementary abelian} and bilinear forms to classify
$H$ up to abstract group isomorphism. Then character theory, together
with the second part of Theorem~\ref{thm: H elementary abelian},
allows us to pinpoint the actual conjugacy class of~$H$ in~$\Un$.

After this, the remaining question for the classification theorem is
whether all of the groups named in
Theorem~\ref{theorem: Non_contractible_implies_subgroup_Gamma_diag}
are, in fact, problematic. Our strategy is
to compute the fixed point spaces fairly explicitly. Let $X\join Y$ denote the
join of the spaces $X$ and~$Y$. The following formula was suggested to us by Gregory Arone.

\begin{jointheorem}
\jointheoremtext
\end{jointheorem}

The remainder of the proof of
Theorem~\ref{theorem: new classification theorem} is then a homology
calculation, based on the formula of
Theorem~\ref{theorem: join theorem}.

\bigskip
\mbox{}
\\

The organization of the paper is as follows.

In the first part of the paper, Section~\ref{section: exploration}
and Section~\ref{section: simple connectivity}, we describe some elementary
properties of $\Lcal_{n}$ and prove the simple connectivity result for $n>2$.

The second part of the paper is devoted to establishing the list of candidates for problematic subgroups.
In Section~\ref{section: normal subgroup condition} we give a
normal subgroup condition from which one can deduce contractibility of
a fixed point set~$\weakfixed{H}$. We follow up in
Section~\ref{section: find normal subgroup} with the proof of
Theorem~\ref{thm: H elementary abelian}, by
finding an appropriate normal subgroup unless $H$ is a projective
elementary abelian \pdash subgroup of~$\Un$.
Section~\ref{section: Gamma_k} is expository and discusses the salient
properties of the subgroup $\Gamma_k\subseteq\Upk$. The projective
elementary abelian \pdash subgroups of $\Un$ are classified up to
abstract group isomorphism using bilinear forms
in Section~\ref{section: alternating forms}. They are shown to be isomorphic to $\Gamma_{s}\times\Delta_{t}$ where $\Delta_{t}$ denotes $(\integers/p)^{t}$. In Section~\ref{section: problematic subgroups}, we prove Theorem~\ref{theorem: Non_contractible_implies_subgroup_Gamma_diag}, allowing us to view this abstract group isomorphism as an isomorphism of representations.

The third part of the paper is devoted to checking the
candidates identified in
Theorem~\ref{theorem: Non_contractible_implies_subgroup_Gamma_diag}.
Section~\ref{section: joins} calculates the $U(m)$-equivariant homotopy type of the fixed points of the action of $\Gamma_{s}$ on $\Lcal_{mp^s}$ (Theorem~\ref{theorem: join theorem}). This allows us to compute the fixed points of the action of $\Gamma_s \times \Delta_t$ on $\Lcal_{mp^s}$
by first computing the fixed points under $\Gamma_s$, reducing the problem to the computation of $\Delta_t$-fixed points.
The bulk of Section~\ref{section: proof of classification theorem}
consists of analyzing the problem of $\Delta_{t}$-fixed points, after which we assemble the pieces to prove Theorem~\ref{theorem: new classification theorem}.

\bigskip

\noindent{\bf{Definitions, Notation, and Terminology}}\\
\indent
When we speak of a subgroup of a Lie group, we always mean a closed
subgroup. If we speak of $S^{1}\subseteq\Un$ without any further
description, we mean the center of~$\Un$.

We generally do not distinguish in notation between a category and its
nerve, since the context will make clear which we mean.

We are concerned with actions of subgroups $H\subseteq\Un$
on~$\Cn$; we write $\repn{H}$ for the restriction of the
standard representation of $\Un$ to $H$, and $\character{H}$ for the
corresponding character. We apply the standard terms from
representation theory to $H$ if they apply to~$\repn{H}$.  For
example, we say that $H$ is \definedas{irreducible} if $\repn{H}$ is
irreducible, and we say that $H$ is \definedas{isotypic} if $\repn{H}$
is the sum of all isomorphic irreducible representations of $H$.  We
also introduce a new term: if $H$ is not isotypic, we say that $H$ is
\definedas{polytypic}, as a succinct way to say that ``the action of
$H$ on $\Cn$ is not isotypic.''

A \definedas{decomposition} $\lambda$ of $\Cn$ is an
(unordered) decomposition of $\Cn$ into mutually
orthogonal, nonzero subspaces. We say that $\lambda$ is
\definedas{proper} if it consists of subspaces properly
contained in~$\Cn$.
If  $v_{1}$, \dots, $v_{m}$ are the subspaces in~$\lambda$, then
we call $v_{1}$, \dots, $v_{m}$ the \definedas{components} or
\definedas{classes} of~$\lambda$; we write
$\class(\lambda)$ for $\{v_{1},\dots,v_{m}\}$ when we want to emphasize
the set of subspaces in the decomposition~$\lambda$.

The action of $\Un$ on $\Cn$ induces a natural action of
$\Un$ on $\Lcal_{n}$, and we are interested in fixed points of this
action. If $\lambda$ consists of the subspaces $v_{1}$,\dots, $v_{m}$,
we say that $\lambda$ is
\definedas{weakly fixed} by a subgroup $H\subseteq\Un$ if for every
$h\in H$ and every $v_{i}$, there exists a $j$ such that $hv_{i}=v_{j}$.
(We use the word ``weakly fixed'' here to contrast with the notion
of ``strongly fixed,'' below.)
We write $\weakfixed{H}$ for the full
subcategory of weakly $H$-fixed objects of~$\Lcal_{n}$.
Note that
$\Nerve\left(\left(\Lcal_{n}\right)^{H}\right)
       \cong\left(\strut\Nerve\left(\Lcal_{n}\right)\right)^{H}$,
and we abuse notation by writing $\weakfixed{H}$ for either
the subcategory or its nerve, depending on context.

By contrast, we say that $\lambda$ is \definedas{strongly fixed} by
$H\subseteq\Un$ if for all $i$, we have $Hv_{i}=v_{i}$, that is, each
$v_{i}$ is a representation of~$H$.  We write $\strongfixed{H}$ for
the full subcategory of strongly $H$-fixed objects of $\Lcal_{n}$ (and
for its nerve).

\bigskip

\noindent{\bf{Acknowledgements}}\\
\indent The authors are grateful to Bill Dwyer, Jesper Grodal, and Gregory Arone, for
many helpful discussions about this project. We
also thank Dave Benson, Jeremiah Heller, John Rognes, and David Vogan for helpful
comments on earlier drafts of this document.

The authors thank the Banff International Research Station and
the Clay Mathematics Institute for financial support. The first, third, and fourth authors received partial support from NSF grants
DMS-1105766 and DMS-1352298, DMS-0968251, DMS-1307390 and DMS-1606479, respectively.  The second
author was partially supported by DFG grant HO 4729/1-1, and the fifth
author was partially supported by an AIM 5-year fellowship and
NSF grants DMS-1406380 and DMS-1552730.  Some of
this work was done while the first, fourth, and fifth authors were in
residence at MSRI during the Spring 2014 semester, supported by NSF
grant 0932078 000.

\section{The topological category $\Lcal_{n}$}
\label{section: exploration}

The goal of this section is to offer descriptions and examples of the
object and morphism spaces of $\Lcal_{n}$, to build intuition
for the category. We devote some attention to the fundamental
groups of the connected components of the object and morphism spaces,
since the goal of Section~\ref{section: simple connectivity} is to
establish simple connectivity of the nerve of~$\Lcal_{n}$.
We also refer the reader
to~\cite{Banff1}, where some low-dimensional cases
were studied explicitly. For example, it was established that $\Lcal_{2}$ is
homeomorphic to~$\reals P^{2}$.

From time to time, we refer to the category of unordered partitions
of an integer~$n$, by which we mean the quotient of the action of $\Sigma_{n}$
on the set of partitions of the set $\n=\{1,\ldots,n\}$. We sometimes use a notation for unordered partitions where sizes of components in the partition are underlined, and the multiplicity of components of a certain size is indicated as scalar multiplication. For example, the unordered partition $7=1+3+3$ is denoted
${\underline{1}}+2\cdot{\underline{3}}$, because the partition has two components of size $3$ and one component of size~$1$.

We begin with the object space of $\Lcal_{n}$, denoted by
$\Obj\left(\Lcal_{n}\right)$. The elements are (unordered)
decompositions of $\Cn$ into proper, mutually orthogonal
subspaces.  There is a natural action of $\Un$
on~$\Obj\left(\Lcal_{n}\right)$, and we topologize
$\Obj\left(\Lcal_{n}\right)$ as the disjoint union of orbits of the
$\Un$ action. (This description and another equivalent one are given in~\cite{Banff1}.)

\begin{definition}\mbox{}\hfill  \label{defn: type objects}
\begin{enumerate}
\item If $\lambda\in\Obj\left(\Lcal_{n}\right)$, then the \defining{type}
  of~$\lambda$, denoted by $\typeof{\lambda}$, is the unordered partition of the
  integer $n$ given by the dimensions of the components of $\lambda$.

\item For $\lambda\in\Obj\left(\Lcal_{n}\right)$, we write
 $\UnIsotropyof{\lambda}\subseteq\Un$ for the isotropy subgroup of
  $\lambda$ under the action of~$\Un$, i.e. $\UnIsotropyof{\lambda}$ contains those elements of $\Un$ that weakly fix $\lambda$.

\item \label{item: set with same type}
If $\tau$ is an unordered partition of the integer~$n$,
we gather the decompositions with this type by defining
\begin{align*}
\objcomponent{\tau}&=
  \left\{\lambda\mid\typeof{\lambda}=\tau\right\}
  \subseteq\Obj\left(\Lcal_{n}\right).
\end{align*}
\end{enumerate}
\end{definition}

Since $\Obj\left(\Lcal_{n}\right)$ is topologized as the disjoint
union of $\Un$-orbits, the transitive action of $\Un$ on $n$-frames for
$\Cn$ gives us the following result.

\begin{lemma}   \label{lemma: object component homogeneous}
The subspace $\objcomponent{\typeof{\lambda}}$ is a homogeneous space; specifically,
\[\objcomponent{\typeof{\lambda}}\cong\Un/\UnIsotropyof{\lambda}. \]
The connected components of~$\Obj\left(\Lcal_{n}\right)$
are given by the spaces $\objcomponent{\tau}$ as $\tau$
ranges over unordered partitions of the integer~$n$.
\end{lemma}

\begin{example}
\label{example: component of lines}
  Consider 
  decompositions of $\Cn$ into $n$ mutually orthogonal
  lines.
  An element of the isotropy group of such a decomposition can act by
  $U(1)$ on each of the lines, and can also permute the lines, so the
  isotropy group is~$\Uof{1}\wr\Sigma_{n}$. The connected component of
  $\Obj\left(\Lcal_{n}\right)$ consisting of all decompositions into
  lines is
\[
\objcomponent{n\cdot\underline{1}}\cong\Un/\left(\Uof{1}\wr\Sigma_{n}\right).
\]
\end{example}

Before moving on, we record a generalization of the computation of
Example~\ref{example: component of lines}.

\begin{lemma}    \label{lemma: object isotropy}
If $\lambda\in\Obj\left(\Lcal_{n}\right)$ has type
 $\typeof{\lambda}=k_{1}\cdot\nund_{1}+\dots+k_{j}\cdot\nund_{j}$, then
\[
\UnIsotropyof{\lambda}\cong
   \left(\Uof{n_{1}}\wr\Sigma_{k_{1}}\right)\times\dots\times
    \left(\Uof{n_{j}}\wr\Sigma_{k_{j}}\right).
\]
\end{lemma}

\medskip

Next we consider $\Morph\left(\Lcal_{n}\right)$, the morphism space
of~$\Lcal_{n}$. Between any two objects of $\Obj\left(\Lcal_{n}\right)$,
there is at most one morphism. Hence the source and target maps give
a $\Un$-equivariant monomorphism
\[
\Morph\left(\Lcal_{n}\right)\longrightarrow
    \Obj\left(\Lcal_{n}\right)\times    \Obj\left(\Lcal_{n}\right),
\]
and we give $\Morph\left(\Lcal_{n}\right)$ the subspace topology.

\begin{definition}\mbox{}\hfill  \label{defn: type morphisms}
\begin{enumerate}
\item If $m \colon \lambda\rightarrow\mu$ is a morphism in~$\Lcal_{n}$, the
  \defining{type} of $m$, denoted by $\typeof{m}$ or $\typeof{\lambda\rightarrow\mu}$,
  is the morphism that $m$ induces in the category of unordered partitions of the
  integer~$n$.

\item We define
    $\morphcomponent{\lambda}{\mu}   \subseteq \Morph\left(\Lcal_{n}\right)$ by
\begin{align*}
\morphcomponent{\lambda}{\mu}&=
\left\{\lambda'\rightarrow\mu'\mid
    \typeof{\lambda'\rightarrow\mu'}
       =\typeof{\lambda\rightarrow\mu}\right\}.
\end{align*}
\end{enumerate}
\end{definition}

\begin{example}
Let $\lambda=\{a, b, c, v\}$ be an orthogonal decomposition
of $\complexes^{5}$ into lines $a$, $b$, and $c$ and a two-dimensional
subspace~$v$. There are two different types of morphisms from $\lambda$ to objects in $\objcomponent{{\underline{2}}+{\underline{3}}}$. The first
is:
\begin{align*}
\{a, b, c, v\}&\longrightarrow\{a+ b+ c ,v\}\\
\left({\underline{1}}+{\underline{1}}+{\underline{1}}\right)+{\underline{2}}
&\longrightarrow
{\underline{3}}+{\underline{2}}.
\end{align*}
The second type takes the sum of two of the lines, and adds the third line to the two-dimensional subspace. There are three morphisms of this type that have $\lambda$ as their source:
\begin{align*}
\{a, b, c, v\}&\longrightarrow\{a+ b, c+ v\}\\
\{a, b, c, v\}&\longrightarrow\{a+ c, b+ v\}\\
\{a, b, c, v\}&\longrightarrow\{b+ c, a+ v\}\\
\left({\underline{1}}+{\underline{1}}\right)
      +\left({\underline{1}}+{\underline{2}}\right)
&\longrightarrow
{\underline{2}}+{\underline{3}}.
\end{align*}
However, the three morphisms of the second type are in the same orbit of the action
of $\Un$ on $\Morph\left(\Lcal_{n}\right)$, whereas the first type
of morphism is in a different $\Un$-orbit.
\end{example}

We have a parallel result to that of
Lemma~\ref{lemma: object component homogeneous}.

\begin{lemma}   \label{lemma: type=component}
The subspace $\morphcomponent{\lambda}{\mu}$ is a homogeneous space; specifically,
\[
\morphcomponent{\lambda}{\mu}
   \cong
   \Un/\left(\UnIsotropyof{\lambda}\cap\UnIsotropyof{\mu}\right).
\]
The connected components of~$\Morph\left(\Lcal_{n}\right)$
are given by the spaces $\morphcomponent{\lambda}{\mu}$ as the type of $\lambda\rightarrow\mu$
ranges over the morphisms of unordered partitions of the integer~$n$.
\end{lemma}

\begin{proof}
We must show that $\Un$ acts transitively on $\morphcomponent{\lambda}{\mu}$. Suppose that
$\typeof{\lambda'\rightarrow\mu'}=\typeof{\lambda\rightarrow\mu}$.
Then $\mu$ and $\mu'$ have the same type, so there exists $u\in\Un$
such that $u\,\mu=\mu'$. It is not necessarily the case that $u \lambda = \lambda'$. However, $u\lambda$ does have the same type as $\lambda'$, and both are refinements
of~$\mu'$. Hence there exists $u'\in\UnIsotropyof{\mu'}$ such that $u' u\lambda=\lambda'$.

To compute the isotropy group of $\lambda\rightarrow\mu$, we recall that there is at most one morphism between any two objects of~$\Lcal_{n}$. Hence $\lambda\rightarrow\mu$ is fixed by $u\in\Un$ if and only if $u$ fixes
both $\lambda$ and $\mu$.
\end{proof}

\section{$\Lcal_{n}$ is simply connected}
\label{section: simple connectivity}

In~\cite{Banff1}, we proved that the nerve of $\Lcal_{3}$ is simply
connected by exhibiting the object space (which has two connected components)
and the morphism space (which has one connected component), and using the Van
Kampen Theorem.  In this section, we use similar methods to show more generally that $\Lcal_{n}$ is simply connected when
$n\geq 3$. Since $\Lcal_{1}$ is empty, and we know
$\Lcal_{2}\cong\reals P^{2}$ \cite[Prop. 2.1]{Banff1}, the following
theorem completes the understanding of the fundamental group of
$\Lcal_{n}$ for all~$n$.

\begin{theorem}  \label{theorem: simply connected}
\simplyconnectedtext
\end{theorem}

Given Theorem~\ref{theorem: simply connected}, the contractibility of
$\Lcal_{n}$ is determined by homology.

\begin{corollary}   \label{corollary: contractible}
\corollarycontractibletext
\end{corollary}

\begin{proof}
The mod~$p$ homology of $\Lcal_{p^{k}}$ is nonzero
\cite[Theorem~4(b)]{Arone-Topology}, so certainly $\Lcal_{p^{k}}$ is not
contractible. If $n$ is not a prime power, then $n>2$, so
$\Lcal_{n}$ is simply connected by
Theorem~\ref{theorem: simply connected}. But
if $n$ is not a power of a prime, then $\Lcal_{n}$ is
acyclic both rationally and at all primes by another result of
Arone (see, for example, \cite[Prop.~9.6]{Arone-Lesh-Crelle}).
The result follows.
\end{proof}

We build on the results of Section~\ref{section: exploration}
to give an elementary approach to proving
Theorem~\ref{theorem: simply connected}.
We begin by considering the fundamental group of each connected
component of~$\Obj\left(\Lcal_{n}\right)$.

\begin{lemma}    \label{lemma: fund group obj component}
Suppose that $\lambda\in\Obj\left(\Lcal_{n}\right)$ has type
$k_{1}\nund_{1}+\dots+k_{j}\nund_{j}$. Then
$\pi_{1}\objcomponent{\lambda}\cong \prod_{i=1}^{j}\Sigma_{k_{i}}$.
\end{lemma}

\begin{proof}
Recall from Lemmas~\ref{lemma: object component homogeneous}
and~\ref{lemma: object isotropy} that
$\objcomponent{\lambda}\cong \Un/\UnIsotropyof{\lambda}$ and
$\UnIsotropyof{\lambda}\cong\prod_{i=1}^{j}\left(U(n_i)\wr\Sigma_{k_i}\right)$
However, if $G$ is a compact Lie group with maximal torus~$T$, then
$\pi_{1}T\rightarrow\pi_{1}G$ is surjective. (See, for example,
Corollary~5.17 in \cite{MimuraToda}.)  The subgroup
$\UnIsotropyof{\lambda}$ contains a maximal torus of~$\Un$, so
$\pi_{1}\UnIsotropyof{\lambda}\rightarrow\pi_{1}\Un$ is an
epimorphism. The lemma then follows from the long exact sequence of homotopy groups associated to
$\UnIsotropyof{\lambda}\rightarrow\Un\rightarrow\Un/\UnIsotropyof{\lambda}$,
because
$\pi_{0}\UnIsotropyof{\lambda}\cong \prod_{i=1}^{j}\Sigma_{k_{i}}$ and
$\Un$ is connected.
\end{proof}

We need an easily-applied criterion
for the Van Kampen Theorem to yield simple connectivity, which is
the purpose of the following algebraic lemma.

\begin{lemma}  \label{lemma: free product zero}
Consider a diagram of groups
\[
A \xleftarrow{\ f\ } C \xrightarrow{\ g\ } B.
\]
Let $K=\ker(C \xrightarrow{\ g\ } B)$ and assume the following conditions.
\begin{enumerate}
\item The normal closure of $\im(K\xrightarrow{\ f\ } A)\subseteq A$ is $A$.
\item The normal closure of $\im(C\xrightarrow{\ g\ } B)\subseteq B$ is $B$.
\end{enumerate}
Then $A \ast_C B$ is the trivial group.
\end{lemma}

\begin{proof}
  We first show that any element of $A$, regarded as an element of
  $A \ast_C B$, is actually trivial.  By definition, elements of the
  free product of $A$ and $B$ that have the form $f(c)g(c)^{-1}$ are
  null in the amalgamated product $A \ast_C B$.   Since the normal closure of $K$ in $A$ is $A$ itself, any element of $A$
  can be written as a product of elements of the form $y=af(k)a^{-1}$
  where $a\in A$ and $k\in K$.
  However, $g(k)=e$, so $y=af(k)a^{-1}=af(k)g(k)^{-1}a^{-1}$, and $y$
  becomes null in $A \ast_C B$.

  Likewise, any element of $B$ can be written as the product of elements
  $z=bg(c)b^{-1}$ where $b\in B$ and $c\in C$. We can write
  $z= bg(c)b^{-1}= bf(c)\left[f(c)^{-1}g(c)\right]b^{-1}$, that is,
  $z= bf(c)b^{-1}$ in the amalgamated product. But there exist
  $k\in K$ and $a\in A$ such that $f(c)=af(k)a^{-1}$. Hence, in the
  amalgamated product $z=b\left(a f(k)a^{-1}\right)b^{-1}$. But since $k \in K$, we know that $f(k)=e$ in the amalgamated product, from which it follows that $z$ also becomes trivial in $A \ast_C B$.
\end{proof}

Let $\skel{i}{\Lcal_{n}}$ denote the $i$-skeleton of the nerve of~$\Lcal_{n}$.
It is sufficient to show that the map
\[
\pi_{1}\skel{1}{\Lcal_{n}}\rightarrow\pi_{1}\skel{2}{\Lcal_{n}}
\]
is trivial, so we begin by analyzing paths in $\skel{1}{\Lcal_{n}}$ in
detail.

Given a decomposition $\lambda\in\Obj\left(\Lcal_{n}\right)$, recall
that $\objcomponent{\typeof{\lambda}}$ is the connected component of
$\Obj\left(\Lcal_{n}\right)$ consisting of all objects with the same
type as~$\lambda$
(Definition~\ref{defn: type objects}~\eqref{item: set with same type} and
Lemma~\ref{lemma: object component homogeneous}).  Likewise,
$\morphcomponent{\lambda}{\mu}$ is the connected component of
$\Morph\left(\Lcal_{n}\right)$ containing morphisms of the same type as~$\lambda\rightarrow\mu$
(Definition~\ref{defn: type morphisms}~\eqref{item: set with same type} and Lemma~\ref{lemma: type=component}). Finally, we write
$\Cyl{\lambda}{\mu}$ for the homotopy pushout of the diagram
\begin{equation}  \label{eq: cylinder}
\xymatrix{
\morphcomponent{\lambda}{\mu}\ar^{\rm{\hspace{1.5em}target\hspace{.5em}}}[r]
       \ar_{\rm{source}}[d] & \objcomponent{\mu}\\
\objcomponent{\lambda} & . }
\end{equation}
The space $\skel{1}{\Lcal_{n}}$ can be written as the (finite) union
of spaces $\Cyl{\lambda}{\mu}$ where $\lambda\rightarrow\mu$ ranges
over a set of representatives of the connected components of
$\Morph\left(\Lcal_{n}\right)$. Note that these cylinders are not
disjoint. For example, the following diagram shows connected
components of $\Obj\left(\Lcal_{4}\right)$, with arrows between those
that are connected by morphisms:
\begin{equation}  \label{eq: L4 diagram}
\xymatrix{
\objcomponent{1\cdot\underline{1}+1\cdot\underline{3}}
        &  & \objcomponent{2\cdot\underline{2}}\\
& \objcomponent{2\cdot\underline{1}+1\cdot\underline{2}}\ar[ul]
    \ar[ur]\\
. &\objcomponent{4\cdot{\underline{1}}}
      \ar@<1ex>[uul]
      \ar[u]
      \ar@<-1ex>[uur]
  }
\end{equation}
Thus $\skel{1}{\Lcal_{4}}$ is the union of five non-degenerate cylinders, three
of which contain the object
component~$\objcomponent{4\cdot\underline{1}}$.

Using the Van Kampen Theorem, the fundamental group of any double
mapping cylinder $\Cyl{\lambda}{\mu}$ can be expressed as a quotient
of the free product of $\pi_{1}\objcomponent{\lambda}$ and
$\pi_{1}\objcomponent{\mu}$.
We illustrate with a special case that turns out to do much of the
work to prove Theorem~\ref{theorem: simply connected}.

\begin{proposition}    \label{prop: tentacle}
Let $\mu$ be any decomposition
containing at least one subspace of dimension greater than one, and
let $\epsilon$ be a refinement of $\mu$ into lines.
Then $\Cyl{\epsilon}{\mu}$ is simply connected.
\end{proposition}

\begin{proof}
Suppose that $\mu$ has type
  $k_{1}\nund_{1}+\dots+k_{j}\nund_{j}$. Then
\[
\UnIsotropyof{\mu}\cong
\left(\Uof{n_1} \wr \Sigma_{k_1}\right) \times\dots\times
\left(\Uof{n_j} \wr \Sigma_{k_j}\right),
\]
and by Lemma~\ref{lemma: fund group obj component} we have
$\pi_1\objcomponent{\mu}\cong\prod_{i=1}^{j}\Sigma_{k_{i}}$.
Likewise
\[
\UnIsotropyof{\epsilon}\cong \Uof{1} \wr \Sigma_{n}
\]
and by Lemma~\ref{lemma: fund group obj component} we have
$\pi_1\objcomponent{\epsilon}\cong\Sigma_n$.
Finally, $\Un$ acts transitively on $\morphcomponent{\epsilon}{\mu}$, with isotropy
group $\UnIsotropyof{\epsilon}\cap\UnIsotropyof{\mu}$
(Lemma~\ref{lemma: type=component}). We can compute
\[
\UnIsotropyof{\epsilon}\cap\UnIsotropyof{\mu}\cong
\left(U(1) \wr \Sigma_{n_1}) \wr \Sigma_{k_1}\right)
      \times \cdots \times
\left(U(1) \wr \Sigma_{n_j}) \wr \Sigma_{k_j}\right),
\]
and as a result, by a similar argument to
Lemma~\ref{lemma: fund group obj component}, we have
\[
\pi_1\morphcomponent{\epsilon}{\mu} \cong
\left(\Sigma_{n_1} \wr \Sigma_{k_1}\right)
      \times \cdots \times
\left( \Sigma_{n_j} \wr \Sigma_{k_j}\right).
\]

Hence applying $\pi_{1}$ to diagram~\eqref{eq: cylinder} gives us
\[\xymatrix{
\left(\Sigma_{n_1} \wr \Sigma_{k_1}\right)
      \times \cdots \times
\left( \Sigma_{n_j} \wr \Sigma_{k_j}\right)
 \ar[r] \ar[d]
&
\Sigma_{k_1}\times \cdots \times \Sigma_{k_j} \\
\Sigma_n & , }
\]
where the top map uses the projection for each of the wreath products,
and the vertical map is inclusion (recall that $n_1k_1+\dots+k_jn_j=n$).
It now follows from Lemma~\ref{lemma: free product zero}
that the fundamental group of $\Cyl{\epsilon}{\mu}$ is trivial.
\end{proof}

If we apply Proposition~\ref{prop: tentacle} to
diagram~\eqref{eq: L4 diagram}, we see that we have simple
connectivity of each of the three cylinders
containing~$\objcomponent{4\cdot\underline{1}}$. However, it is
possible to make homotopically essential loops in
the $1$-skeleton by making circuits of the triangles in the
diagram, so we focus attention on such triangles.

Given a morphism $\lambda\rightarrow\mu$ in~$\Lcal_{n}$,
choose a refinement $\epsilon$ of $\lambda$ into one-dimensional components,
and consider the ``triangle''
\begin{equation}   \label{diagram: triangle}
\xymatrix{
   &\objcomponent{\typeof{\mu}}\\
  \objcomponent{\typeof{\lambda}} \ar[ur]^{\morphcomponent{\lambda}{\mu}}\\
  & \objcomponent{n\cdot{\underline{1}}} \ar[ul]^{\morphcomponent{\epsilon}{\lambda}} \ar[uu]_{\morphcomponent{\epsilon}{\mu}}.
}
\end{equation}

We define
\[
\morphtriangle{\lambda}{\mu}=
\Cyl{\epsilon}{\mu}\cup\Cyl{\epsilon}{\lambda}\cup\Cyl{\lambda}{\mu},
\]
a path-connected space that depends only on the morphism type of $\lambda\rightarrow\mu$. Consider the following paths:
\begin{equation}    \label{eq: defn alpha}
\begin{cases}
\alpha_{1}(t)=\left(\epsilon\rightarrow\lambda,t\right)&
            \mbox{in }\Cyl{\epsilon}{\lambda}\\
\alpha_{2}(t)=\left(\lambda\rightarrow\mu,t\right)&
            \mbox{in }\Cyl{\lambda}{\mu}\\
\alpha_{3}(t)=\left(\epsilon\rightarrow\mu,t\right)&
            \mbox{in }\Cyl{\epsilon}{\mu}
\end{cases}
\end{equation}
Let $\alpha=\alpha_{1}*\alpha_{2}*\alpha_{3}^{-1}$ be the concatenation of the three paths to form a loop in~$\morphtriangle{\lambda}{\mu}$ based at $\epsilon$.

\begin{lemma} \label{lemma: alpha generates fundamental group}
If $\lambda$ contains at least one subspace of dimension greater than one, then
$\pi_{1}\left(\morphtriangle{\lambda}{\mu},\epsilon\right)$ is generated by the loop $\alpha$ given above.
\end{lemma}

\begin{proof}

 Any loop based at $\epsilon$ can be written as a finite concatenation of paths in $\Cyl{\epsilon}{\lambda}$, $\Cyl{\lambda}{\mu}$, and $\Cyl{\epsilon}{\mu}$. Furthermore, path-connectedness of  $\objcomponent{\typeof{\lambda}}$, $ \objcomponent{\typeof{\mu}}$, and $\objcomponent{n\cdot{\underline{1}}}$ means that we can assume that the paths in the concatenation have endpoints at $\epsilon$, $\lambda$, or $\mu$. Hence it is sufficient to analyze paths in the cylinders with those endpoints.

  By Proposition~\ref{prop: tentacle}, $\Cyl{\epsilon}{\lambda}$ is simply connected. Hence any path in $\Cyl{\epsilon}{\lambda}$ beginning at $\epsilon$ and ending at $\lambda$ is path homotopic to~$\alpha_{1}$. Similarly, any path in $\Cyl{\epsilon}{\mu}$ beginning at $\epsilon$ and ending at $\mu$ is path homotopic to $\alpha_{3}$.

  To finish the proof, we need to show that any path in $\Cyl{\lambda}{\mu}$ from $\lambda$ to $\mu$ is path homotopic in $\morphtriangle{\lambda}{\mu}$ to~$\alpha_{2}$. Note that we need the entire triangle. Unlike the previous cases, it is not generally true that a path from $\lambda$ to $\mu$ in $\Cyl{\lambda}{\mu}$ is path homotopic to $\alpha_{2}$ by a path homotopy that stays inside $\Cyl{\lambda}{\mu}$.

  It is sufficient to prove that the inclusion
  $\Cyl{\lambda}{\mu}\subseteq\morphtriangle{\lambda}{\mu}$ induces the trivial map on fundamental groups.
 Observe that by the Van Kampen Theorem,
  $\pi_{1}\Cyl{\lambda}{\mu}$ is a quotient of the free product
  $\pi_{1}\objcomponent{\typeof{\lambda}}
       \ast\pi_{1}\objcomponent{\typeof{\mu}}$.
  However, simple connectivity of $\Cyl{\epsilon}{\lambda}$ means that $\pi_{1}\objcomponent{\typeof{\lambda}}$ maps trivially to
  $\pi_{1}\morphtriangle{\lambda}{\mu}$. Likewise, simple connectivity of
 $\Cyl{\epsilon}{\mu}$
 means that $\pi_{1}\objcomponent{\typeof{\mu}}$ maps trivially to
  $\pi_{1}\morphtriangle{\lambda}{\mu}$.

  The proposition follows, because any loop based at $\epsilon$ is path homotopic to a concatenation of the paths $\alpha_{1}$, $\alpha_{2}$, $\alpha_{3}$, and their inverses.
\end{proof}

Having identified a generator for the potentially nontrivial fundamental group of a triangle, we establish that the generator becomes null in the two-skeleton.
(In fact, the fundamental group of $\morphtriangle{\lambda}{\mu}$ is nontrivial for $n>2$ if and only if $\epsilon$, $\lambda$, and $\mu$ all have different types, but we will not need this fact.)

\begin{proposition}     \label{prop: zero map}
Suppose that $\epsilon\rightarrow\lambda\rightarrow\mu$ are composable morphisms
in~$\Lcal_{n}$, where $\epsilon$ has type~$n\cdot\underline{1}$ and $\lambda$ has at least one subspace of dimension greater than one. Then
$\morphtriangle{\lambda}{\mu}\hookrightarrow\skel{2}{\Lcal_{n}}$ induces
the zero map on fundamental groups.
\end{proposition}

\begin{proof}
By Lemma~\ref{lemma: alpha generates fundamental group},
the fundamental group $\pi_1\morphtriangle{\lambda}{\mu}$ is generated by
the loop $\alpha$ in~\eqref{eq: defn alpha}.
But in $\skel{2}{\Lcal_{n}}$, a null-homotopy for $\alpha$ is provided
by the $2$-simplex corresponding to the composable morphisms
$\epsilon\rightarrow\lambda\rightarrow\mu$, thus proving the proposition.
\end{proof}

By breaking a loop up as a concatenation of loops in triangles, we obtain the desired simple connectivity result.

\begin{proof}[Proof of Theorem~\ref{theorem: simply connected}]
  We can write $\skel{1}{\Lcal_{n}}$
  as the finite union of the path connected spaces
  $\morphtriangle{\lambda}{\mu}$. Therefore any path $\alpha$ based at $\epsilon\in\objcomponent{n\cdot{\underline{1}}}$ can be written as a finite concatenation $\alpha_{1}\ast\dots\ast\alpha_{k}$, where each $\alpha_{i}$ is a path in a triangle $\morphtriangle{\lambda_{i}}{\mu_{i}}$. However, the intersection of neighboring triangles
  $\morphtriangle{\lambda_{i}}{\mu_{i}}\cap\morphtriangle{\lambda_{i+1}}{\mu_{i+1}}$ is path connected and contains~$\objcomponent{n\cdot{\underline{1}}}$, so $\alpha_{i}(1)=\alpha_{i+1}(0)$ can be connected to the basepoint $\epsilon$ by a path $\beta_{i}$ within $\morphtriangle{\lambda_{i}}{\mu_{i}}\cap\morphtriangle{\lambda_{i+1}}{\mu_{i+1}}$. Then $\alpha$ is path homotopic to
\[
  \left(\alpha_{1}\ast\beta_{1}\right)
       \ast\left(\beta_{1}^{-1}\ast\alpha_{2}\ast\beta_{2}\right)
       \ast\dots
       \ast\left(\beta_{k-1}^{-1}\ast\alpha_{k}\right),
\]
which is a concatenation of loops at $\epsilon$ completely contained within a triangles.

  But by Proposition~\ref{prop: zero map},
  any loop in   $\morphtriangle{\lambda}{\mu}$ maps to zero in
  $\pi_1 \skel{2}{\Lcal_{n}}$, which proves the theorem.
\end{proof}

\section{A contractibility criterion for fixed points}
\label{section: normal subgroup condition}

Recall that we call a subgroup of $\Un$ \defining{polytypic} if its action on $\Cn$ is not isotypic.
This section establishes that subgroups $H\subseteq
\Un$ with certain normal, polytypic subgroups have contractible fixed
point sets on $\Lcal_{n}$. The main result is
Proposition~\ref{proposition: polytypic gives contractible} below,
which first appeared as Theorem~4.5 in \cite{Banff1};
its proof is given at the end of the section.
Our goal in
Section~\ref{section: find normal subgroup} will be to find normal
subgroups $J$ satisfying the hypotheses of
Proposition~\ref{proposition: polytypic gives contractible}.

\begin{proposition}
\label{proposition: polytypic gives contractible}
Let $H$ be a subgroup of $\Un$, and suppose $H$ has a normal subgroup
$J$ with the following properties:
\begin{enumerate}
     \item $J$ is polytypic.
     \item For every $\lambda\in \weakfixed{H}$, the action of $J$ on
         $\class(\lambda)$ is not transitive.
\end{enumerate}
Then $\weakfixed{H}$ is contractible.
\end{proposition}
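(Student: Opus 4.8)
The plan is to exhibit a short zigzag of natural transformations connecting the identity functor of $\weakfixed{H}$ to a constant functor; since a natural transformation of functors induces a homotopy between the maps it determines on nerves, this forces $\weakfixed{H}$ to be contractible. The constant value will be the canonical partition $\lambda_{J}$ of $\complexes^{n}$ into the isotypical summands of $\repn{H}$ restricted to $J$. Since $J$ is polytypic, this decomposition has at least two pieces, so $\lambda_{J}$ is a proper nontrivial partition and hence an object of $\Lcal_{n}$; and since $J$ is normal in $H$, conjugation by an element of $H$ permutes the irreducible $J$-representations, hence permutes the $J$-isotypical summands of $\complexes^{n}$, so $\lambda_{J}$ is weakly $H$-fixed. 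In particular $\weakfixed{H}$ is nonempty.

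First I would define a functor $f\colon\weakfixed{H}\to\weakfixed{H}$ carrying a partition $\lambda$ to the coarsening whose classes are the orthogonal sums $\bigoplus_{v\in O}v$, indexed by the orbits $O$ of the $J$-action on $\class(\lambda)$. This action exists because $\lambda$ is weakly $H$-fixed and $J$ is normal in $H$, and for the same reason $H$ permutes the $J$-orbits, so $f(\lambda)$ is again weakly $H$-fixed; the non-transitivity hypothesis is precisely what guarantees there are at least two orbits, so that $f(\lambda)$ is still a proper nontrivial partition and hence an object of $\Lcal_{n}$. By construction every class of $f(\lambda)$ is $J$-invariant, so I would then refine each such class into its $J$-isotypical summands, obtaining a partition $gf(\lambda)$ which again lies in $\weakfixed{H}$, refines $f(\lambda)$, and --- since each of its classes is $J$-isotypical, hence contained in a single $J$-isotypical summand of $\complexes^{n}$ --- also refines $\lambda_{J}$. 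Because $\Lcal_{n}$ is a poset under refinement, each of these refinements is the unique morphism between the partitions involved and the naturality squares commute automatically; this produces natural transformations relating, in turn, $\mathrm{id}$ to $f$, $gf$ to $f$, and $gf$ to $\mathrm{const}_{\lambda_{J}}$, which is the required zigzag.

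The step I expect to be the main obstacle is checking that $f$ and the composite $gf$ are \emph{continuous} functors of the topological category $\Lcal_{n}$: forming an orthogonal sum of subspaces, or an isotypical summand of a representation, is continuous only when the underlying combinatorics does not jump. Concretely, one must verify that the $J$-set structure on $\class(\lambda)$ is locally constant on $\weakfixed{H}$ --- a nearby weakly $H$-fixed partition has classes close to, and hence canonically in bijection with, those of $\lambda$ (the classes of a partition being mutually disjoint away from the origin), and the permutation of classes effected by any fixed element of $J$ cannot change under a small perturbation --- and, similarly, that the isomorphism type of each class as a $J$-representation is locally constant on strongly $J$-fixed partitions. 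Granting this, the orthogonal projections cutting out the classes of $f(\lambda)$ and of $gf(\lambda)$ depend continuously on $\lambda$, so $f$ and $gf$ are continuous; this is exactly the technical content of the proof of \cite{Proc} Theorem~4.5. Everything else --- well-definedness of the constructions from the two hypotheses, functoriality, and naturality of the maps above --- is formal.
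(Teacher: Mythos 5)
Your argument is correct and is essentially the paper's own proof unrolled into a single zigzag: your $f(\lambda)$ is the paper's $\glom{\lambda}{J}$, your $gf(\lambda)$ is $\isorefine{\glom{\lambda}{J}}{J}$, and your $\lambda_J$ is the terminal object $\mu$ of $\isofixed{J}\cap\weakfixed{H}$ used in the paper's argument via Proposition~\ref{proposition: ContractJ}. The continuity issue you flag as the main technical point is treated at the same level of detail in the paper, which likewise defers the routine verification to the earlier reference.
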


To prove Proposition~\ref{proposition: polytypic gives contractible},
we need two constructions that will give natural retractions
between various subcategories of $\Lcal_{n}$.

\begin{definition}\label{orbit_partition_def}
  Let $J \subseteq \Un$ be a subgroup, and let
  $\lambda$ be a weakly $J$-fixed decomposition of $\Cn$.
\begin{enumerate}
\item The decomposition $\glom{\lambda}{J}$ is defined as follows:
  $w\subseteq\Cn$ is a component of $\glom{\lambda}{J}$
  if and only if $w=\Sigma_{j\in J} \,jv$ for some $v\in\class(\lambda)$.
  That is, a component of $\glom{\lambda}{J}$ is formed by
  taking the sum of all components that are in the same orbit of the action of $J$ on the set of components of~$\lambda$.
\item If $\lambda$ is strongly $J$-fixed, let
  $\isorefine{\lambda}{J}$ be the refinement of $\lambda$ obtained by
  breaking each component of $\lambda$ into its canonical
  $J$-isotypical summands.
\end{enumerate}
\end{definition}

A routine check establishes the following properties of the operations
above.

\begin{lemma}\hfill
\begin{enumerate}
\item The decomposition $\glom{\lambda}{J}$ is strongly $J$-fixed, natural in
$\lambda$, and minimal among coarsenings of $\lambda$ that are
strongly $J$-fixed.
\item
The decomposition $\isorefine{\lambda}{J}$ is natural in strongly $J$-fixed
decompositions $\lambda$ and is maximal among refinements of $\lambda$ whose
classes are isotypical representations of $J$.
\end{enumerate}
\end{lemma}

The following two lemmas give criteria for $\glom{\lambda}{J}$ and
$\isorefine{\lambda}{J}$ to be weakly fixed by a supergroup
$H\subseteq\Un$ of~$J$.  Lemma~\ref{lemma: glom} is a straightforward
check, and Lemma~\ref{lemma: isorefine} involves some basic
representation theory.

\begin{lemma} \label{lemma: glom}
Let $J\triangleleft H$, and suppose
  that $\lambda$ is weakly fixed by $J$.  If $h\in H$, then
  $\glom{(h\lambda)}{J}=h\left(\glom{\lambda}{J}\right)$. In particular, if $\lambda$ is weakly fixed by~$H$, then $\glom{\lambda}{J}$ is also weakly fixed by~$H$.
\end{lemma}

\begin{proof}
We show that a component of $h\left(\glom{\lambda}{J}\right)$ is in fact a component of $\glom{(h\lambda)}{J}$.
Let $v\in\class(\lambda)$, and consider the component $w\in\class\left(\glom{\lambda}{J}\right)$ given by $w=\Sigma_{j\in J}\,jv$. We can compute $hw$ as
\begin{align*}
hw                   &=h\left(\Sigma_{j\in J}\,jv\right) \\
                     &= \Sigma_{j\in J}\,\left(hjh^{-1}\right)(hv)\\
                     &= \Sigma_{j'\in J}\ j'(hv).
\end{align*}
Here $j'$ runs over all elements of $J$ because conjugation by $H$ is an automorphism. Therefore $hw$ is a component in $\glom{(h\lambda)}{J}$.
\end{proof}

\begin{lemma} \label{lemma: isorefine}
  Let $J\triangleleft H$, and suppose that $\lambda$ is strongly fixed
  by $J$. If $h\in H$, then
  $\isorefine{(h\lambda)}{J}=h\left(\isorefine{\lambda}{J}\right)$.
  In particular, if $\lambda$ is weakly fixed by~$H$, then $\isorefine{\lambda}{J}$ is also weakly fixed by~$H$.
\end{lemma}

\begin{proof}
  Let $v\in\class(\lambda)$, and let $h\in H$. Because $\lambda$ is strongly $J$-fixed, we know that $v$ is stabilized by~$J$. The subspace $hv$ is a component of~$h\lambda$, and because $J\triangleleft H$, we can check that $hv$ is also stabilized by~$J$. Hence $v\in\class(\lambda)$ and $hv\in\class(h\lambda)$ are both $J$-representations.

  But as a representation of $J$, the subspace $hv$ is conjugate to the
  representation $v$ of $J$. Thus if $w\subseteq v$ is the
  isotypical summand of $v$ for an irreducible representation $\rho$
  of $J$, then $hw$ is the isotypical summand of $hv$ for the
  conjugate of $\rho$ by $h$. We conclude that $h$ maps the canonical
  $J$-isotypical summands of $v$ to the canonical $J$-isotypical
  summands of $hv$, which are components of~$\isorefine{(h\lambda)}{J}$.
  Since this result is true for every $v\in\class(\lambda)$, we find that
  $\isorefine{(h\lambda)}{J}=h\left(\isorefine{\lambda}{J}\right)$.
\end{proof}

For $J\triangleleft H$, the constructions $\lambda\mapsto\glom{\lambda}{J}$ and
$\lambda\mapsto\isorefine{\lambda}{J}$
will allow us to retract $\weakfixed{H}$ onto subcategories
that in many cases have a terminal object and hence contractible nerve.
As a first step, we need to verify continuity of the constructions.
The proofs use an explicit identification of the path components
of~$\weakfixed{J}$. We need the following lemma and its corollary,
which we learned from S. Costenoble.

\begin{lemma}{\cite[Lemma~1.1]{May-Tel-Aviv}}  \label{lemma: May}
Let $G$ be a compact Lie group with closed subgroups $J$ and $K$. Let $p:\alpha\rightarrow\beta$ be a $G$-homotopy between $G$-maps
$G/J\longrightarrow G/K$. Then $p$ factors as the composite of $\alpha$ and
a homotopy $c \colon G/J\times I\rightarrow G/J$ such that
$c\left(eJ,t\right)=c_{t}J$, where $c_{0}=e$ and the values $c_{t}$ specify
a path in the identity component of the centralizer
$\Cen_{G}(J)$ of $J$ in~$G$.
\end{lemma}

\begin{corollary} \label{cor: J fixed path components}
Let $J\subseteq\Un$ be a closed subgroup, and let $\CentIdent{J}$ denote the identity component of the centralizer of $J$ in~$\Un$.
\begin{enumerate}
\item \label{item: path components}
    The path components of $\Obj\weakfixed{J}$ are $\CentIdent{J}$-orbits.
\item \label{item: equivariant sufficient}
    Any $\CentIdent{J}$-equivariant map from $\Obj\weakfixed{J}$ to itself is continuous.
\end{enumerate}
\end{corollary}

\begin{proof}
The space $\Obj\left(\Lcal_{n}\right)$ is topologized as the disjoint union of $\Un$-orbits $\Un/\UnIsotropyof{\lambda}$, so we can apply Lemma~\ref{lemma: May} with $G=\Un$ to each path component of $\Obj\left(\Lcal_{n}\right)$ to obtain the first result. The second result then follows from the fact that $\Obj\weakfixed{J}$ is topologized as the disjoint union of $\CentIdent{J}$-orbits.
\end{proof}

As a consequence, we obtain continuity of $\lambda\mapsto\glom{\lambda}{J}$.
\begin{lemma}   \label{lemma: continuity of glom}
If $J\triangleleft H$, then
the function $\lambda\mapsto\glom{\lambda}{J}$ is continuous
on~$\Obj\weakfixed{H}$.
\end{lemma}

\begin{proof}
Since $\Obj\weakfixed{H}$ is a subspace of $\Obj\weakfixed{J}$, it suffices to show the lemma for $H=J$.
We need only verify that the hypothesis of Corollary~\ref{cor: J fixed path components}~\eqref{item: equivariant sufficient} holds, that is, that the assignment $\lambda\mapsto\glom{\lambda}{J}$ is $\CentIdent{J}$-equivariant.
However, Lemma~\ref{lemma: glom} tells us the stronger condition that $\lambda\mapsto\glom{\lambda}{J}$ is actually equivariant with respect to the normalizer of~$J$, hence necessarily with respect to $\CentIdent{J}$ as well.
%
\end{proof}

We handle isotypical refinement in a similar way.

\begin{lemma}   \label{lemma: continuity of isorefine}
If $J\triangleleft H$, then
the function $\lambda\mapsto\isorefine{\lambda}{J}$ is continuous
on~$\Obj\strongfixed{H}$.
\end{lemma}

\begin{proof}
Since $\Obj\weakfixed{H}$ is a subspace of $\Obj\weakfixed{J}$, it suffices to show the lemma for $H=J$.
First we observe that $\Obj\strongfixed{J}$ and $\Obj\isofixed{J}$ are both stabilized by the action of~$\CentIdent{J}$, and hence both are unions of path components of~$\Obj\weakfixed{J}$. As in the previous lemma, the result follows from
Corollary~\ref{cor: J fixed path components}~\eqref{item: equivariant sufficient}.
\end{proof}

With continuity established, we can present the key retraction result.

\begin{proposition} \label{proposition: ContractJ}
  Let $H$ be a subgroup of $\Un$, and suppose $J\triangleleft H$. Then
  the inclusion functor
\[
\iota_{1}\colon \isofixed{J}\ \cap\ \weakfixed{H}
\longrightarrow \strongfixed{J}\ \cap\ \weakfixed{H}
\]
induces a homotopy equivalence on nerves. If $J$ further has the
property that $\glom{\lambda}{J}$ is proper for every $\lambda\in
\weakfixed{H}$, then the inclusion functor
\[
\iota_{2}\colon\strongfixed{J}\ \cap\ \weakfixed{H}\longrightarrow \weakfixed{H}
\]
induces a homotopy equivalence on nerves.
\end{proposition}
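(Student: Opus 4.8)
The plan is to realize $\iota_{1}$ and $\iota_{2}$ as deformation retractions, with the operations $\isorefine{\lambda}{J}$ and $\glom{\lambda}{J}$ of Definition~\ref{orbit_partition_def} serving as the retracting functors. I will use the standard fact that a natural transformation between two functors $\mathcal{C}\to\mathcal{D}$ induces a homotopy between the corresponding maps of nerves. So in each case it suffices to (a) check that the operation is a well-defined functor landing in the correct subcategory of $\Lcal_{n}$, (b) produce a natural comparison morphism in $\Lcal_{n}$ connecting $\lambda$ with its image under the operation, which then gives a natural transformation between the composite $\iota\circ(\text{retraction})$ and the identity functor, and (c) observe that the other composite $(\text{retraction})\circ\iota$ is literally the identity.

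For $\iota_{2}$, I would send $\lambda\in\weakfixed{H}$ to $\glom{\lambda}{J}$. By the lemma recording the basic properties of these operations, $\glom{\lambda}{J}$ is strongly $J$-fixed and natural in $\lambda$; by Lemma~\ref{lemma: glom} it is weakly $H$-fixed; and by the extra hypothesis on $J$ it is proper, hence a genuine object of $\Lcal_{n}$. Naturality makes $\lambda\mapsto\glom{\lambda}{J}$ a functor $r_{2}\colon\weakfixed{H}\to\strongfixed{J}\cap\weakfixed{H}$. Since $\glom{\lambda}{J}$ is a coarsening of $\lambda$, there is a canonical morphism of $\Lcal_{n}$ connecting $\lambda$ and $\glom{\lambda}{J}$, natural in $\lambda$ because $\Lcal_{n}$ is a poset category, and this is the desired natural transformation between $\iota_{2}\circ r_{2}$ and the identity of $\weakfixed{H}$. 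Finally, if $\lambda$ is already strongly $J$-fixed, then minimality forces $\glom{\lambda}{J}=\lambda$, so $r_{2}\circ\iota_{2}$ is the identity on $\strongfixed{J}\cap\weakfixed{H}$. Hence $\iota_{2}$ is a homotopy equivalence on nerves.

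For $\iota_{1}$, I would send $\lambda\in\strongfixed{J}\cap\weakfixed{H}$ to $\isorefine{\lambda}{J}$. By the same basic-properties lemma this is natural in strongly $J$-fixed $\lambda$ and its classes are $J$-isotypical, so it lies in $\isofixed{J}$; by Lemma~\ref{lemma: isorefine} it is weakly $H$-fixed; and since it refines $\lambda$, each of its classes is a nonzero subspace of a proper class of $\lambda$, so $\isorefine{\lambda}{J}$ is automatically a proper partition. This is precisely why $\iota_{1}$ needs no extra hypothesis, whereas $\iota_{2}$ does (glomming can collapse a partition to the single-block partition). As before, naturality gives a functor $r_{1}$, the refinement relation supplies a natural comparison morphism between $\lambda$ and $\isorefine{\lambda}{J}$ and hence a natural transformation between $\iota_{1}\circ r_{1}$ and the identity of $\strongfixed{J}\cap\weakfixed{H}$, and maximality gives $\isorefine{\lambda}{J}=\lambda$ whenever the classes of $\lambda$ are already isotypical, so $r_{1}\circ\iota_{1}$ is the identity on $\isofixed{J}\cap\weakfixed{H}$. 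Hence $\iota_{1}$ is a homotopy equivalence on nerves.

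I do not expect a serious obstacle here: essentially all the representation-theoretic and combinatorial content has already been absorbed into Lemmas~\ref{lemma: glom} and~\ref{lemma: isorefine} and the naturality/extremality statements for $\glom{\lambda}{J}$ and $\isorefine{\lambda}{J}$. The only points that require a moment of care are getting the directions of the comparison morphisms in $\Lcal_{n}$ right and confirming their naturality (immediate since $\Lcal_{n}$ is a poset category and the operations are natural), and the bookkeeping that $\glom{\lambda}{J}$ may fail to be proper unless the stated hypothesis on $J$ holds.
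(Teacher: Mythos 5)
Your proof is correct and follows essentially the same approach as the paper: in both cases $\isorefine{\lambda}{J}$ and $\glom{\lambda}{J}$ serve as retracting functors, with the refinement/coarsening morphisms providing natural transformations to or from the identity. The added remarks that $\isorefine{\lambda}{J}$ is automatically proper (explaining the asymmetry in hypotheses) and that the composites $r\circ\iota$ are literally the identity are accurate and slightly more explicit than the paper's wording, but are not a different argument.
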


\begin{proof}
  By Lemmas~\ref{lemma: isorefine} and~\ref{lemma: continuity of isorefine},
  a continuous, functorial retraction of $\iota_{1}$ is given by
  $r_{1}\colon \lambda\mapsto\isorefine{\lambda}{J}$. The coarsening
  morphism
  $\isorefine{\lambda}{J}\rightarrow\lambda$
  provides a natural transformation from $\iota_{1}r_{1}$ to the
  identity, establishing the first desired equivalence.

  Similarly, by Lemmas~\ref{lemma: glom} and~\ref{lemma: continuity of glom}, a continuous, functorial retraction of
  $\iota_{2}$ is given by $r_{2}\colon\lambda\mapsto\glom{\lambda}{J}$,
  because by assumption $\glom{\lambda}{J}$ is always a proper decomposition of
  $\Cn$. The coarsening morphism
  $\lambda\rightarrow\glom{\lambda}{J}$ provides a natural
  transformation from the identity to $\iota_{2}r_{2}$, which
  establishes the second desired equivalence.
\end{proof}

\begin{proof}
[Proof of Proposition~\ref{proposition: polytypic gives contractible}]
Let $\mu$ denote the decomposition of $\Cn$ into the
canonical isotypical components of $J$.  If $J$ is polytypic, then
$\mu$ has more than one component and thus is proper, and $\mu$ is
terminal in~$\isofixed{J}$.

We assert that $\mu$ is a terminal object in $\isofixed{J}\ \cap\
\weakfixed{H}$, and we need only establish that $\mu$ is weakly
$H$-fixed.  But $\mu$ is the $J$-isotypical refinement of the
indiscrete decomposition of $\Cn$, i.e., the decomposition
consisting of just $\Cn$ itself. Since the indiscrete
decomposition is certainly $H$-fixed, Lemma~\ref{lemma: isorefine} tells
us that $\mu$ is weakly $H$-fixed.

The result now follows from
Proposition~\ref{proposition: ContractJ}, because the assumption that
the action of $J$ on $\class(\lambda)$ is always intransitive means
that $\glom{\lambda}{J}$ is always proper.
\end{proof}

\section{Finding a normal subgroup}
\label{section: find normal subgroup}

Throughout this section, suppose that $H\subseteq \Un$ is a \pdash toral
subgroup of~$\Un$, and let $\Hbar$ denote the image of $H$ in~$P\Un$. Note that
although $P\Un$ does not act on $\Cn$, it does act on
$\Lcal_{n}$, because the central $S^{1}\subseteq\Un$
stabilizes any subspace of $\Cn$. Hence, for example,
if a decomposition $\lambda$ is weakly $H$-fixed, we can speak of
the action of $\Hbar$ on~$\class(\lambda)$.

Our goal is to prove Theorem~\ref{thm: H elementary abelian}, which
says that if $H$ is problematic, then $\Hbar$ is elementary abelian
and $\character{H}$ is zero except on elements that are
in the central $S^{1}$ of~$\Un$. The plan for the proof is to apply
Proposition~\ref{proposition: polytypic gives contractible} after
locating a relevant normal polytypic subgroup of~$H$.
The following
lemma gives us a starting point, and the proof of
Theorem~\ref{thm: H elementary abelian} appears at the end of the section.

\begin{lemma}    \label{lemma: abelian means polytypic}
If $J\subseteq \Un$ is abelian, then either $J$ is polytypic or
$J\subseteq S^{1}$.
\end{lemma}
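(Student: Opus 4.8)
The plan is to read off everything from the decomposition of $\repn{J}$ into irreducible summands. Since $J$ is abelian and compact (being a closed subgroup of the compact Lie group $\Un$), every complex irreducible representation of $J$ is one-dimensional. Hence we may write $\complexes^{n}=L_{1}\oplus\dots\oplus L_{n}$ as an orthogonal sum of lines, where $J$ acts on $L_{i}$ through a character $\chi_{i}\colon J\to S^{1}$.

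Now recall that $J$ is \emph{isotypic} precisely when all the summands $L_{i}$ are isomorphic as representations of $J$, i.e.\ when $\chi_{1}=\dots=\chi_{n}$. In that case there is a single character $\chi\colon J\to S^{1}$ with $j\cdot v=\chi(j)\,v$ for all $j\in J$ and all $v\in\complexes^{n}$; that is, $J$ acts on $\complexes^{n}$ by scalars, so the image of $J$ under the standard representation lies in the group of scalar matrices, which is exactly the central $S^{1}\subseteq\Un$. Thus $J\subseteq S^{1}$.

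Since by our terminology $J$ is either isotypic or polytypic, the contrapositive of the previous paragraph is the assertion of the lemma: if $J$ is not contained in $S^{1}$, then $J$ is not isotypic, hence polytypic. I do not expect any genuine obstacle here; the only point that deserves an explicit word is the standard fact that irreducible complex representations of a compact abelian group are one-dimensional, after which the argument is pure bookkeeping with characters.
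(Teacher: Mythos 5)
Your proof is correct and is essentially the same as the paper's: decompose $\complexes^{n}$ into one-dimensional $J$-irreducibles (possible since $J$ is compact abelian), observe that isotypic forces a single common character, hence scalar action, hence $J\subseteq S^{1}$. You simply spell out the character bookkeeping in more detail than the paper does.
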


\begin{proof}
Decompose $\Cn$ into a sum of $J$-irreducible representations,
all of which are necessarily one-dimensional because $J$ is abelian.
If $J$ is isotypic, then an element $j\in J$ acts on every
one-dimensional summand by multiplication by the same scalar, so
$j\in S^{1}$.
\end{proof}

Lemma~\ref{lemma: abelian means polytypic} places an immediate
restriction on problematic subgroups.

\begin{lemma} \label{lemma: only rank one torus}
  If $H$ is a problematic \pdash toral subgroup of $\Un$,
  then $\Hbar$ is discrete.
\end{lemma}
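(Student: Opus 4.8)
The plan is to study the identity component $H_{0}$ of $H$. Since $H$ is \pdash toral, $H_{0}$ is a torus; in particular $H_{0}$ is abelian, it is normal in $H$, and $H/H_{0}$ is finite. First I would apply Lemma~\ref{lemma: abelian means polytypic} to $H_{0}$, which yields a dichotomy: either $H_{0}$ is polytypic, or $H_{0}\subseteq S^{1}$. I would treat the two cases separately, the point being that the first one cannot occur when $H$ is problematic.

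Suppose $H_{0}$ is polytypic. I would check that the normal subgroup $J=H_{0}$ satisfies the hypotheses of Proposition~\ref{proposition: polytypic gives contractible}: property (1) holds by assumption, so it remains to verify property (2). Fix $\lambda\in\weakfixed{H}$; then $\lambda$ is in particular weakly fixed by $H_{0}$, so $H_{0}$ permutes the finite set $\class(\lambda)$. Since $H_{0}$ is connected, this permutation action is trivial, and since the partitions making up $\Lcal_{n}$ are proper, $\class(\lambda)$ has at least two elements; hence the action of $H_{0}$ on $\class(\lambda)$ is not transitive. Proposition~\ref{proposition: polytypic gives contractible} then shows that $\weakfixed{H}$ is contractible, contradicting the assumption that $H$ is problematic.

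Therefore $H_{0}\subseteq S^{1}$, and so $H_{0}\subseteq H\cap S^{1}$. Consequently the image $\Hbar\cong H/(H\cap S^{1})$ of $H$ in $P\Un$ is a quotient of the finite group $H/H_{0}$, so $\Hbar$ is finite, and in particular discrete, as claimed. The argument is short; the step requiring the most care is the passage through Lemma~\ref{lemma: abelian means polytypic}, which uses the hypothesis that $H$ is \pdash toral (so that $H_{0}$ is a torus, hence abelian) together with the elementary observation that a connected group acting on the finite set $\class(\lambda)$ must fix each part, so that the presence of at least two parts forces the action to be intransitive.
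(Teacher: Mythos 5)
Your proof is correct and is essentially the paper's argument. The paper constructs $J$ as the identity component of the preimage in $H$ of $\Hbar_{0}$, but this is in fact exactly $H_{0}$, so your choice of $J=H_{0}$ is a cleaner packaging of the same subgroup; both proofs then apply Lemma~\ref{lemma: abelian means polytypic} to this normal torus and invoke Proposition~\ref{proposition: polytypic gives contractible} using the fact that a connected group acts trivially on $\class(\lambda)$.
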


\begin{proof}
  We assert that, without loss of generality, we can assume that $H$
  actually contains $S^{1}$.  Because
  $\weakfixed{H}=\weakfixed{HS^{1}}$, if $H$ is problematic, then
  so is~$HS^{1}$. Likewise, $H$ is $p$-toral if and only if
  $HS^{1}$ is \pdash toral. Further, $H$ and $HS^{1}$ have the same image
  in~$P\Un$.
  Hence we can assume that $S^{1}\subseteq H$, by replacing $H$ by $HS^{1}$
  if necessary.

  The group $\Hbar$ is \pdash toral (e.g., \cite{Banff1}, Lemma~3.3), so
  its identity component, denoted by $\Hbar_{0}$, is a torus. Let $J$
  denote the inverse image of $\Hbar_{0}$ in $H$; thus
  $J\triangleleft H$. Since we have a fibration $S^{1}\rightarrow
  J\rightarrow\Hbar_{0}$, we know that $J$ is connected.  Further, $J$
  is a torus, because it is a connected closed subgroup of the
  identity component of $H$, which is a torus.

  If $\Hbar$ is not discrete, then $J$ is not
  contained in $S^{1}$, and thus $J$ is polytypic by
  Lemma~\ref{lemma: abelian means polytypic}.  Since $J$ is
  connected, its action on the set of equivalence classes of any
  proper decomposition is trivial. The lemma follows from
  Proposition~\ref{proposition: polytypic gives contractible}.
\end{proof}

In terms of progress towards Theorem~\ref{thm: H elementary abelian},
we now know that if $H$ is a problematic \pdash toral subgroup of~$\Un$,
then $\Hbar$ must be a finite \pdash group. The next part of our strategy
is to show that if $\Hbar$ is not an elementary abelian \pdash group,
then $\Hbar$ has a normal subgroup $V$ satisfying the conditions of
the following lemma.

\begin{lemma} \label{lemma: detection mechanism}
  Let $H\subseteq \Un$, and assume there exists
  $V\triangleleft\Hbar$ such that $V\cong\integers/p$
  and $V$ does not act transitively on $\class(\lambda)$ for any
  $\lambda\in\weakfixed{H}$. Then $\weakfixed{H}$ is contractible.
\end{lemma}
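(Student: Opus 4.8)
The plan is to manufacture, out of $V$, a normal subgroup $J\triangleleft H$ meeting the two hypotheses of Proposition~\ref{proposition: polytypic gives contractible}, and then quote that proposition. The obvious candidate is the preimage $J\subseteq H$ of $V$ under the quotient map $H\to\Hbar$. Since $V\triangleleft\Hbar$, this subgroup $J$ is normal in $H$, and its image in $P\Un$ is exactly $V$.

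The crucial observation is that $J$ is abelian. The kernel of $H\to\Hbar$ is $H\cap S^{1}$, which lies in the center of $H$ because $S^{1}$ is the center of~$\Un$. Hence $J$ fits into a central extension
\[
1\longrightarrow H\cap S^{1}\longrightarrow J\longrightarrow V\longrightarrow 1
\]
with cyclic quotient $V\cong\integers/p$. A central extension of a cyclic group is abelian: choosing a lift in $J$ of a generator of $V$, this element commutes with the (central) kernel and with itself, and together with the kernel it generates $J$.

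Now apply Lemma~\ref{lemma: abelian means polytypic} to the abelian subgroup $J$: either $J$ is polytypic, or $J\subseteq S^{1}$. The latter is impossible, since then the image of $J$ in $P\Un$, which is $V$, would be trivial, contradicting $V\cong\integers/p$. So $J$ is polytypic, giving the first hypothesis of Proposition~\ref{proposition: polytypic gives contractible}. For the second hypothesis, recall that the central $S^{1}$ acts trivially on $\Lcal_{n}$, so for any $\lambda\in\weakfixed{H}$ the action of $J$ on $\class(\lambda)$ factors through $J/(H\cap S^{1})\cong V$ and thus coincides with the given action of $V$; by hypothesis this action is not transitive. Proposition~\ref{proposition: polytypic gives contractible} then yields that $\weakfixed{H}$ is contractible.

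The only step that is not purely formal is recognizing that $J$ is abelian via the central-extension-of-a-cyclic-group argument; once this is established, the statement follows immediately from Lemma~\ref{lemma: abelian means polytypic} and Proposition~\ref{proposition: polytypic gives contractible}.
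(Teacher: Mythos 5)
Your proof is correct and follows essentially the same route as the paper: take $J$ to be the preimage of $V$ in $H$, observe the $J$-action on $\class(\lambda)$ factors through $V$, show $J$ is abelian, then apply Lemma~\ref{lemma: abelian means polytypic} and Proposition~\ref{proposition: polytypic gives contractible}. The only difference is cosmetic: the paper justifies abelianness by citing a splitting $J\cong V\times S^{1}$, while you invoke the elementary fact that a central extension with cyclic quotient is abelian, which is both self-contained and slightly more robust (it does not care whether $H\cap S^{1}$ is all of $S^{1}$).
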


\begin{proof}
  Let $J$ be the inverse image of $V$ in $H$. Then $J\triangleleft H$,
  and because the action of $J$ on $\Lcal_{n}$ factors through $V$,
  the action of $J$ on $\class(\lambda)$ is not transitive for any
  $\lambda\in\weakfixed{H}$.  Further, $J$ is abelian, because a
  routine splitting argument shows $J\cong V\times S^{1}$
  (see~\cite{Banff1}, Lemma~3.1).  Therefore $J$ is polytypic by
  Lemma~\ref{lemma: abelian means polytypic}, and the lemma follows
  from Proposition~\ref{proposition: polytypic gives contractible}.
\end{proof}

Before we prove our first theorem, we recall that if $G$ is a finite
\pdash group, then its Frattini subgroup, $\Phi(G)$, is generated by
the commutators $[G,G]$ and the \pdash fold powers $G^p$. It has the
property that $G\rightarrow G/\Phi(G)$ is initial among maps from $G$
to elementary abelian \pdash groups.

We now have all the ingredients we require to prove
Theorem~\ref{thm: H elementary abelian}.
We note that both the statement and the proof
are closely related to those of Proposition~6.1 of~\cite{ADL2}.

\begin{theorem} \label{thm: H elementary abelian}
  \elementaryabeliantheoremtext
\end{theorem}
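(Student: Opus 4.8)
The plan is to exploit the fact that $H$ has noncontractible fixed points on $\Lcal_n$ and to extract structure from the combinatorics of an $H$-fixed partition, combined with linearity. First I would argue that the fixed point space $(\Lcal_n)^H$ being noncontractible forces the existence of a ``minimal'' $H$-fixed object in a suitable sense: an $H$-invariant orthogonal decomposition $\complexes^n=V_1\perp\dots\perp V_r$ that $H$ cannot refine further in an $H$-equivariant way. The point of minimality is that $H$ must act transitively (up to isomorphism) on the summands $V_i$ and that each summand must be $H$-isotypic; otherwise one could build an $H$-equivariant contraction of the fixed point space by coning off to a finer or coarser partition (the standard poset-contractibility argument: if there were a canonical refinement or a canonical join with a fixed point, $(\Lcal_n)^H$ would be contractible). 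This reduces the problem to understanding $H$ acting on $\complexes^n$ with a single isotypic block structure permuted transitively.

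Next I would pass to representation theory. Writing $N\trianglelefteq H$ for the kernel of the permutation action on $\{V_1,\dots,V_r\}$, the restriction of the standard representation to $N$ is a sum of copies of a single irreducible $\complexes N$-module $W$ (by isotypic-ness and transitivity of $H/N$ on the blocks). The key claim to establish is that $W$ must be $1$-dimensional, i.e. $N$ acts through a character. I expect to prove this by yet another contractibility argument: if $\dim_\complexes W>1$, then $W$ itself carries a canonical proper orthogonal $N$-stable subspace only in degenerate cases, but more robustly, one shows that the eigenspace decomposition of any element of the would-be torus, or the isotypic refinement under a larger abelian subgroup, yields a functorial refinement of the fixed partition, contradicting minimality. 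Once $N$ acts through a character on each summand, and $H/N$ permutes the summands, one deduces that $H$ is abelian modulo its action by scalars on each block — more precisely, that the image of $H$ in $PU(n)$ is abelian. Combined with the fact that $H$ is $p$-toral, the identity component (a torus) must then be central, so $H$ maps to a finite abelian $p$-group in $PU(n)$; that $H$ is in fact elementary abelian projectively follows because any nontrivial $p$-th power relation would again produce extra eigenspace structure refining the partition. This gives part~\eqref{item: elem abelian}.

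For part~\eqref{item: character}, I would use the structure just obtained. Since $H$ is projective elementary abelian and acts with the transitive-blocks-of-characters structure, the standard representation $\complexes^n$ is induced from a character of a subgroup of index $r$ (the block stabilizer), and $H\cap S^1$ acts by scalars, so $r=n/|{\text{something}}|$ works out so that the $H$-orbit of blocks has size $n$ over the scalar part — concretely, one checks that the permutation action of $H$ on the $n$-dimensional space, away from the central $S^1$, is free, which is exactly the content of the character vanishing off $S^1$. For $h\in H\setminus S^1$, the matrix of $h$ is (conjugate to) a scalar times a permutation matrix with no fixed coordinate blocks of the right kind, hence $\tr(h)=0$; for $h\in H\cap S^1$, $h$ is a scalar $\lambda$ and $\character{H}(h)=n\lambda=nh$ in the notation of the statement.

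The main obstacle I anticipate is the step forcing $\dim_\complexes W=1$, i.e.\ ruling out higher-dimensional irreducible constituents on the kernel $N$: the contractibility-of-fixed-points input has to be leveraged very carefully there, since a priori $N$ could be a nonabelian $p$-group (e.g.\ an extraspecial group) acting irreducibly in dimension $>1$, and one must show such an action always admits a canonical equivariant refinement of any fixed partition — presumably by exhibiting a natural proper orthogonal decomposition coming from the center of $N$ or from a maximal abelian normal subgroup, and checking it is genuinely functorial on $(\Lcal_n)^H$ so that it contracts the fixed space.
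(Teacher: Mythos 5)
Your proposal rests on a ``minimal fixed partition'' framework that is quite different from the paper's argument and that leaves the hardest steps unproved. You assert that noncontractibility of $\weakfixed{H}$ forces a minimal $H$-fixed partition whose blocks are $H$-isotypic and permuted transitively, invoking ``the standard poset-contractibility argument''; but $\weakfixed{H}$ is a topological poset with no canonical minimal object, and the assertion that some functorial refinement or coarsening would contract the fixed set is exactly what requires proof (one must also check the coarsening stays proper). More importantly, the step you flag yourself --- ruling out a nonabelian $N\triangleleft H$ acting irreducibly in dimension $>1$ on a block --- is genuinely open in your sketch and not clearly forcible by refinement arguments alone. Your claim that the torus in $H$ must be central once $\Hbar$ is known to be abelian is also asserted rather than proved.

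The paper sidesteps all of this by arguing contrapositively with two pieces of machinery. First, Proposition~\ref{proposition: polytypic gives contractible} (proved via the functorial retractions $\lambda\mapsto\glom{\lambda}{J}$ and $\lambda\mapsto\isorefine{\lambda}{J}$ of Proposition~\ref{proposition: ContractJ}) shows that if $H$ has a normal polytypic subgroup $J$ whose action on $\class(\lambda)$ is intransitive for every $\lambda\in\weakfixed{H}$, then $\weakfixed{H}$ is contractible; this makes rigorous the ``cone off to a finer/coarser partition'' idea without ever producing a minimal object. Second, one chooses $J$ cleverly. For discreteness of $\Hbar$ (Lemma~\ref{lemma: only rank one torus}), $J$ is a connected normal torus in $H$ not contained in $S^1$: connectedness forces trivial action on the finite set $\class(\lambda)$, and Lemma~\ref{lemma: abelian means polytypic} gives polytypicity. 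For elementary abelian-ness, one picks $V\cong\integers/p$ inside $\ker(\Hbar\to\Hbar/p)\cap Z(\Hbar)$ (nontrivial whenever $\Hbar$ is a finite $p$-group that is not elementary abelian) and applies Lemma~\ref{lemma: detection mechanism}: if $V$ acted transitively on $\class(\lambda)$ then $|\class(\lambda)|=p$ and the map $\Hbar\to\Sigma_p$ would factor through $\Hbar/p$, contradicting $V\subseteq\ker(\Hbar\to\Hbar/p)$. This eliminates the nonabelian-$N$ problem entirely, with no analysis of irreducible constituents. Your character argument has the right idea (a fixed-point-free permutation of a compatible basis has trace zero), but it, too, needs the detection mechanism to produce a $\lambda$ on which $\langle\bar h\rangle\cong\integers/p$ acts transitively, rather than the minimal-partition structure you posit.
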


\begin{proof}
For~\eqref{item: elem abelian},
  we must show that $\Hbar$ is an elementary abelian \pdash group.
  By Lemma~\ref{lemma: only rank one torus}, we know that $\Hbar$ is a
  finite \pdash group.  If $\Hbar$ is not elementary abelian, then the
  kernel of $\Hbar\rightarrow\Hbarmodp$ is a nontrivial normal
  \pdash subgroup of $\Hbar$, and thus has nontrivial intersection with
  the center of $\Hbar$.  Choose $V\cong\integers/p$ with
\[
V\subseteq
    \ker\left[\,\Hbar\rightarrow\,\Hbarmodp\right]
           \cap Z\left(\Hbar\right).
\]

We assert that $V$ satisfies the conditions of
Lemma~\ref{lemma: detection mechanism}. Certainly
$V\triangleleft\Hbar$, because $V$ is contained in the center
of~$\Hbar$.  Suppose that there exists $\lambda\in\weakfixed{H}$ such
that $V$ acts transitively on $\class(\lambda)$. The set
$\class(\lambda)$ must then have exactly $p$ elements. The action of
$\Hbar$ on $\class(\lambda)$ induces a map
$\Hbar\rightarrow\Sigma_{p}$, and since $\Hbar$ is a \pdash group, the
image of this map must lie in a Sylow \pdash subgroup of~$\Sigma_{p}$.
But then the map $\Hbar\rightarrow\Sigma_{p}$ factors as
\[
\Hbar\rightarrow\Hbarmodp\rightarrow\integers/p\hookrightarrow\Sigma_{p},
\]
with $V\subseteq\Hbar$ mapping nontrivially to
$\integers/p\subseteq\Sigma_{p}$.  We have thus contradicted the
assumption that $V\subseteq\ker\left[\Hbar\rightarrow\Hbarmodp\right]$.
We conclude that $\Hbar$ is, in fact, an elementary abelian \pdash
group, and so \eqref{item: elem abelian} is proved.

For~\eqref{item: character}, first note that if $h\in S^{1}$, then its
matrix representation in $\Un$ is $h\Id$, hence
$\character{H}(h)=\tr(h\Id)=nh$.  Suppose that $\Hbar$ is an elementary abelian
\pdash group, and consider an arbitrary element $h\in H$ such that $h\not\in S^{1}$.  The image of
$h$ in $\Hbar$ generates a subgroup $V\cong\integers/p\subseteq\Hbar$,
and $V$ is a candidate for applying
Lemma~\ref{lemma: detection mechanism}. Since $\weakfixed{H}$ is
not contractible, there must be a decomposition $\lambda\in\weakfixed{H}$
such that $V$ acts transitively on~$\class(\lambda)$.  The action of
$V$ on $\class(\lambda)$ is necessarily free because
$V\cong\integers/p$, so a basis for $\Cn$ can be
constructed that is invariant under $V$ and consists of bases for the
subspaces in $\class(\lambda)$. This action represents $h$ as a fixed-point
free permutation of a basis of~$\Cn$. Hence $\character{H}(h)=0$.
\end{proof}

\section{The subgroups $\Gamma_{k}$
       of $\Upk$}
\label{section: Gamma_k}

Theorem~\ref{thm: H elementary abelian} tells us that if $H$ is a
problematic \pdash toral subgroup of $\Un$, then $H$ is a projective elementary
abelian \pdash group and the character of $H$ is zero away from the
center of $\Un$.  In fact, there are well-known subgroups of $\Un$
that satisfy these conditions, namely the subgroups
$\Gamma_{k}\subseteq \Upk$ that arise in, for example,
\cite{Griess}, \cite{JMO}, \cite{Oliver-p-stubborn},
\cite{Arone-Topology}, ~\cite{Arone-Lesh-Crelle}, \cite{AGMV}, and others. In
this section, we review background on the groups~$\Gamma_{k}$.

We begin with the discrete analogue of~$\Gamma_{k}$. Let $n=p^{k}$ and
choose an identification of the elements of $(\integers/p)^{k}$ with
the set $\{1,...,n\}$. The action of $(\integers/p)^{k}$ on itself by
translation identifies $(\integers/p)^{k}$ as a transitive elementary
abelian \pdash subgroup of $\Sigma_{p^{k}}$, denoted by~$\Delta_{k}$.  Up to
conjugacy, $\Delta_{k}$ is the unique transitive elementary abelian
\pdash subgroup of $\Sigma_{p^{k}}$. Note that every nonidentity element of
$\Delta_{k}$ acts without fixed points. The embedding
\[
\Delta_{k}\hookrightarrow\Sigma_{p^{k}}\hookrightarrow\Upk
\]
given by permuting the standard basis elements is the regular
representation of $\Delta_{k}$, and has character $\character{\Delta_{k}}=0$
except at the identity.

In the unitary context, the projective elementary abelian \pdash subgroup
$\Gamma_{k}\subseteq \Upk$ is generated by the central
$S^{1}\subseteq\Upk$ and two different embeddings of $\Delta_{k}$ in
$\Upk$, which we denote by $\Acal_{k}$ and $\Bcal_{k}$ and describe
momentarily. Just as $\Delta_{k}$ is the unique (up to conjugacy)
elementary abelian \pdash subgroup of $\Sigma_{p^k}$ with transitive
action, it turns out that $\Gamma_{k}$ is the unique (up to conjugacy)
projective elementary abelian \pdash subgroup of $\Upk$ containing the
central $S^{1}$ and having irreducible action (see, for example,
\cite{Zolotykh}). For the explicit description of $\Gamma_{k}$, we
follow~\cite{Oliver-p-stubborn}.

The subgroup $\Bcal_{k}\cong\Delta_{k}$ of $\Upk$ is given as follows.
Consider $\integers/p\subseteq U(p)$ acting by the regular representation,
and let $\Bcal_{k}$ be the group $(\integers/p)^{k}$ acting on
the $k$-fold tensor power $\left(\complexes^{p}\right)^{\otimes k}$.
(This action is, in fact, the regular representation of
$\Delta_{k}\cong (\integers/p)^{k}$.)
Explicitly, for any $r = 0,1,
\ldots, k-1$, let $\sigma_r \in \Sigma_{p^k}$ denote the permutation
defined by
\[
\sigma_r (i) =
\begin{cases}
i + p^r      &\mbox{if } i \equiv 1, \ldots, (p-1)p^r \qquad\quad\mod p^{r+1}, \\
i - (p-1)p^r & \mbox{if } i \equiv (p-1)p^r+1, \ldots, p^{r+1} \mod p^{r+1}.
\end{cases}
\]
For each $r$, let $B_r \in\Upk$ be the corresponding permutation
matrix,
\[
\left(B_r\right)_{ij} =
\begin{cases} 1 &\mbox{if } \sigma_r(i) = j\\ 0 &\mbox{if }
\sigma_r(i) \neq j.
\end{cases}
\]
For later purposes, we record the following lemma.

\begin{lemma}   \label{lemma: character B_k zero}
The character $\character{\Bcal_{k}}$ is zero except at the identity.
\end{lemma}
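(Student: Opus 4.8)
The plan is to observe that this lemma is essentially a restatement of the standard fact that the regular representation of a finite group has character supported at the identity, combined with the freeness of the translation action of $\Delta_{k}$.

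First I would recall that, by construction, $\Bcal_{k}$ is the image of the homomorphism $\Delta_{k}\hookrightarrow\Sigma_{p^{k}}\hookrightarrow\Upk$ sending a permutation to its permutation matrix on the standard basis $e_{1},\dots,e_{p^{k}}$ of $\complexes^{p^{k}}$. For a permutation matrix, the trace equals the number of diagonal entries equal to $1$, which is exactly the number of fixed points of the underlying permutation. Hence $\character{\Bcal_{k}}(g)$ equals the number of indices $i\in\{1,\dots,p^{k}\}$ fixed by the permutation corresponding to $g$.

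Next I would invoke the description of $\Delta_{k}$ as $(\integers/p)^{k}$ acting on the set $\{1,\dots,p^{k}\}\cong(\integers/p)^{k}$ by translation. This action is free, since translation by a nonzero vector has no fixed points; concretely, one reads off from the defining formula that $\sigma_{r}$ is translation by $p^{r}$ within each block of size $p^{r+1}$, so the $\sigma_{r}$ generate precisely the digit-wise translation group, every nonidentity element of which moves every index. Therefore, for $g\neq e$ the corresponding permutation is fixed-point free and $\character{\Bcal_{k}}(g)=0$, while at the identity the character is $p^{k}$.

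There is no genuine obstacle here: the only point needing a line of verification is that the $\sigma_{r}$ generate a freely acting (translation) subgroup, and this is immediate from their definition and was already recorded in the setup, where $\Bcal_{k}\cong\Delta_{k}$ is identified with the regular representation of $\Delta_{k}$.
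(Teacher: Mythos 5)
Your proof is correct and is essentially the paper's argument: the paper's proof simply notes that every nonidentity element of $\Bcal_{k}$ is a fixed-point free permutation matrix, hence has zero trace, which is exactly the content of your freeness-of-translation observation.
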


\begin{proof}
  Every nonidentity element of $\Bcal_{k}$ acts by a fixed-point free
  permutation of the standard basis of $\Cpk$, and so
  has only zeroes on the diagonal.
\end{proof}

Our goal is to define $\Gamma_{k}$ with irreducible action on
$\Cpk$, but $\Bcal_{k}$ alone does not act
irreducibly: being abelian, the subgroup $\Bcal_{k}$ has only
one-dimensional irreducibles. In fact, since $\Bcal_{k}$ is acting on
$\Cpk$ by the regular representation, each of its
$p^{k}$ irreducible representations is present exactly once. The role
of the other subgroup $\Acal_{k}\cong\Delta_{k}\subseteq\Gamma_{k}$ is
to permute the irreducible representations of $\Bcal_{k}$.  To be
specific, let $\zeta = e^{2 \pi i/p}$, and consider
$\integers/p\subseteq \Upp$ generated by the diagonal matrix with
entries $1,\zeta,\zeta^2,...,\zeta^{p-1}$. Then
$\Acal_{k}\subseteq\Upk$ is the group $\left(\integers/p\right)^{k}$
acting on the $k$-fold tensor power
$\left(\complexes^{p}\right)^{\otimes k}$.  Explicitly, for $r = 0,
\ldots, k-1$ define $A_{r}\in\Upk$ by
\[
(A_r)_{ij} =
\begin{cases}
     \zeta^{\left[ (i-1)/p^r\right]} & \mbox{if } i=j \\
     0                    &\mbox{if }  i\neq j
\end{cases}
\]
where $\left[\,\whatever\,\right]$ denotes the greatest integer function.
The matrices $A_{0}$,...,$A_{k-1}$ commute, are of order~$p$,
and generate a rank~$k$ elementary abelian \pdash group~$\Acal_{k}$.

\begin{lemma}  \label{lemma: character A_k zero}
The character $\character{\Acal_{k}}$ is zero except at the identity.
\end{lemma}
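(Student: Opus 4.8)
The plan is to compute $\character{\Acal_{k}}$ directly, using that every element of $\Acal_{k}$ is diagonal. Since $\Acal_{k}$ is generated by the commuting order-$p$ matrices $A_{0},\dots,A_{k-1}$, an arbitrary element is $g=A_{0}^{a_{0}}\cdots A_{k-1}^{a_{k-1}}$ with $a_{r}\in\{0,\dots,p-1\}$, and by the defining formula for the $A_{r}$ its $i$-th diagonal entry is $\prod_{r=0}^{k-1}\zeta^{\,a_{r}\left[(i-1)/p^{r}\right]}$. Hence
\[
\character{\Acal_{k}}(g)=\sum_{i=1}^{p^{k}}\ \prod_{r=0}^{k-1}\zeta^{\,a_{r}\left[(i-1)/p^{r}\right]}.
\]

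Next I would simplify the exponent using base-$p$ expansions. Writing $i-1=\sum_{r=0}^{k-1}d_{r}p^{r}$ with digits $d_{r}\in\{0,\dots,p-1\}$, one has $\left[(i-1)/p^{r}\right]\equiv d_{r}\pmod p$, because in $\sum_{s}d_{s}p^{\,s-r}$ the terms with $s>r$ are divisible by $p$ while those with $s<r$ together sum to less than $1$. Since $\zeta^{p}=1$, this gives $\prod_{r}\zeta^{\,a_{r}\left[(i-1)/p^{r}\right]}=\prod_{r=0}^{k-1}\zeta^{\,a_{r}d_{r}}$. As $i$ ranges over $1,\dots,p^{k}$ the digit vector $(d_{0},\dots,d_{k-1})$ ranges bijectively over $(\integers/p)^{k}$, so the sum factors:
\[
\character{\Acal_{k}}(g)=\sum_{(d_{0},\dots,d_{k-1})}\ \prod_{r=0}^{k-1}\zeta^{\,a_{r}d_{r}}
   =\prod_{r=0}^{k-1}\left(\,\sum_{d=0}^{p-1}\zeta^{\,a_{r}d}\,\right).
\]
Each inner factor is a sum of $p$-th roots of unity, equal to $p$ if $a_{r}\equiv 0\bmod p$ and to $0$ otherwise. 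Therefore $\character{\Acal_{k}}(g)$ equals $p^{k}$ when $g$ is the identity and $0$ otherwise, which is the assertion.

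I would add the remark that this computation in fact identifies the action of $\Acal_{k}$ on $\complexes^{p^{k}}$ as its regular representation: the character of $\Acal_{k}$ attached to the basis vector $e_{i}$ is $(a_{0},\dots,a_{k-1})\mapsto\prod_{r}\zeta^{\,a_{r}d_{r}}$, and distinct $i$ give distinct digit vectors and hence distinct characters of $\Acal_{k}$, so each of the $p^{k}$ irreducible characters occurs exactly once, exactly as for $\Bcal_{k}$ in Lemma~\ref{lemma: character B_k zero}. The only real work here is the bookkeeping with the greatest-integer functions; once the exponent is reduced mod $p$, the vanishing is just orthogonality of the characters of $\integers/p$. (Alternatively, one could invoke the standard fact that $\Acal_{k}$ and $\Bcal_{k}$ are conjugate in $\Upk$ via the discrete Fourier transform and then quote Lemma~\ref{lemma: character B_k zero}, but the direct computation above is shorter and self-contained.)
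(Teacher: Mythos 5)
Your proof is correct and takes essentially the same approach as the paper: both reduce the claim to the fact that $\character{\Acal_{k}}$ factors as a product of $k$ characters of $\integers/p\subseteq\Upp$, each of which is a sum of $p$-th roots of unity vanishing off the identity. The paper obtains that factorization immediately from the tensor-product structure $\left(\complexes^{p}\right)^{\otimes k}$, while you re-derive it by the base-$p$ digit bookkeeping on the diagonal entries of $A_{0}^{a_0}\cdots A_{k-1}^{a_{k-1}}$; the underlying computation is the same.
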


\begin{proof}
  The character of $\integers/p\subseteq \Upp$ generated by the
  diagonal matrix with entries $1,\zeta,\zeta^2,...,\zeta^{p-1}$
  is zero away from the identity by direct computation, because
\[
1+\zeta+\zeta^2+...+\zeta^{p-1}=\frac{\zeta^{p}-1}{\zeta-1}=0.
\]
  The same is true for $\Acal_{k}$, because the character is obtained
  by multiplying together the characters of the individual factors.
\end{proof}

Since the characters $\character{\Acal_{k}}$ and $\character{\Bcal_{k}}$
are the same, we obtain the following corollary.

\begin{corollary}
The subgroups $\Acal_{k}$ and $\Bcal_{k}$ are conjugate in $\Upk$.
\end{corollary}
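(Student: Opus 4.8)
The plan is to deduce conjugacy directly from the equality of characters, using the elementary fact that a representation of a finite group is determined up to isomorphism by its character, together with the fact that an isomorphism between \emph{unitary} representations can always be realized by a unitary matrix.

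First I would fix abstract isomorphisms $(\integers/p)^{k}\xrightarrow{\ \sim\ }\Acal_{k}$ and $(\integers/p)^{k}\xrightarrow{\ \sim\ }\Bcal_{k}$; composing each with the inclusion into $\Upk$ yields two $p^{k}$-dimensional unitary representations $\repn{\Acal_{k}}$ and $\repn{\Bcal_{k}}$ of the group $(\integers/p)^{k}$. By Lemmas~\ref{lemma: character A_k zero} and~\ref{lemma: character B_k zero}, both characters vanish away from the identity, and at the identity each equals the dimension $p^{k}$; hence $\character{\Acal_{k}}=\character{\Bcal_{k}}$ (and this is in fact the character of the regular representation of $(\integers/p)^{k}$). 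Since the character of a complex representation of a finite group determines it up to isomorphism, there is an invertible intertwiner $T\in\GL\left(\complexes^{p^{k}}\right)$ with $T\,\repn{\Acal_{k}}(x)\,T^{-1}=\repn{\Bcal_{k}}(x)$ for all $x\in(\integers/p)^{k}$.

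It then remains to upgrade $T$ to a unitary matrix. Taking adjoints in the intertwining relation and using that the representations are unitary shows that $\repn{\Acal_{k}}(x)$ commutes with $T^{*}T$ for every $x$, hence also with the positive square root $|T|=(T^{*}T)^{1/2}$ and with $|T|^{-1}$. Writing the polar decomposition $T=u|T|$ with $u$ unitary, one checks that $u\,\repn{\Acal_{k}}(x)\,u^{-1}=T\,\repn{\Acal_{k}}(x)\,T^{-1}=\repn{\Bcal_{k}}(x)$ for all $x$. Since the image of $\repn{\Acal_{k}}$ is exactly $\Acal_{k}$ and the image of $\repn{\Bcal_{k}}$ is exactly $\Bcal_{k}$, the element $u\in\Upk$ conjugates $\Acal_{k}$ onto $\Bcal_{k}$, as desired.

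There is no serious obstacle here: the argument is entirely standard representation theory, and the only point requiring a moment's care is the passage from an arbitrary intertwiner to a unitary one, handled by the polar-decomposition step above.
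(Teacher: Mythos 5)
Your proof is correct and takes the same approach as the paper, which simply observes that $\character{\Acal_{k}}=\character{\Bcal_{k}}$ and invokes the standard fact that isomorphic finite subgroups of $\Upk$ with equal characters are conjugate in $\Upk$. You have merely spelled out the details the paper leaves implicit, including the polar-decomposition step upgrading a $\GL$-intertwiner to a unitary one.
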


Although $\Acal_{k}$ and $\Bcal_{k}$ do
not quite commute with each other in~$\Upk$, the commutator
relations are simple and follow from examining the actions of
$\Bcal_{k}\cong(\integers/p)^{k}$ and
$\Acal_{k}\cong(\integers/p)^{k}$ on
$\left(\complexes^{p}\right)^{\otimes k}$.  If $r\neq s$, then $A_r$
and $B_s$ are acting on different tensor factors in
$\left(\complexes^{p}\right)^{\otimes k}$, and hence they commute.
The commutator of $A_{r}$ and $B_{r}$, which both act on the
$r$th tensor factor of $\complexes^{p}$, can be computed by an explicit
computation in $U(p)$. As a result, we obtain the following relations:
\begin{align}
\label{eq: Gamma_k_commutator_equations}
[A_r,A_s] &= \Id = [B_r,B_s], & \text{for all } r,s \nonumber\\
[A_r,B_s] &= \Id,             & \text{for all } r\neq s\\
[B_r,A_r] &= \zeta \Id,       & \text{for all } r\nonumber.
\end{align}

\begin{definition}
The subgroup $\Gamma_{k}\subseteq\Upk$ is generated by the subgroups
$\Acal_{k}$, $\Bcal_{k}$, and the central $S^{1}\subseteq\Upk$.
\end{definition}

\begin{lemma}
\label{lemma: Gamma_k extension}
There is a short exact sequence
\begin{equation}
1\rightarrow S^{1}\rightarrow\Gamma_{k}
                  \rightarrow \left(\Delta_{k}\times\Delta_{k}\right)
                  \rightarrow 1.
\end{equation}
where $S^{1}$ is the center of $\Upk$.
\end{lemma}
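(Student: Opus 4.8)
The plan is to exhibit $\Gamma_k$ as an extension of the abelian group $\Delta_k \times \Delta_k$ by the central circle $S^1 \subseteq \Upk$. First I would observe that $\Acal_k$ and $\Bcal_k$ are each isomorphic to $\Delta_k \cong (\integers/p)^k$ by construction, and that the quotient map $q\colon \Gamma_k \to \Gamma_k/S^1 = P\Gamma_k$ restricted to $\Acal_k$ (resp.\ $\Bcal_k$) is injective, since $\Acal_k \cap S^1 = \{\Id\}$ and likewise for $\Bcal_k$ (each nonidentity element of $\Acal_k$ or $\Bcal_k$ has zero trace by Lemmas~\ref{lemma: character A_k zero} and~\ref{lemma: character B_k zero}, while nonidentity scalars have trace $p^k h \neq 0$). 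So $q(\Acal_k)$ and $q(\Bcal_k)$ are each copies of $\Delta_k$ inside $P\Gamma_k$.

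Next I would use the commutator relations~\eqref{eq: Gamma_k_commutator_equations}. Since $\Gamma_k$ is generated by $\Acal_k$, $\Bcal_k$, and $S^1$, and all commutators among the generators lie in $S^1$ (indeed in $\langle \zeta\Id\rangle$), the subgroup $S^1$ contains the commutator subgroup of $\Gamma_k$; hence $P\Gamma_k = \Gamma_k/S^1$ is abelian. Moreover every element of $\Gamma_k$ can be written as $s\cdot a\cdot b$ with $s \in S^1$, $a \in \Acal_k$, $b \in \Bcal_k$ (again using that commutators are central, one can normal-order any word in the generators), so $P\Gamma_k$ is generated by the images of $\Acal_k$ and $\Bcal_k$. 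It remains to check that $q(\Acal_k) \cap q(\Bcal_k) = \{\Id\}$, i.e.\ that $\Acal_k \cap (\Bcal_k \cdot S^1) = \{\Id\}$: an element in this intersection is simultaneously diagonal (lying in $\Acal_k$) and a scalar multiple of a permutation matrix (lying in $\Bcal_k \cdot S^1$); the only diagonal scalar multiples of permutation matrices are scalar matrices, so such an element lies in $\Acal_k \cap S^1 = \{\Id\}$. Therefore $P\Gamma_k \cong q(\Acal_k) \times q(\Bcal_k) \cong \Delta_k \times \Delta_k$, giving the short exact sequence $1 \to S^1 \to \Gamma_k \to \Delta_k \times \Delta_k \to 1$. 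Finally, $S^1$ is the full center of $\Upk$ by definition, and it is central in $\Gamma_k$, completing the identification.

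The main obstacle, such as it is, is the bookkeeping needed to justify the normal-ordering claim — that every element of $\Gamma_k$ has the form $s a b$ and that the product structure on cosets is exactly that of $\Delta_k \times \Delta_k$ — but this follows formally from the fact that all commutators of generators are central (so $\Gamma_k$ is nilpotent of class $2$ with $[\Gamma_k, \Gamma_k] \subseteq S^1$), together with the counting of orders: $|\Acal_k| = |\Bcal_k| = p^k$ and the intersection computations above force $\Gamma_k/S^1$ to have order exactly $p^{2k}$ and to be elementary abelian. No delicate estimate or deep input is required; everything reduces to the explicit matrix descriptions and the relations~\eqref{eq: Gamma_k_commutator_equations} already recorded.
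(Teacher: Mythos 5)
Your proof is correct and follows essentially the same route as the paper's (necessarily terse) argument: both identify $\Acal_k$ and $\Bcal_k$ as lifting the two $\Delta_k$ factors of the quotient and invoke the commutator relations~\eqref{eq: Gamma_k_commutator_equations} to show that $[\Gamma_k,\Gamma_k]\subseteq S^1$. You supply the supporting details the paper leaves implicit — the trace argument for $\Acal_k\cap S^1=\{\Id\}=\Bcal_k\cap S^1$, the normal-ordering observation, and the diagonal-versus-permutation-matrix check that $q(\Acal_k)\cap q(\Bcal_k)$ is trivial — all of which are sound.
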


\begin{proof}
  The subgroups $\Acal_{k}$ and $\Bcal_{k}$ can be taken as the preimages of the
  two copies of $\Delta_{k}$ in $\Gamma_{k}$.  The commutator
  relations \eqref{eq: Gamma_k_commutator_equations} show that
  $\Acal_{k}$ and $\Bcal_{k}$ do not generate any noncentral elements.
\end{proof}

\begin{remark*}
When $k=0$ we have $\Gamma_{0}=S^{1}\subseteq\Uof{1}$, and $\Delta_{0}$
is trivial, so Lemma~\ref{lemma: Gamma_k extension} is true even
for $k=0$.
\end{remark*}

For later purposes, we record the following lemma.

\begin{lemma}  \label{lemma: Gamma_k subgroups}
The subgroup $\Gamma_{k}\subseteq\Upk$ contains subgroups isomorphic
to $\Gamma_{s}\times\Delta_{t}$ for all nonnegative integers
$s$ and $t$ such that $s+t\leq k$.
\end{lemma}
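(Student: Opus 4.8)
The plan is to build the subgroup $\Gamma_{s}\times\Delta_{t}$ inside $\Gamma_{k}$ by exploiting the tensor-product description of $\Acal_{k}$ and the ``block-permutation'' description of $\Bcal_{k}$ relative to a factorization $p^{k}=p^{s}\cdot p^{t}$. Write $\complexes^{p^{k}}\cong\complexes^{p^{s}}\otimes\complexes^{p^{t}}$, so that $\Upk$ contains $\Uof{p^{s}}\otimes\Uof{p^{t}}$ acting factorwise. The first step is to produce a copy of $\Gamma_{s}$: take the subgroup generated by $A_{0},\dots,A_{s-1}$, by $B_{0},\dots,B_{s-1}$, and by the central $S^{1}$. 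Inspection of the explicit formulas for $A_{r}$ and $\sigma_{r}$ shows that for $r<s$ these matrices only ``see'' the residue of $i$ modulo $p^{s}$, hence they act as $(\,\text{something in }\Uof{p^{s}}\,)\otimes\Id_{p^{t}}$; the commutator relations \eqref{eq: Gamma_k_commutator_equations} restricted to indices $r,s<s$ are exactly the defining relations of $\Gamma_{s}$, so this subgroup is isomorphic to $\Gamma_{s}$, sitting inside $\Uof{p^{s}}\otimes\Id$.

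The second step is to produce a commuting copy of $\Delta_{t}$. For this I would use the generators coming from the indices $r=s,\dots,s+t-1$, but only on the $\Bcal$ side (or only on the $\Acal$ side): the subgroup generated by $B_{s},\dots,B_{s+t-1}$ is elementary abelian of rank $t$, i.e.\ a copy of $\Delta_{t}$, and by the commutator relations \eqref{eq: Gamma_k_commutator_equations}, $[B_{r},B_{r'}]=\Id$ for all $r,r'$, while $[A_{r},B_{r'}]=\Id$ whenever $r\neq r'$. Since the $\Gamma_{s}$ constructed above uses only $A_{r},B_{r}$ with $r<s$ and the $\Delta_{t}$ uses only $B_{r}$ with $r\geq s$, every generator of $\Gamma_{s}$ commutes with every generator of $\Delta_{t}$. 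Thus the subgroup they jointly generate is a quotient of $\Gamma_{s}\times\Delta_{t}$.

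The third step is to check that this quotient is the whole of $\Gamma_{s}\times\Delta_{t}$, i.e.\ that $\Gamma_{s}\cap\Delta_{t}$ is trivial inside $\Gamma_{k}$. The cleanest way is to pass to $P\Upk$: the images of $A_{0},\dots,A_{k-1},B_{0},\dots,B_{k-1}$ are a basis of $\Gammabar_{k}\cong(\integers/p)^{2k}$, so the image of $\Gamma_{s}$ is spanned by $\bar A_{0},\dots,\bar A_{s-1},\bar B_{0},\dots,\bar B_{s-1}$ and the image of $\Delta_{t}$ by $\bar B_{s},\dots,\bar B_{s+t-1}$; these span complementary subspaces, so the images intersect trivially and the product is direct modulo the center. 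Since $\Delta_{t}$ (being a group of permutation matrices) meets the center $S^{1}$ trivially and the extension $1\to S^{1}\to\Gamma_{k}\to\Gammabar_{k}\to 1$ is central, a short diagram chase upgrades this to $\Gamma_{s}\cap\Delta_{t}=\{1\}$ in $\Gamma_{k}$, giving $\Gamma_{s}\times\Delta_{t}\hookrightarrow\Gamma_{k}$. The main obstacle is bookkeeping: one must verify carefully from the explicit index formulas that the chosen generators really do respect the tensor factorization $\complexes^{p^{s}}\otimes\complexes^{p^{t}}$ in the claimed way, and that the case $t=0$ or $s=0$ (and the degenerate $k=0$) are covered — these are routine but need the formulas for $\sigma_{r}$ and $A_{r}$ to be unwound once and for all.
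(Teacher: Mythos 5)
Your proof is correct and takes essentially the same approach as the paper, namely selecting a subset of the explicit generators $A_r$, $B_r$, and $S^{1}$ and invoking the commutator relations~\eqref{eq: Gamma_k_commutator_equations}. The only inessential difference is which generators supply the $\Delta_t$ factor: the paper takes $S^{1}$, $A_{0},\dots,A_{s+t-1}$, $B_{0},\dots,B_{s-1}$ (so the extra $A$'s give $\Delta_t$), whereas you take the extra $B$'s; since $\Acal_k$ and $\Bcal_k$ are conjugate the two choices are equivalent, and your tensor-factorization and directness checks merely spell out details the paper leaves implicit.
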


\begin{proof}
The required subgroup is generated by
$S^{1}$, the matrices $A_{0}$, \dots, $A_{s+t-1}$, and
the matrices $B_{0}$, \dots,~$B_{s-1}$.
\end{proof}

A consequence of Lemma~\ref{lemma: Gamma_k extension} is that
$\Gamma_{k}\subseteq\Upk$ is an example of a \pdash toral subgroup that
satisfies the first conclusion of
Theorem~\ref{thm: H elementary abelian}. The next lemma says that
$\Gamma_{k}$ satisfies the second conclusion as well.

\begin{lemma}     \label{lemma: Gamma zero character}
The character of $\Gamma_{k}$ is nonzero only on the elements of~$S^{1}$.
\end{lemma}

\begin{proof}
  The character of $\Gamma_{k}$ on elements of $\Acal_{k}$ and $\Bcal_{k}$ is zero by
  Lemmas~\ref{lemma: character B_k zero}
  and~\ref{lemma: character A_k zero}. Multiplying any of these
  matrices by an element of $S^{1}$, i.e., a scalar, gives a
  matrix that also has zero trace.  Finally, products of nonidentity elements
  of $\Bcal_{k}$ with elements of $\Acal_{k}S^{1}$ are obtained by
  multiplying matrices in $\Bcal_{k}$, which have only zero entries on
  the diagonal, by diagonal matrices. The resulting products likewise
  have no nonzero diagonal entries and thus have zero trace.
\end{proof}

\begin{remark*}
  We note that, by inspection of the commutator relations,
  $S^{1}\times\Acal_{k}$ and $S^{1}\times\Bcal_{k}$ normalize each
  other in $\Gamma_{k}$. Suppose we decompose $\Cpk$ by
  the $p^{k}$ one-dimensional irreducible representations of
  $\Acal_{k}$.  That decomposition is weakly fixed by $\Bcal_{k}$,
  and further, $\Bcal_{k}$ acts transitively on the classes in the
  decomposition because $\Gamma_{k}$ is irreducible. Likewise, the
  decomposition of $\Cpk$ by the $p^{k}$ one-dimensional
  irreducible representations of $\Bcal_{k}$ is weakly fixed by
  $\Acal_{k}$, which has transitive action on the classes.
\end{remark*}

\section{Alternating forms}
\label{section: alternating forms}

From Theorem~\ref{thm: H elementary abelian}, we know that if $H$ is a
problematic \pdash toral subgroup of $\Un$, then its image $\Hbar$ in
$P\Un$ is an elementary abelian \pdash group.  We would like to know
the possible group isomorphism types of such subgroups of~$\Un$.
For simplicity, we restrict ourselves to subgroups $H$ that
contain the central~$S^{1}$ of~$U(n)$.
(See the proof of Lemma~\ref{lemma: only rank one torus}.)
Our main results are
Propositions~\ref{proposition: forms=groups}
and~\ref{proposition: group classification}, below.
Once the
group-theoretic classification is complete, we use representation
theory in Section~\ref{section: problematic subgroups} to pin down the
conjugacy classes of elementary abelian \pdash subgroups of~$\Un$ that
can be problematic.

Before proceeding, we note that the remarkable paper \cite{AGMV} of
Andersen-Grodal-M{\o}ller-Viruel classifies non-toral elementary
abelian \pdash subgroups (for odd primes $p$) of the simple and
center-free Lie groups. In particular, Theorem~8.5 of that work
contains a classification of all the elementary abelian \pdash
subgroups of $P\Un$, building on earlier work of Griess~\cite{Griess}.
Our approach is independent of this classification, and works for all
primes, using elementary methods.

We make the following definition.

\begin{definition}  \label{defn: abstract proj elem}
A \pdash toral group $H$ is an \definedas{abstract projective
elementary abelian \pdash group} if $H$ can be written as a central
extension
\[
1\rightarrow S^{1}\rightarrow H\rightarrow V\rightarrow 1,
\]
where $V$ is an elementary abelian \pdash group.
\end{definition}

We begin by recalling some background on forms.  Let $A$ be a
finite-dimensional $\field_p$-vector space, and let
$\alpha\colon A\times A \to \field_p$ be a bilinear form.  We say that
$\alpha$ is \definedas{totally isotropic} if $\alpha(a,a)=0$ for all
$a\in A$. (A totally isotropic form is necessarily skew-symmetric, as
seen by expanding $\alpha(a+b, a+b)=0$, but the reverse is not true for
$p=2$.)  If $\alpha$ is not only
totally isotropic, but also non-degenerate, then it is called a
\definedas{symplectic form}. Any vector space with a symplectic form
is even-dimensional and has a (nonunique) basis
$e_1,\dots,e_s,f_1,\dots, f_s$, called a \definedas{symplectic basis},
with the property that $\alpha(e_i,e_j)=0=\alpha(f_i,f_j)$ for all
$i,j$, and $\alpha(e_i,f_j)$ is $1$ if $i=j$ and zero otherwise. All
symplectic vector spaces of the same dimension are isomorphic (i.e.,
there exists a linear isomorphism that preserves the form), and if the
vector space has dimension $2s$ we use $\HH_s$ to denote the
associated isomorphism class of symplectic vector spaces.  Let $\TT_t$
denote the vector space of dimension $t$ over $\field_{p}$ with
trivial form.  We have the following standard classification
result.

\begin{lemma}  \label{lemma: classification of forms}
Let $A$ be a vector space over $\field_{p}$ with a
totally isotropic bilinear form $\alpha$. Then there exist $s$ and $t$
such that $A\cong\HH_{s}\oplus\TT_{t}$ by a form-preserving
isomorphism.
\end{lemma}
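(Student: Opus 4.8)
The plan is to proceed by induction on $\dim A$, peeling off either a hyperbolic plane $\HH_1$ or a one-dimensional trivial summand $\TT_1$ at each stage. If $\alpha$ is identically zero, then $A\cong\TT_{t}$ with $t=\dim A$ and we are done, so assume $\alpha\neq 0$. Then there exist $e,f\in A$ with $\alpha(e,f)\neq 0$; rescaling $f$, we may assume $\alpha(e,f)=1$. Since $\alpha$ is totally isotropic we have $\alpha(e,e)=\alpha(f,f)=0$, so the span $W=\Span\{e,f\}$ is a copy of $\HH_1$ and in particular $\alpha$ restricted to $W$ is nondegenerate.

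The key step is to split off $W$ as an orthogonal direct summand. Let $W^{\perp}=\{a\in A : \alpha(a,w)=0=\alpha(w,a)\text{ for all }w\in W\}$. Because $\alpha|_{W}$ is nondegenerate, the standard argument shows $A = W\oplus W^{\perp}$: given $a\in A$, the element $a' = a - \alpha(a,f)e + \alpha(a,e)f$ (using skew-symmetry of $\alpha$, which follows from total isotropy by expanding $\alpha(a+b,a+b)=0$) lies in $W^{\perp}$, and $W\cap W^{\perp}=0$ since $\alpha|_W$ is nondegenerate. The form $\alpha$ restricted to $W^{\perp}$ is again totally isotropic, and $\dim W^{\perp} = \dim A - 2 < \dim A$, so by induction $W^{\perp}\cong\HH_{s'}\oplus\TT_{t}$ by a form-preserving isomorphism. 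Combining, $A\cong\HH_1\oplus\HH_{s'}\oplus\TT_{t}\cong\HH_{s}\oplus\TT_{t}$ with $s=s'+1$, where the last identification uses that a sum of symplectic spaces is symplectic with a symplectic basis obtained by concatenation.

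I do not anticipate a serious obstacle here; the only point requiring a little care is making sure the orthogonal complement argument goes through when $p=2$, but this is fine because total isotropy (not merely skew-symmetry) is assumed, so $\alpha(e,e)=\alpha(f,f)=0$ genuinely holds and the projection formula onto $W^{\perp}$ is valid in all characteristics. One should also note that the decomposition is unique: $t$ is the dimension of the radical $\{a : \alpha(a,-)=0\}$ and $s = \tfrac12(\dim A - t)$, which pins down the isomorphism type, though uniqueness is not needed for the statement as given.
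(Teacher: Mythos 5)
The paper states this lemma as ``the following standard classification result'' and gives no proof, so there is no internal argument to compare against. Your proof is the standard one---inductively split off a hyperbolic plane $\HH_1$ via the orthogonal projection $a\mapsto a-\alpha(a,f)e+\alpha(a,e)f$, then apply the inductive hypothesis to $W^{\perp}$---and it is correct, including the observation that total isotropy (not just skew-symmetry) is what makes the argument work uniformly at $p=2$.
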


Our next task is to relate the preceding discussion to abstract
projective elementary abelian \pdash groups. For the remainder of the
section, assume that $H$ is an abstract projective elementary abelian \pdash
group
\begin{equation*}
1\rightarrow S^{1}\rightarrow H\rightarrow V\rightarrow 1.
\end{equation*}
Choose an identification of
$\integers/p$ with the elements of order $p$ in $S^{1}$.
Given $x,y\in V$, let $\xwiggle, \ywiggle$ be lifts of $x,y$ to $H$.
Define the \definedas{commutator form associated to $H$} as the form
on $V$ defined by
\begin{equation}  \label{eq: commutator form}
\alpha(x,y)=
\left[\xwiggle,\ywiggle\right]=\xwiggle\ywiggle\xwiggle^{-1}\ywiggle^{-1}.
\end{equation}

\begin{lemma}   \label{lemma: BilinearForm1}
Let $H$ and $\alpha$ be as above, and suppose that $x,y\in V$. Then
\begin{enumerate}
\item $\alpha(x,y)$ is a well-defined element of
    $\integers/p\subseteq S^{1}$,
\item $\alpha$ is a totally isotropic bilinear form on $V$, and
\item isomorphic groups $H$ and $H'$ give isomorphic forms $\alpha$
and $\alpha'$.
\end{enumerate}
\end{lemma}

\begin{proof}
  Certainly $\left[\xwiggle,\ywiggle\right]\in S^{1}$, since $x$ and
  $y$ commute in $V$. If $\zeta\in S^{1}$ then
  $[\zeta\xwiggle,\ywiggle]=[\xwiggle,\ywiggle]=[\xwiggle,\zeta\ywiggle]$
  because $S^{1}$ is central in $H$, which shows that $\alpha$ is
  independent of the choice of lifts $\tilde x$ and $\tilde y$, and
  that $\alpha$ is linear with respect to scalar multiplication.

  To show that $[\xwiggle,\ywiggle]$ has order $p$, we note that
  commutators in a group satisfy the following versions of the
  Hall-Witt identities, as can be verified by expanding and simplifying:
\begin{align*}
[a,bc] 
       &=[a,b]\cdot \big[b,[a,c]\big]\cdot [a,c]\\
%
[ab,c]
      &= \big[a,[b,c] \big]\cdot [b,c]\cdot [a,c].
\end{align*}
We know that $[H,H]$ is contained in the center of $H$, so $[H,[H,H]]$
is the trivial group. Hence for~$H$, the identities reduce to
 \begin{equation}
\label{eq: Witt Hall}
 \begin{aligned}{}
     [a,bc] &= [a,b][a,c] \\
     [ab,c] &= [a,c][b,c].
 \end{aligned}
 \end{equation}
In particular,
$\left[\xwiggle,\ywiggle\right]^{p}
        =\left[\xwiggle,\ywiggle^{p}\right]= e$,
since $\ywiggle^{p}\in S^{1}$ commutes with $\xwiggle$.
Bilinearity of $\alpha$ with respect to addition follows directly
from~\eqref{eq: Witt Hall}. The form is totally isotropic because
an element commutes with itself.

Finally, an isomorphism $H\rightarrow H'$ necessarily restricts to an
isomorphism on the identity component and induces a diagram
\[
\begin{CD}
1@>>> S^{1}@>>> H@>>> V@>>> 1\\
@. @V{\cong}VV @V{\cong}VV @VVV\\
1@>>> S^{1}@>>> H'@>>> V'@>>> 1,
\end{CD}
\]
which, in turn, induces an isomorphism of the associated forms $\alpha$
and~$\alpha'$ on $V$ and~$V'$, respectively.
\end{proof}

Lemma~\ref{lemma: BilinearForm1} shows
that~\eqref{eq: commutator form} gives a function from isomorphism
classes of projective elementary abelian \pdash groups to isomorphism
classes of totally isotropic forms over~$\integers/p$.  Conversely, we
can start with a form $\alpha$ and directly construct an
abstract projective elementary abelian \pdash group~$H_{\alpha}$. Let
$\alpha\colon V\times V\rightarrow\integers/{p}$ be a totally
isotropic bilinear form, which can be regarded as a function
$\alpha\colon V\times V\rightarrow S^{1}$. Let
\begin{align*}
V_{K} & =\{v\in V \mid \alpha(v,x)=0 \ \text{ for all } x\in V\}\\
  & = \ker\left( V\rightarrow V^{\ast} \right),
\end{align*}
where $V^\ast$ denotes the dual of $V$. Then $\alpha$ restricted to
$V_{K}$ is trivial.

Now we make some choices, and we address the issue of the choices a
little later in the section.  Choose a complement $V_{c}$ to $V_{K}$ in
$V$; note $V_{c}$ is necessarily orthogonal to~$V_{K}$. Since $\alpha$ must
be symplectic on $V_{c}$, we can choose a symplectic basis
$e_{1},\dots,e_{r},f_{1},\dots,f_{r}$ for $V_{c}$; let
$V_{E}$ and $V_{F}$ denote the spans of $E=\{e_{1},\dots,e_{r}\}$ and
$F=\{f_{1},\dots,f_{r}\}$, respectively.  By construction, we can write
any $v\in V$ uniquely as a sum $v=\vvec{K}+\vvec{E}+\vvec{F}$ where
$\vvec{K}\in V_{K}$, $\vvec{E}\in V_{E}$, and $\vvec{F}\in V_{F}$.

Let $H_{\alpha}$ be the set $S^{1}\times V$.
Given the previous choices, we can
endow $H_{\alpha}$ with the following operation~$\alpha(E,F)$:
\begin{equation}  \label{eq: group operation}
(z,v)*_{\alpha(E,F)}(z',v')
     =\left(\strut zz'\,\alpha(\vvec{F},\vprimevec{E}), \ v+v'\right).
\end{equation}

\begin{proposition}\label{prop: form gives extension}
For an elementary abelian \pdash group $V$ and a totally isotropic
bilinear form $\alpha \colon V\times V\to \integers/p$ as above,
we have the following.
\begin{enumerate}
\item The operation~\eqref{eq: group operation} gives $H_{\alpha}$ the
  structure of an abstract projective elementary abelian \pdash group
  with associated commutator pairing~$\alpha$.
\item The group isomorphism class of $H_{\alpha}$ depends only on the
  isomorphism class of $\alpha$ as a bilinear form.
\end{enumerate}
Further, non-isomorphic forms $\alpha$ and $\alpha'$ on $V$ give
nonisomorphic groups $H_{\alpha}$ and $H_{\alpha'}$.
\end{proposition}

\begin{proof}
  The element $(1,0)$ serves as the identity in $H_\alpha$. By
  bilinearity of~$\alpha$, we know
  $\alpha\left(\vvec{F},-\vvec{E}\right)+\alpha\left(\vvec{F},\vvec{E}\right)=0$,
  which allows us to check that the inverse of $(z,v)$ is
  $\left(z^{-1}\alpha\left(\vvec{F},\vvec{E}\right),-v\right)$.  A
  straightforward computation verifies associativity, showing that
  $*_{\alpha(E,F)}$ defines a group law, and another shows that
  $H_{\alpha}$ has $\alpha$ for its commutator pairing.

  We need to check the effect of the choices we made when we
  defined the operation~$\alpha(E,F)$ on~$H_{\alpha}$.  The subspace
  $V_{K}\subseteq V$ is well-defined, but $V_{E}$ and $V_{F}$ are not,
  and they are used in the definition~~$\alpha(E,F)$.
  Suppose that $E,F$ and $E',F'$ are two
  choices for a symplectic basis spanning (not necessarily identical)
  complements of $V_{K}$ in~$V$. There is an automorphism of $\alpha$
  that takes $E$ to $E'$
  and $F$ to $F'$, and then this automorphism defines an isomorphism
$\left(H_{\alpha},\alpha(E,F)\right)
          \cong \left(H_{\alpha},\alpha(E',F')\right)$.
  Hence the group isomorphism class of $H_{\alpha}$ is well-defined,
  independent of the choices made to define the group operation.

Similarly, if $V\xrightarrow{\cong}V'$ induces an isomorphism of
forms $\alpha, \alpha'$, then compatible choices can be made for the
symplectic bases $E,F\subseteq V$ and $E', F'\subseteq V'$, and
these choices will induce an
isomorphism of $\left(H_{\alpha}, \alpha(E,F)\right)$ with
$\left(H_{\alpha'}, \alpha'(E',F')\right)$.

Lastly, if $\alpha$ and $\alpha'$ are not isomorphic, then by
Lemma~\ref{lemma: classification of forms} their trivial components
must be of different dimensions.  It follows that the centers of
$H_{\alpha}$ and $H_{\alpha'}$ are not isomorphic (for example, they
have a different number of connected components), and $H_{\alpha}$ and
$H_{\alpha'}$ are therefore not isomorphic as groups.
\end{proof}

Proposition~\ref{prop: form gives extension} tells us that the construction
$\alpha\mapsto H_{\alpha}$ defines a monomorphism from the
isomorphism classes of totally isotropic bilinear forms over $\integers/p$
to the isomorphism classes of abstract projective elementary abelian
$p$-groups. It remains to show that this function is an epimorphism,
which we do by constructing a group for each form.
For later purposes, we pay special attention to the identity component.

\begin{proposition} \label{proposition: one-to-one}
  Let $H$ be an abstract projective elementary abelian \pdash group
  with associated commutator form
  $\alpha\colon V\times V\rightarrow\integers/p$.  Let
  $\phi:S^{1}\rightarrow H_{0}$ be an isomorphism of $S^{1}$ with the
  identity component of~$H$.  Then $H_{\alpha}$ is isomorphic to $H$
  via an isomorphism that restricts to $\phi$ on the identity component
  of~$H_{\alpha}$.
\end{proposition}

\begin{proof}
  Let $V_{K}$, $V_{E}$, $V_{F}$ be the subspaces of $V$ defined just
  prior to Proposition~\ref{prop: form gives extension}.  The basis
  elements of $V_{E}$ can be lifted to elements of $H$, which can be
  chosen to be of order~$p$ because $S^{1}$ is a divisible group.  The
  lifts commute since the form is trivial on $V_{E}$. Mapping basis
  elements of $V_{E}$ to their lifts in $H$ gives a monomorphism of
  groups $V_{E}\hookrightarrow H$ whose image we call $W_{E}$.
  Likewise, we can choose lifts $V_{K}\hookrightarrow H$ and
  $V_{F}\hookrightarrow H$, whose images are subgroups $W_{K}$ and
  $W_{F}$ of $H$, respectively.

  Recall that as a set, $H_{\alpha}=S^{1}\times V$, and for $v\in V$,
  we have $v=\vvec{K}+\vvec{E}+\vvec{F}$ as before.  Let $\wvec{K}$,
  $\wvec{E}$, and $\wvec{F}$ be the images of $\vvec{K}$, $\vvec{E}$,
  and $\vvec{F}$ under the lifting homomorphisms of the previous
  paragraph.  We extend the given isomorphism $\phi:S^{1}\rightarrow
  H_{0}$ to a function $\Phi\colon H_{\alpha}\rightarrow H$ by
\begin{equation}  \label{eq: define phi}
\Phi\left(z, \vvec{K}+\vvec{E}+\vvec{F} \right)=
         \phi(z)\,\wvec{K}\wvec{E}\wvec{F}.
\end{equation}
(Note that we write the group operation additively in $V$, which is
abelian, but multiplicatively in $H$, which may not be.)

We assert that $\Phi$ is a group homomorphism. To see that, suppose we
have two elements $(z,v)$ and $(z',v')$ of $H_{\alpha}$. If we multiply
first in $H_{\alpha}$ we get
$\left(\strut zz'\,\alpha(\vvec{F},\vprimevec{E}), \ v+v'\right)$,
and then application of $\Phi$ gives us
\begin{equation}  \label{eq: messy product}
\phi\left(zz'\right)\,\alpha(\vvec{F},\vprimevec{E})
            (\vvec{K}\vprimevec{K})(\vvec{E}\vprimevec{E})(\vvec{F}\vprimevec{F}).
\end{equation}
On the other hand, if we apply $\Phi$ first and then multiply, we get
$\left(\phi(z)\,\vvec{K}\vvec{E}\vvec{F}\right)
           \left(\phi(z')\,\vprimevec{K}\vprimevec{E}\vprimevec{F}\right)$,
which can be rewritten as
\begin{align}   \label{eq: another messy product}
\phi\left(zz'\right)\,\left(\vvec{K}\vprimevec{K}\right)
            \left(\vvec{E}\vvec{F}\vprimevec{E}\vprimevec{F}\right).
\end{align}
To compare \eqref{eq: messy product}
to~\eqref{eq: another messy product}, we need to relate $\vprimevec{E}\vvec{F}$
and $\vvec{F}\vprimevec{E}$.  However, the commutators in $H$ are given exactly
by $\alpha$, so $\vvec{F}\vprimevec{E}=\alpha(\vvec{F},\vprimevec{E})\vprimevec{E}\vvec{F}$, which
allows us to see that
\eqref{eq: messy product} and \eqref{eq: another messy product} are
equal.  We conclude that $\Phi$ is a group homomorphism.

Finally, we need to know that $\Phi$ is a bijection.  To see that
$\Phi$ is surjective, observe that if $h\in H$ maps to $v\in V$ where
$v=\vvec{K}+\vvec{E}+\vvec{F}$, then $h$ and
$\wvec{K}\wvec{E}\wvec{F}$ differ only by some element $z$ of the
central $S^{1}$. Hence every element of $H$ can we written as
$z \wvec{K} \wvec{E} \wvec{F}$ for some $z$, $\wvec{K}$, $\wvec{E}$,
$\wvec{F}$, and $\Phi$ is surjective.  However,
\eqref{eq: define phi}~tells us that $\Phi$ is the isomorphism $\phi$
on the identity component,~$S^{1}$.  Further, $\Phi$ is a surjection
of the finite set of components, hence a bijection of components.  We
conclude that $\Phi$ is an isomorphism.
\\

\end{proof}

\begin{proposition} \label{proposition: forms=groups}
  The commutator form gives a one-to-one correspondence between
  isomorphism classes of abstract projective elementary abelian
  \pdash groups and isomorphism classes of totally isotropic bilinear
  forms over~$\integers/p$.
\end{proposition}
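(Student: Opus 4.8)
The plan is to package the two preceding propositions into the asserted bijection by checking that the constructions $H\mapsto\alpha_H$ (commutator form) and $\alpha\mapsto H_\alpha$ are mutually inverse on isomorphism classes. Three things need to be verified: that the commutator form is a well-defined invariant of the isomorphism class of $H$; that $\alpha\mapsto H_\alpha$ is a well-defined map on isomorphism classes of forms whose output is indeed an abstract projective elementary abelian $p$-group; and that the two composites are the identity.

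First I would note that Lemma~\ref{lemma: BilinearForm1} already shows $\alpha_H$ is a totally isotropic bilinear form on $V=H/S^1$, and that a choice of identification of $\integers/p$ with the $p$-torsion of $S^1$ has been fixed once and for all, so $\alpha_H$ has a canonical target. To see that $\alpha_H$ depends only on the isomorphism class of $H$ (as an abstract projective elementary abelian $p$-group), observe that any isomorphism $H\xrightarrow{\sim}H'$ of such groups must carry the central $S^1$ to the central $S^1$ (the center of $H$ contains $S^1$, and $S^1$ is the unique maximal divisible, or the unique maximal connected, subgroup), hence restricts to an automorphism of $S^1$; since $S^1$ has only the identity and complex conjugation as continuous automorphisms, and we may compose with conjugation if needed — or simply observe that a group isomorphism of abstract projective elementary abelians is required to respect the chosen $S^1\hookrightarrow H$ up to this ambiguity — the induced isomorphism $V\xrightarrow{\sim}V'$ intertwines $\alpha_H$ with $\alpha_{H'}$ (possibly after sign) because commutators are preserved by any group homomorphism. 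This is the step I expect to be the main obstacle: being careful about the automorphisms of $S^1$ and exactly what structure an ``isomorphism of abstract projective elementary abelian $p$-groups'' is required to preserve, so that the correspondence is genuinely well-defined and bijective rather than merely surjective.

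Next, Proposition~\ref{prop: form gives extension} shows that $\alpha\mapsto H_\alpha$ produces an abstract projective elementary abelian $p$-group with commutator form $\alpha$, and that isomorphic forms yield isomorphic groups; this gives a well-defined map from isomorphism classes of totally isotropic forms to isomorphism classes of such groups, and the statement ``commutator form of $H_\alpha$ is $\alpha$'' shows that the composite (forms) $\to$ (groups) $\to$ (forms) is the identity, so the form-to-group map is injective on isomorphism classes. It is also surjective on isomorphism classes by Proposition~\ref{proposition: one-to-one}, which asserts exactly that every abstract projective elementary abelian $p$-group $H$ is isomorphic to $H_{\alpha_H}$; equivalently, the composite (groups) $\to$ (forms) $\to$ (groups) is the identity.

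Finally I would assemble these: the two maps on isomorphism classes are mutually inverse, hence each is a bijection, which is precisely the claim. In writing this up I would keep it to a few lines — ``By Lemma~\ref{lemma: BilinearForm1} and Proposition~\ref{prop: form gives extension}, the assignments $H\mapsto\alpha_H$ and $\alpha\mapsto H_\alpha$ descend to maps between the two sets of isomorphism classes. Proposition~\ref{prop: form gives extension} shows $\alpha_{H_\alpha}\cong\alpha$ and Proposition~\ref{proposition: one-to-one} shows $H_{\alpha_H}\cong H$, so the maps are mutually inverse bijections.'' — with the only substantive addition being the remark on automorphisms of $S^1$ that makes $H\mapsto\alpha_H$ well-defined on isomorphism classes.
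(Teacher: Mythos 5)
Your proposal is correct and takes essentially the same route as the paper: the paper's proof is even terser, simply observing that $H_\alpha$ realizes $\alpha$ (so the assignment is surjective) and that $H\cong H_{\alpha}\cong H_{\alpha'}\cong H'$ whenever $\alpha\cong\alpha'$ (so it is injective), citing Propositions~\ref{prop: form gives extension} and~\ref{proposition: one-to-one}. The well-definedness issue you flag is real and the paper passes over it in silence, but your resolution is left dangling at ``possibly after sign'': a continuous isomorphism $H\xrightarrow{\sim}H'$ restricts to an automorphism of the identity component $S^1$, which is either the identity or inversion, so the induced map $V\to V'$ carries $\alpha_H$ to $\pm\alpha_{H'}$; to finish you should add that $-\alpha\cong\alpha$ for any totally isotropic form over $\field_p$ (swap $e_i\leftrightarrow f_i$ in a symplectic basis, or simply invoke Lemma~\ref{lemma: classification of forms}), so the sign ambiguity is harmless and $H\mapsto[\alpha_H]$ is indeed well-defined on isomorphism classes.
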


\begin{proof}
  Every totally isotropic form $\alpha$ is realized as the commutator
  form of the group~$H_{\alpha}$.  By
  Propositions~\ref{prop: form gives extension}
  and~\ref{proposition: one-to-one}, if $H$ and $H'$ have isomorphic
  commutator forms $\alpha$ and $\alpha'$, then
\[
H\cong H_{\alpha}\cong H_{\alpha_{'}}\cong H'.
\]
\end{proof}

Proposition~\ref{proposition: forms=groups} allows us to give the
following explicit classification of abstract projective elementary
abelian \pdash groups. This classification result can also be found in
\cite{Griess} Theorem~3.1, though in this section we have given an
elementary and self-contained discussion and proof.

\begin{proposition}  \label{proposition: group classification}
Suppose that $H$ is an abstract projective elementary abelian \pdash group
\[
1\rightarrow S^{1}\rightarrow H\rightarrow V\rightarrow 1.
\]
Let $2s$ be the maximal rank of a symplectic subspace of $V$ under the
commutator form of $H$, and let $t=\rank(V)-2s$. Then $H$ is
isomorphic to $\Gamma_{s}\times\Delta_{t}$.
\end{proposition}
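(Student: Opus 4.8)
The plan is to reduce everything to the classification of totally isotropic forms. By Proposition~\ref{proposition: forms=groups}, the group $H$ is determined up to isomorphism by its commutator form $\alpha$ on $V$, so it suffices to compute that form and to recognize which group it corresponds to. First I would apply Lemma~\ref{lemma: classification of forms} to get a form-preserving isomorphism $V\cong\HH_{s'}\oplus\TT_{t'}$ for some $s'$, $t'$. Here $\TT_{t'}$ is exactly the radical of $\alpha$ and $\HH_{s'}$ is a symplectic summand, so $2s'$ is the maximal rank of a symplectic subspace of $V$: any subspace on which $\alpha$ is nondegenerate meets the radical trivially and hence embeds in the $2s'$-dimensional quotient $V/\mathrm{rad}$, while $V_{e}\oplus V_{f}$ realizes this bound. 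Consequently $t'=\rank(V)-2s'$, so $s'=s$ and $t'=t$, and $H\cong H_{\HH_{s}\oplus\TT_{t}}$ by Proposition~\ref{proposition: forms=groups} (equivalently by the explicit construction of Proposition~\ref{prop: form gives extension}).

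It then remains to identify $H_{\HH_{s}\oplus\TT_{t}}$ with $\Gamma_{s}\times\Delta_{t}$. I would first note that $\Gamma_{s}\times\Delta_{t}$ is itself an abstract projective elementary abelian \pdash group: Lemma~\ref{lemma: Gamma_k extension} gives the central extension $1\rightarrow S^{1}\rightarrow\Gamma_{s}\rightarrow\Delta_{s}\times\Delta_{s}\rightarrow 1$, and forming the product with $\Delta_{t}\cong(\integers/p)^{t}$ exhibits $\Gamma_{s}\times\Delta_{t}$ as a central extension of the elementary abelian group $(\integers/p)^{2s+t}$ by $S^{1}$ (the case $s=0$ being covered by the remark following Lemma~\ref{lemma: Gamma_k extension}). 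Next I would compute its commutator form. The factor $\Delta_{t}$ is a central direct factor, so it contributes the trivial form $\TT_{t}$. On the $\Gamma_{s}$ factor, the images $\bar A_{0},\dots,\bar A_{s-1},\bar B_{0},\dots,\bar B_{s-1}$ form a basis of the quotient $(\integers/p)^{2s}$, and the commutator relations~\eqref{eq: Gamma_k_commutator_equations} say precisely that every pairing vanishes except $\alpha(\bar B_{r},\bar A_{r})=\zeta$, a fixed nonzero element under our identification of $\integers/p$ with the order-$p$ elements of $S^{1}$. Rescaling the $\bar B_{r}$ turns $\{\bar A_{r},\bar B_{r}\}$ into a symplectic basis, so the commutator form of $\Gamma_{s}$ is $\HH_{s}$ and that of $\Gamma_{s}\times\Delta_{t}$ is $\HH_{s}\oplus\TT_{t}$.

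Combining the two computations, $H$ and $\Gamma_{s}\times\Delta_{t}$ have isomorphic commutator forms, so Proposition~\ref{proposition: forms=groups} yields $H\cong\Gamma_{s}\times\Delta_{t}$, as claimed. There is no serious obstacle once Proposition~\ref{proposition: forms=groups} is available; the only points needing care are checking that the Witt-type decomposition of Lemma~\ref{lemma: classification of forms} matches the invariants $s$ and $t$ as defined in the statement, and reading off the commutator form of $\Gamma_{s}$ correctly from~\eqref{eq: Gamma_k_commutator_equations}, noting in particular that the precise value $\zeta$ of the nontrivial pairings is irrelevant up to form isomorphism.
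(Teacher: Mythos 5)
Your argument is correct and takes essentially the same route as the paper: both reduce the statement to Proposition~\ref{proposition: forms=groups} by comparing the commutator forms of $H$ and $\Gamma_{s}\times\Delta_{t}$. You merely supply the details that the paper's proof leaves implicit, namely that the invariants $s,t$ in the statement match the Witt decomposition of Lemma~\ref{lemma: classification of forms}, and that the relations~\eqref{eq: Gamma_k_commutator_equations} exhibit $\HH_{s}\oplus\TT_{t}$ as the commutator form of $\Gamma_{s}\times\Delta_{t}$.
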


\begin{proof}
  The commutator form of $\Gamma_{s}\times\Delta_{t}$ is isomorphic to
  that of $H$, so the result follows from
  Proposition~\ref{proposition: forms=groups}.
  To interpret the proposition when $s=0$, note that $\Gamma_{0}=S^{1}$,
  so the proposition says that if $s=0$, then
  $H\cong S^{1}\times\Delta_{t}\cong S^{1}\times V$.
\end{proof}

\begin{remark}
  As pointed out by D.~Benson, one could also approach this
  classification result via group cohomology, using the fact that
  equivalence classes of extensions as in
  Definition~\ref{defn: abstract proj elem} correspond to elements of
  $H^2(V; S^1)$.  An argument using the Bockstein homomorphism shows
  that the group $H^2(V; S^1)\cong H^3(V,\Z)$ can be identified with
  the exterior square of $H^1(V,\Z/p)$, which is in turn isomorphic to
  the space of alternating forms $V\times V \to \Z/p$.  The standard
  factor set approach to $H^2$ can be used to identify such a form
  with the commutator pairing of the extension (as in \cite{Brown} or
  \cite{Weibel}). The factor set associated to an extension is
  similarly defined and often identically denoted as the commutator
  pairing, but the two pairings are \emph{not} the same. In
  particular, a factor set need not be bilinear or totally isotropic.
\end{remark}

\section{Initial list of problematic subgroups}
\label{section: problematic subgroups}

Throughout this section, assume that $n=mp^k$ where $m$ and $p$ are coprime.
Suppose that $H$ is a problematic \pdash toral subgroup of $\Un$.
We know from the first part of Theorem~\ref{thm: H elementary abelian}
that $H$ must be a projective elementary abelian \pdash group.
If $H$ contains $S^{1}\subseteq \Un$, we know the possible group isomorphism
classes of $H$ from Proposition~\ref{proposition: group classification}.
The purpose of this section is to use the character criterion of
Theorem~\ref{thm: H elementary abelian} to narrow down the possible
conjugacy classes of $H$ in $\Un$.

In Section~\ref{section: Gamma_k}, we described the projective elementary \pdash subgroup $\Gamma_{k}\subset\Upk$. In this and subsequent sections, we consider the action of $\Gamma_{k}$ on $\Cn$ by a multiple of its ``standard" action on $\Cpk$: the group $\Gamma_{k}$ acts on $\Cn\cong\complexes^{m}\otimes\Cpk$ by acting trivially on $\complexes^{m}$ and by the standard action on~$\Cpk$. In order to streamline notation, we denote this subgroup of $\Un$ by $\Gamma_{k}$ also, since context will indicate the dimension of the ambient space.

Since $\Gamma_{k}\subseteq\Un$ is represented by block diagonal matrices
with blocks $\Gamma_{k}$, we immediately obtain the following from
Lemma~\ref{lemma: Gamma zero character}.

\begin{lemma}  \label{lemma: Diag character}
The character of $\Gamma_{k}\subseteq\Un$ is nonzero only on the elements of
$S^{1}$, where the character is $\chi(s)=ns\in\complexes$.
\end{lemma}

Our goal is to show that if $H$ is a problematic subgroup of $\Un$ where $n=mp^{k}$ and $m$ and $p$ are coprime, then $H$ is a subgroup of $\Gamma_{k}\subset\Un$.
Although $H$ itself may not be finite, we can use
its finite subgroups to get information about $n$ using the following
result from basic representation theory.

\begin{lemma}\label{lemma: n=mp^k}
  Suppose that $G$ is a finite subgroup of $\Un$ and that
  $\character{G}(y)=0$ unless $y=e$. Then $|G|$
  divides $n$, and the action of $G$ on $\Cn$ is by $n/|G|$
  copies of the regular representation.
\end{lemma}

\begin{proof}
The number of copies of an irreducible character $\character{}$ in
$\character{G}$ is given by the inner product
\[
\langle \character{G},\character{}\rangle
    =\frac{1}{|G|} \sum_{y \in G} \character{G}(y) \overline{\character{}(y)}.
\]
Take $\character{}$ to be the character of the one-dimensional trivial
representation of $G$. The only nonzero term in the
summation occurs when $y=e$, and since $\character{G}(e)=n$,
we find $\langle \character{G},\character{}\rangle=n/|G|$.
Since $\langle \character{G},\character{}\rangle$ must be an integer, we find
that $|G|$ divides $n$. To finish, we observe that the
character of $n/|G|$ copies of the regular
representation is the same as $\character{G}$, which finishes the proof.
\end{proof}

We now have all the ingredients we require to prove
the main result of this section.

\begin{theorem} \label{theorem: Non_contractible_implies_subgroup_Gamma_diag}
\subgroupGammadiagtext
\end{theorem}

\begin{proof}
  We know from
  Theorem~\ref{thm: H elementary abelian}\eqref{item: elem abelian}
  that $H$ is an abstract projective elementary elementary abelian
  \pdash group.  By Proposition~\ref{proposition: group classification},
  $H$ is abstractly isomorphic to $\Gamma_{s}\times\Delta_{t}$ for
  some $s$ and $t$, so $H$ contains a subgroup $(\integers/p)^{s+t}$
  (say, $\Acal_{s}\times\Delta_{t}$). By
  Theorem~\ref{thm: H elementary abelian}\eqref{item: character},
  we have the character criterion on~$H$ necessary to apply
  Lemma~\ref{lemma: n=mp^k}, and we conclude that $p^{s+t}$ divides~$n$.
  Since $n=mp^{k}$ with $m$ coprime to~$p$, we necessarily have
  $s+t\leq k$. Hence by Lemma~\ref{lemma: Gamma_k subgroups}, we know
  $\Gamma_{s}\times\Delta_{t}\subseteq\Gamma_{k}$.

  To finish the proof, we compare two representations
  of~$\Gamma_{s}\times\Delta_{t}$. The first is the composite
\[
\Gamma_{s}\times\Delta_{t}\hookrightarrow\Gamma_{k}\subseteq \Un.
\]
This map gives an identification of the identity component of the
abstract group $\Gamma_{s}\times\Delta_{t}$ with the center
$S^{1}\subseteq\Un$, and in terms of this identification, the character
of the representation is $x\mapsto nx$ on the identity component of
$\Gamma_{s}\times\Delta_{t}$ and zero else
(Lemma~\ref{lemma: Diag character}).

To construct the second representation of~$\Gamma_{s}\times\Delta_{t}$,
we construct a map
$\Gamma_{s}\times\Delta_{t}\rightarrow H$. Since $H$ has the same commutator
form as $\Gamma_{s}\times\Delta_{t}$, by
Proposition~\ref{proposition: one-to-one} there is an isomorphism
$\Gamma_{t}\times\Delta_{t}\rightarrow H\subseteq\Un$ that gives the
same map on identity components as
$\Gamma_{s}\times\Delta_{t}\hookrightarrow\Gamma_{k}
     \hookrightarrow\subseteq \Un$.
Hence the character for this representation is also zero off the identity
component (by Theorem~\ref{thm: H elementary abelian}),
and $x\mapsto nx$ on the central~$S^{1}$.

Thus the two representations of
$\Gamma_{s}\times\Delta_{t}$ have the same character, and we conclude
that they are conjugate. Since the image of one is the subgroup~$H$,
and the image of the other is
  $\Gamma_{s}\times\Delta_{t}\subseteq\Gamma_{k}\subseteq\Un$, the
theorem follows.
\end{proof}

\begin{example}
  Suppose that $p$ is an odd prime, and let $n=2p$. Let $H$ be a
  problematic subgroup of $\Uof{2p}$ acting on~$\Lcal_{2p}$. According
  to Theorem~\ref{theorem: Non_contractible_implies_subgroup_Gamma_diag}, the
  subgroup $H$ is conjugate in $\Uof{2p}$ to a subgroup of~$\Gamma_{1}$.  Since in addition we assume that $H$ contains the
  central $S^{1}$, there are only three possibilities for
  $H$: $S^{1}$ itself, $\Gamma_{1}$ acting by two copies of its standard representation, or $S^{1}\times\Delta_{1}\subset\Gamma_{1}$.
\end{example}

\section{Fixed points and joins}
\label{section: joins}

In this section, we begin the work of computing the fixed points of the $p$-toral subgroups of $\Un$ that are identified in
Theorem~\ref{theorem: Non_contractible_implies_subgroup_Gamma_diag}
as potentially problematic.
Throughout this section, let $n=mp^{k}$, and fix an isomorphism $\Cn\cong\complexes^{m}\otimes\Cpk$.
Let $\Gamma_{k}$ act on $\Cn$ by acting trivially on $\complexes^{m}$
and by its standard representation (described in Section~\ref{section: Gamma_k}) on~$\Cpk$. There is also an action of $\Uof{m}$ on $\Cn$ that commutes with the action of~$\Gamma_{k}$, by letting $\Uof{m}$ act by the standard action on $\complexes^{m}$ and trivially on~$\Cpk$. This action passes to an action of $\Uof{m}$ on the fixed point space~$\weakfixed{\Gamma_{k}}$.

Our goal in this section and the next is to establish which subgroups of $\Gamma_{k}\subseteq\Un$ actually have noncontractible fixed points on~$\Lcal_{n}$. A~starting point is provided by the following result of~\cite{Arone-Lesh-Tits} for $\Gamma_{k}\subset\Upk$ acting on~$\Lcal_{p^{k}}$. We will bootstrap this result to fixed points of $\Gamma_{k}\subset\Un$ acting on~$\Lcal_{n}$.

\begin{proposition}[\cite{Arone-Lesh-Tits}]
        \label{proposition: fixed points of Gamma_k}
For $k\geq 1$, the fixed point space of $\Gamma_k$ acting on $\Lcal_{p^k}$ is homotopy equivalent to a wedge of spheres of dimension~$k-1$.
\end{proposition}

Let $X\join Y$ denote the join of the two spaces $X$ and~$Y$. The theorem below establishes a formula for the fixed point space $\weakfixedspecific{\Gamma_{k}}{mp^k}$ that was suggested to us by G.~Arone. Notice that there is no assumption that $m$ and $p$ should be coprime.

\begin{theorem}  \label{theorem: join theorem}
\jointheoremtext
\end{theorem}

We begin by outlining the proof of Theorem~\ref{theorem: join theorem}.
The strategy is to identify a $\Uof{m}$-subcomplex $\tensorsubcomplex$ of $\weakfixedspecific{\Gamma_{k}}{mp^k}$ such that the nerve of $\tensorsubcomplex$ has the $\Uof{m}$-equivariant homotopy type of the join in Theorem~\ref{theorem: join theorem}. Then we establish that $\tensorsubcomplex$ is a $\Uof{m}$-equivariant deformation retraction of~$\weakfixedspecific{\Gamma_{k}}{mp^k}$.

To construct the subcomplex~$\tensorsubcomplex$,
suppose that $\mu$ and $\nu$ are orthogonal decompositions of $\complexes^{m}$
and~$\Cpk$, respectively, with $\class(\mu)=\{v_{1},\dots,v_{s}\}$ and $\class(\nu)=\{w_{1},\dots,w_{t}\}$. We can tensor the components of $\mu$ and $\nu$ to obtain a decomposition of $\complexes^{m}\otimes\Cpk$ that we denote $\mu\otimes\nu$:
\[
\class(\mu\otimes\nu)=\{v_{i}\otimes w_{j}: 1\leq i\leq s \mbox{ and } 1\leq j\leq t\}.
\]
If $\nu$ is weakly fixed by~$\Gamma_{k}$, then so is $\mu\otimes\nu$, and if at least one of $\mu$ and $\nu$ is proper, then $\mu\otimes\nu$ is proper as well.

\begin{definition}
The subposet
$\tensorsubcomplex\subseteq\weakfixedspecific{\Gamma_{k}}{mp^{k}}$ is the set of objects of the form $\mu\otimes\nu$ where $\mu$ is a decomposition of $\complexes^{m}$ and $\nu$ is a weakly $\Gamma_{k}$-fixed decomposition of $\Cpk$, and at least one of $\mu$ and $\nu$ is proper.
\end{definition}

\begin{remark}  \label{remark: path components Z}
It follows from the definition that $\tensorsubcomplex$ is stabilized by the action of $\Uof{m}$ on $\Cn\cong\complexes^{m}\otimes\Cpk$. In fact,
by  \cite{Oliver-p-stubborn} the centralizer of $\Gamma_{k}$ is actually~$\Uof{m}$, and thus
by Corollary~\ref{cor: J fixed path components} the path components of the object space of $\tensorsubcomplex$ are actually $\Uof{m}$-orbits. The same is true of the morphism space of~$\tensorsubcomplex$, and indeed, for the space of $d$-simplices of $\tensorsubcomplex$ for every~$d$.
\end{remark}

To analyze~$\tensorsubcomplex$, we write it as a union of two subposets, each of which is closed under the action of~$\Uof{m}$.
Let $\first$ (resp.~$\second$) denote the subposet
  of~$\tensorsubcomplex$ consisting of the decompositions
  $\lambda=\mu\otimes\nu$ where $\mu$ (resp.~$\nu$) is a proper
  decomposition of~$\complexes^{m}$ (resp.~$\Cpk$). The object space of $\first$ is stabilized by~$\Uof{m}$, and hence (Remark~\ref{remark: path components Z})
  is a union of path components of the object space of~$\tensorsubcomplex$. The same is true of~$\second$, and likewise the morphism spaces of $\first$ and $\second$ are unions of path components of the morphism space of~$\tensorsubcomplex$.

  Any refinement in $\tensorsubcomplex$ of an object in $\first$ is also
  in~$\first$, and likewise any refinement in $\tensorsubcomplex$ of an object in $\second$ is likewise
  in~$\second$. Hence the nerve of $\tensorsubcomplex$ is the union of the nerve
  of $\first$ and the nerve of~$\second$. We construct a
  $\Uof{m}$-equivariant map of diagrams
\begin{equation}  \label{eq: map arrays}
\left(
\begin{array}{ccc}
\Lcal_{m}\times\weakfixedspecific{\Gamma_{k}}{p^{k}}
     & \longrightarrow &\Lcal_{m}\\
\downarrow\\
\weakfixedspecific{\Gamma_{k}}{p^{k}}
\end{array}
\right)\
\xrightarrow{\ \ \ \ \ \ \ }
\ \left(
\begin{array}{ccc}
\first\cap\second & \longrightarrow \first\\
\downarrow\\
\second
\end{array}
\right)
\end{equation}
\medskip
by doing the following.
\begin{itemize}
\item In the upper right corner, we map a proper decomposition
  $\mu$ of~$\complexes^{m}$ to the decomposition $\mu\otimes\Cpk$, which is in~$\first$.

\item In the lower left corner, we map a decomposition $\nu$ in
  $\weakfixedspecific{\Gamma_{k}}{p^{k}}$ to
   $\complexes^{m}\otimes\nu$, which is in~$\second$.

\item In the upper left corner, we map the pair $(\mu,\nu)$
  to $\mu\otimes\nu$, which is in $\first\cap\second$.
\end{itemize}

In~\eqref{eq: map arrays}, we would like to relate the homotopy pushout of the left diagram to the strict pushout of the right diagram
 (which is $\first\cup\second$). First we establish commutativity of the map of diagrams in~\eqref{eq: map arrays}. We show that at each corner, \eqref{eq: map arrays}~is a homotopy equivalence, and then we show that this statement remains true when we take fixed points under the action of a subgroup $H\subseteq\Uof{m}$. Finally, we show that in the right-hand diagram of~\eqref{eq: map arrays}, the map from the homotopy pushout to the strict pushout is a homotopy equivalence and remains so after taking fixed points under $H\subseteq\Uof{m}$.

\begin{lemma} \label{lemma: commuting ladder}
The following ladder is $\Uof{m}$-equivariantly homotopy
commutative, and the vertical arrows are equivalences of $\Uof{m}$-spaces:
\begin{equation} \label{diag: lemma ladder}
\begin{CD}
     \weakfixedspecific{\Gamma_{k}}{p^{k}}
     @<<<\Lcal_{m}\times\weakfixedspecific{\Gamma_{k}}{p^{k}}
     @>>> \Lcal_{m}\\
@V{\simeq}VV   @V{\cong}VV  @V{\simeq}VV\\
\second@<<< \first\cap\second @>>> \first .
\end{CD}
\end{equation}
\end{lemma}

\begin{proof}
To establish homotopy commutativity, consider
a pair $(\mu,\nu)$ in $\Lcal_{m}\times\weakfixedspecific{\Gamma_{k}}{p^{k}}$.
Going clockwise around the right-hand square of
\eqref{diag: lemma ladder} yields the decomposition $\mu\otimes\Cpk$
in~$\first$.
Going around that square counterclockwise yields the decomposition
$\mu\otimes\nu$ in~$\first$. There is a natural coarsening
$\mu\otimes\nu\rightarrow\mu\otimes\Cpk$, and the coarsening morphism is stabilized by~$\Uof{m}$. Hence the right-hand square of
diagram~\eqref{diag: lemma ladder} induces
a $\Uof{m}$-homotopy commutative diagram of nerves. Similarly,
following $(\mu,\nu)$ clockwise around
the left-hand square gives $\mu\otimes\nu$ in~$\second$, and going counterclockwise gives $\complexes^{m}\otimes\nu$ in~$\second$, and there is a natural $\Uof{m}$-equivariant homotopy given by the coarsening $\mu\otimes\nu\rightarrow\complexes^{m}\otimes\nu$.

The vertical equivalences result from similar arguments, as follows.
\begin{enumerate}
\item To see that the right-hand vertical map,
    $\Lcal_{m}\rightarrow\first$, is a homotopy equivalence of
$\Uof{m}$-spaces, consider that   $\mu\otimes\nu\mapsto\mu\otimes\Cpk$
gives a natural retraction. The coarsening map
$\mu\otimes\nu\rightarrow\mu\otimes\Cpk$ gives a $\Uof{m}$-equivariant homotopy between the retraction and the identity.

\item Likewise, we consider the left-hand map, $\weakfixedspecific{\Gamma_{k}}{p^{k}}\rightarrow\second$. The map
$\mu\otimes\nu\mapsto\complexes^{m}\otimes\nu$ gives a natural retraction, and the coarsening map $\mu\otimes\nu\rightarrow\complexes^{m}\otimes\nu$ is a $\Uof{m}$-equivariant homotopy making it a deformation retraction.

\item The map $\Lcal_{m}\times\weakfixedspecific{\Gamma_{k}}{p^{k}}
      \longrightarrow \first\cap\second$ is a $\Uof{m}$-equivariant isomorphism of posets.
\end{enumerate}
\end{proof}

As a consequence of Lemma~\ref{lemma: commuting ladder}, we know that the map
of diagrams in \eqref{eq: map arrays} induces a map between the homotopy pushouts that is $\Uof{m}$-equivariant and a homotopy equivalence. However,
to get equivalences of $\Uof{m}$-spaces, we need to know what happens after taking fixed points of a subgroup $J\subseteq\Uof{m}$. Hence we need to know the relationship between taking fixed points and taking homotopy pushouts. We give an argument that we learned from C. Malkiewich~\cite{CM-notes}.

\begin{proposition}     \label{proposition: fixed points of mapping cylinder}
Let $f:A\longrightarrow B$ be an equivariant map of spaces with an action of a group~$J$, and let
$\Cylinder(f)$ denote the mapping cylinder of~$f$. Let $C=\Cylinder\left(f^{J}\right)$ be the mapping cylinder
of the function $A^{J}\longrightarrow B^{J}$ given by restricting $f$ to
$J$-fixed points,
 and let $D=\left(\Cylinder(f)\right)^{J}$ be the fixed point space of the action of $J$ on $\Cylinder(f)$.
Then the natural map $C\longrightarrow D$ is a homeomorphism.
\end{proposition}

\begin{proof}
The space $C$ is the quotient space of
$\left(A^{J}\times [0,1]\right)\coprod B^{J}$
by the relation $(a,1)\simeq f(a)$. The inclusions
$A^{J}\times [0,1]\hookrightarrow A\times [0,1]$ and $B^{J}\hookrightarrow B$
induce a natural map $C\rightarrow \Cylinder(f)$, whose image is contained in~$D$, and which is continuous by definition of the quotient topology
on~$C$. Further, using the fact that
$A^{J}$ and $B^{J}$ are closed in $A$ and $B$, respectively,
we can check that $C\rightarrow D$ is a closed map.
A routine check verifies that $C\rightarrow D$ is a bijection. We conclude that $C\rightarrow D$ is a homeomorphism.
\end{proof}

\begin{corollary}     \label{corollary: fixed points and hocolim}
Suppose that
\[
\begin{CD}
A@>>> B\\
@VVV @VVV\\
C @>>> D
\end{CD}
\]
is a homotopy pushout diagram of spaces with an action of a group~$J$, and $J$-equivariant maps. Then
\begin{equation}
\begin{CD}
A^J@>>> B^J\\
@VVV @VVV\\
C^J@>>>D^{J}
\end{CD}
\end{equation}
is also a homotopy pushout diagram.
\end{corollary}

\begin{proof}
The corollary is a consequence of Proposition~\ref{proposition: fixed points of mapping cylinder} once we have replaced $B$ and $C$ with mapping cylinders and $D$ with the double mapping cylinder.
\end{proof}

\begin{proposition} \label{proposition: subcomplex is a join}
The nerve of $\tensorsubcomplex$ is $\Uof{m}$-equivariantly homotopy equivalent to $\Lcal_{m}\join\weakfixedspecific{\Gamma_{k}}{p^{k}}$.
\end{proposition}

\begin{proof}
Lemma~\ref{lemma: commuting ladder}
and Corollary~\ref{corollary: fixed points and hocolim} tell us that
the map of diagrams in \eqref{eq: map arrays} induces an equivalence of $\Uof{m}$-spaces on homotopy pushouts, i.e., a $\Uof{m}$-equivariant map that is a homotopy equivalence on the fixed point space of any subgroup $J\subseteq\Uof{m}$. The homotopy pushout of the left diagram in \eqref{eq: map arrays} is $\Lcal_{m}\join\weakfixedspecific{\Gamma_{k}}{p^{k}}$, and we need to relate this space to the strict pushout (not the homotopy pushout) of the right diagram in \eqref{eq: map arrays}, the strict pushout being $\first\cup\second=\tensorsubcomplex$. Hence the proposition follows once we show that the natural map from the homotopy pushout to the strict pushout for the diagram
\[
\begin{CD}
\first\cap\second @>>> \first\\
@VVV\\
\second
\end{CD}
\]
is a homotopy equivalence, and remains so after taking fixed points for any subgroup $J\subseteq\Uof{m}$. This statement follows if we can prove that for all subgroups $J$ of~$\Uof{m}$, the space $\first^{J}$ is Reedy cofibrant and $(\first\cap\second)^{J}\rightarrow\second^{J}$ is a Reedy cofibration.

To establish that $\first$ itself is Reedy cofibrant, consider the space $\first_{d}$ of $d$-simplices of~$\first$. Let $L_{d}\left(\first\right)$ denote the space of degenerate $d$-simplices of~$\first$ (i.e., the $d$th latching object for~$\first$).
We must show that
\begin{equation}  \label{eq: inclusion of latching}
L_{d}\first\longrightarrow \first_{d}
\end{equation}
is a cofibration of topological spaces. We assert that $L_{d}\first$ is, in fact, a union of path components of~$\first_{d}$.
The complex $\first\subseteq\tensorsubcomplex$ is stabilized by~$\Uof{m}$, and by Remark~\ref{remark: path components Z} the path components of the $d$-simplices of $\tensorsubcomplex$ are $\Uof{m}$-orbits. Hence
$\first_{d}$~is a disjoint union of $\Uof{m}$-orbits, and the same is true
of~$\first_{d-1}$. The degeneracy maps of $\first$ are $\Uof{m}$-equivariant, and it follows that their images are unions of path components. The space $L_{d}\first$ is the union of such images, and is therefore also a union of path components of~$\first_d$, establishing that $\first$ is Reedy cofibrant.

We need to know that the statements of the previous paragraph remain true when we replace $\first$ by $\first^{J}$, where $J$ is any subgroup of~$\Uof{m}$.
By Corollary~\ref{cor: J fixed path components}, the fixed point space of the action of $J$ on a $\Uof{m}$-orbit has path components that are orbits under the action of~$\CentIdent{J}$, the identity component of the centralizer of $J$ in~$\Uof{m}$. We can now follow exactly the same reasoning as in the previous paragraph to conclude that $\first^{J}$ is Reedy cofibrant.

We claim that the map $\first\cap\second\rightarrow \second$ is a Reedy cofibration for essentially the same reasons. We must show that
for each~$d$, the map from the pushout of
the diagram
\begin{equation} \label{diagram: latching objects}
\begin{CD}
\degenerate_d (\first\cap\second) @>>> (\first\cap\second)_d\\
@VVV\\
\degenerate_d (\second)
\end{CD}
\end{equation}
 to $\second_{d}$ is a cofibration in topological spaces. But $\first_{d}$, $\second_{d}$, and $(\first\cap\second)_{d}$ are disjoint unions of $\Uof{m}$-orbits, as are their subspaces of degenerate simplices. As a consequence, each of the spaces in \eqref{diagram: latching objects} is a union of path components of~$\second_d$, and the maps are inclusions. So the pushout is likewise a union of path components of~$\second_d$, and its inclusion into $\second_d$ is a cofibration.

We also need to know that the map remains a Reedy cofibration after taking
$J$-fixed points for any subgroup $J$ of~$\Uof{m}$, and the argument is obtained by combining the previous two paragraphs, since taking $J$-fixed point spaces results in spaces of $d$-simplices that are orbits of~$\CentIdent{J}$.
\end{proof}

Now that we have constructed the $\Uof{m}$-equivariant subcomplex $\tensorsubcomplex$ of~$\weakfixedspecific{\Gamma_{k}}{mp^k}$ with the desired homotopy type, we need to prove that there is a deformation retraction of $\Uof{m}$-spaces from $\weakfixedspecific{\Gamma_{k}}{mp^k}$ to~$\tensorsubcomplex$. We obtain it in steps, by constructing an interpolating subcategory between $\tensorsubcomplex$ and~$\weakfixedspecific{\Gamma_{k}}{mp^{k}}$ using the following definition.

\begin{definition}    \label{definition: uniform isotropy}
Suppose that $H\subseteq\Un$ and $\lambda$ is an object of $\weakfixed{H}$.
\begin{enumerate}
\item For $v\in\class(\lambda)$, define the
\defining{$H$-isotropy group} of~$v$ as $\{g\in H: gv=v\}$.
\item
 We say that $\lambda$ has \defining{uniform $H$-isotropy} if every $v\in\class(\lambda)$ has the same $H$-isotropy group. In this case, we write $\isogroupof{\lambda}$ for the $H$-isotropy group of each element of~$\class(\lambda)$.
\item We say that $\lambda$ is \defining{$H$-isotypical} if $\lambda$ has uniform $H$-isotropy and $\isogroupof{\lambda}$ acts isotypically on each component $v\in\class(\lambda)$.
\end{enumerate}
\end{definition}

There is an easy criterion guaranteeing uniform isotropy.

\begin{lemma}   \label{lemma: transitive means uniform}
Suppose $\lambda\in\Obj\weakfixed{H}$ has the property that for some $v\in\class(\lambda)$, the $H$-isotropy subgroup of $v$ is normal in~$H$. If $H$ acts  transitively on $\class(\lambda)$, then
$\lambda$ has uniform $H$-isotropy.
\end{lemma}

\begin{proof}
Since the action of $H$ on $\class(\lambda)$ is transitive, the $H$-isotropy subgroups of elements of $\class(\lambda)$ are all conjugate in $H$ to the isotropy group of~$v$, which is assumed to be normal. We conclude that they are all actually the same.
\end{proof}

\begin{corollary}
All objects in $\weakfixedspecific{\Gamma_{k}}{p^k}$ have uniform isotropy, as do objects in~$\tensorsubcomplex$.
\end{corollary}

\begin{proof}
Since $\Gamma_{k}$ acts irreducibly on $\complexes^{k}$, it necessarily acts transitively on the components of any weakly  $\Gamma_{k}$-fixed decomposition of~$\Cpk$. An object $\mu\otimes\nu$ in $\tensorsubcomplex$ has the same $\Gamma_{k}$-isotropy group as~$\nu$.
\end{proof}

Since objects in $\tensorsubcomplex$ have uniform $\Gamma_{k}$-isotropy, whereas objects in $\weakfixedspecific{\Gamma_{k}}{mp^{k}}$ may not, we consider an interpolating subcomplex that focuses on uniform isotropy.

\begin{definition}
Let $\Uniform{\Gamma_{k}}{mp^k}$ be the subposet of $\weakfixedspecific{\Gamma_{k}}{mp^k}$
given by objects with uniform $\Gamma_{k}$-isotropy.
\end{definition}

\begin{proposition}   \label{proposition: inclusion of uniform subposet}
The inclusion $\Uniform{\Gamma_{k}}{mp^k}\subseteq\weakfixedspecific{\Gamma_{k}}{mp^k}$
induces a homotopy equivalence of $\Uof{m}$-spaces on nerves.
\end{proposition}

\begin{proof}
  As usual, we define a $\Uof{m}$-equivariant deformation retraction.
  Let $\lambda$ be a
  decomposition that is weakly fixed by $\Gamma_{k}$, say
  $\class(\lambda)=\{v_{1},\dots,v_{j}\}$.
  Let $\isogroupof{v_1},\dots,\isogroupof{v_j}$ be
  the $\Gamma_{k}$-isotropy subgroups of $v_{1},\dots,v_{j}$,
  respectively. Each of them contains $S^{1}$ and is therefore
  normal in~$\Gamma_{k}$.
  Let $\isogroupof{\lambda}\subseteq\Gamma_{k}$ be the product,
  $\isogroupof{\lambda}=\isogroupof{v_1}\dots \isogroupof{v_j}$.
  The construction of $\isogroupof{\lambda}$ is $\Uof{m}$-invariant, since $\Uof{m}$ centralizes~$\Gamma_{k}$.

  Recall that $\glom{\lambda}{\isogroupof{\lambda}}$ denotes the strongly
$\isogroupof{\lambda}$-fixed coarsening of $\lambda$ created by summing components in the same orbit of the action of $\isogroupof{\lambda}$ on~$\class(\lambda)$ (see
Definition~\ref{orbit_partition_def}). Consider the assignment
\[
\lambda\mapsto\glom{\lambda}{\isogroupof{\lambda}},
\]
and observe that it is~$\Uof{m}$-equivariant.

First we check that this assignment actually lands
in~$\weakfixedspecific{\Gamma_{k}}{mp^k}$. Because $S^{1}\subseteq
\isogroupof{\lambda}$, we know
$\isogroupof{\lambda}\triangleleft\Gamma_{k}$.
Lemma~\ref{lemma: glom} then tells us that
$\glom{\lambda}{\isogroupof{\lambda}}$ is weakly fixed by~$\Gamma_{k}$.
We also need to check that $\glom{\lambda}{\isogroupof{\lambda}}$
is in fact proper. If not, then $\isogroupof{\lambda}$ acts
transitively on $\class(\lambda)$, and therefore $\Gamma_{k}$ does
also. By Lemma~\ref{lemma: transitive means uniform}, we have
$\isogroupof{v_1}=\dots=\isogroupof{v_j}$, so
$\isogroupof{\lambda}=\isogroupof{v_1}$.
But then $\isogroupof{v_1}$ acts transitively on $\class(\lambda)$, which
is a contradiction since $\isogroupof{v_1}$ fixes $v_{1}$
and $\lambda$ is proper.

We now check that the assignment
$\lambda\mapsto\glom{\lambda}{\isogroupof{\lambda}}$ is continuous,
by considering the $\Gamma_{k}$-isotropy of decompositions in the same path component
of $\Obj(\Lcal_{mp^{k}})^{\Gamma_{k}}$ as~$\lambda$.
By Corollary~\ref{cor: J fixed path components}, the path component of $\lambda$
consists of elements $c\lambda$ where $c$ is in the identity component of
$\Cen_{\Uof{mp^k}}\left(\Gamma_{k}\right)$. However, if $v\in\class(\lambda)$ then
$\isogroupof{c\,v}=c\left(\isogroupof{v}\right)c^{-1}=\isogroupof{v}$. As a result,
$\isogroupof{\lambda}=\isogroupof{c\lambda}$. Therefore the same subgroup is being
used to coarsen the entire path component of~$\lambda$, and we already know that
this operation is continuous from Lemma~\ref{lemma: continuity of glom}.

Next we verify that the assignment
$\lambda\mapsto\glom{\lambda}{\isogroupof{\lambda}}$
respects coarsenings: we must check that if $\mu\leq\lambda$, then
$\glom{\mu}{\isogroupof{\mu}}\leq\glom{\lambda}{\isogroupof{\lambda}}$.
Suppose that $w\in\class(\mu)$ and that $w\subseteq v\in\class(\lambda)$, and further
suppose that $\gamma\in\isogroupof{w}$. Since $\isogroupof{w} \subset \Gamma_k$ and $\Gamma_k$ weakly fixes $\lambda$, we have that $\gamma v$
is either equal to $v$ or orthogonal to $v$, whence $\gamma v=v$.
Hence $\gamma\in\isogroupof{v}$, establishing that
$w\subseteq v$ implies $\isogroupof{w}\subseteq\isogroupof{v}$.
As a result, $\isogroupof{\mu}\subseteq\isogroupof{\lambda}$. We
conclude that
$\glom{\mu}{\isogroupof{\mu}}\leq\glom{\lambda}{\isogroupof{\mu}}
\leq\glom{\lambda}{\isogroupof{\lambda}}$.

The coarsening $\lambda\rightarrow\glom{\lambda}{\isogroupof{\lambda}}$
gives a $\Uof{m}$-equivariant homotopy
from the identity functor on $\weakfixedspecific{\Gamma_{k}}{mp^k}$ to the composition of retraction and inclusion, and the lemma follows.
\end{proof}

At this point, we have
\[
\tensorsubcomplex\subseteq\Uniform{\Gamma_{k}}{mp^k}\subseteq\weakfixedspecific{\Gamma_{k}}{mp^k}
\]
and the second inclusion is an equivalence of $\Uof{m}$-spaces. Next, we interpolate again by defining $\isotypicsubcomplex$ as the subcomplex of $\Uniform{\Gamma_{k}}{mp^k}$ of decompositions $\lambda$ that are $\isogroupof{\lambda}$-isotypical, where $\isogroupof{\lambda}$ is the (uniform) $\Gamma_k$-isotropy of $\lambda$.

\begin{proposition}     \label{proposition: isotypic to unif}
The inclusion $\isotypicsubcomplex\hookrightarrow\Uniform{\Gamma_{k}}{mp^k}$
induces a homotopy equivalence of $\Uof{m}$-spaces on nerves.
\end{proposition}

\begin{proof}
  Again we define a $\Uof{m}$-equivariant deformation retraction. Let $\lambda$ be a
  decomposition in~$\Uniform{\Gamma_{k}}{mp^k}$ with
  $\Gamma_{k}$-isotropy $\isogroupof{\lambda}\subseteq\Gamma_{k}$. Now
  consider the assignment
  $\lambda\mapsto\isorefine{\lambda}{\isogroupof{\lambda}}$, which we assert
is continuous. As in the proof of
Proposition~\ref{proposition: inclusion of uniform subposet}, continuity follows from the
fact that $\isogroupof{\lambda}$ is constant on each path component, and
isotypical refinement with respect to a subgroup is continuous on each path
component (Lemma~\ref{lemma: continuity of isorefine}). Further, the value of $\isogroupof{\lambda}$ does not change when we act on $\lambda$ by an element of~$\Uof{m}$, because $\Uof{m}$ centralizes~$\Gamma_{k}$. And because $\Uof{m}$ also necessarily centralizes $\isogroupof{\lambda}$,
the assignment $\lambda\mapsto\isorefine{\lambda}{\isogroupof{\lambda}}$ is also $\Uof{m}$-equivariant.

  To check that $\lambda\mapsto\isorefine{\lambda}{\isogroupof{\lambda}}$
  is natural in $\lambda$,
  suppose given $\mu\leq\lambda$ with uniform $\Gamma_{k}$-isotropy
  $\isogroupof{\mu}$ and $\isogroupof{\lambda}$, respectively.  As in the proof of Proposition~\ref{proposition: inclusion of uniform subposet}, $\isogroupof{\mu}\subseteq\isogroupof{\lambda}$. We
  need to check that $\isorefine{\mu}{\isogroupof{\mu}}$ is a
  refinement of~$\isorefine{\lambda}{\isogroupof{\lambda}}$. Suppose
  we have $w\in\class(\mu)$ with $w\subseteq v\in\class(\lambda)$.  We
  need to prove that for every $\isogroupof{\mu}$-isotypical summand
  of $w$, there exists a $\isogroupof{\lambda}$-isotypical summand
  of~$v$ that contains it.  It is sufficient to show that
 non-isomorphic irreducible representations of $\isogroupof{\lambda}$
  contained in $v$ cannot contain isomorphic irreducible
  representations of~$\isogroupof{\mu}$.

  However, any $\isogroupof{\lambda}$-irreducible subspace of $v$ is
  contained in the restriction of the standard representation of
  $\Gamma_{k}$ to~$\isogroupof{\lambda}$.  By Frobenius reciprocity
  (\cite[Theorem 9.9]{Knapp}), the restriction of the standard representation of
  $\Gamma_{k}$ to $\isogroupof{\mu}$ splits as the sum
  of pairwise nonisomorphic $\isogroupof{\mu}$-irreducibles. Since
  $\isogroupof{\mu}\subseteq\isogroupof{\lambda}$, we conclude that
  nonisomorphic representations of $\isogroupof{\lambda}$ contained
  in~$v$ cannot contain isomorphic representations of~$\isogroupof{\mu}$.

Finally, the coarsening morphism
$\isorefine{\lambda}{\isogroupof{\lambda}}\rightarrow\lambda$ provides the
necessary $\Uof{m}$-equivariant natural transformation from the composition of retraction and inclusion to the identity functor.
\end{proof}

We continue with a result on decompositions of tensor products. Since the particular properties of $\Gamma_{k}$ are not needed, we use a more general statement.
\begin{lemma}   \label{lemma: well-defined tensor}
  Suppose that $H\subseteq\Uof{i}$ acts irreducibly on~$\complexes^{i}$,
  and let~$\complexes^{m}$ have trivial $H$-action.
  If $v$ is an $H$-invariant subspace of
  $\complexes^{m}\otimes\complexes^{i}$, then there exists a
  well-defined subspace $w_{v}\subseteq\complexes^{m}$ such that
  $v=w_{v}\otimes\complexes^{i}$. The assignment $v\mapsto w_{v}$ is
  natural in~$v$, is continuous, preserves orthogonality, and is $\Uof{m}$-equivariant.
\end{lemma}

\begin{corollary}
If $n=mp^{k}$ then there is a $\Uof{m}$-equivariant isomorphism
$\strongfixedspecific{\Gamma_{k}}{mp^{k}}\cong\Lcal_{m}
$.
\end{corollary}

\begin{proof}[Proof of Lemma~\ref{lemma: well-defined tensor}]
By Schur's Lemma, there is an isomorphism
\[
\End_{\complexes}\left(\complexes^{m}\right)
\xrightarrow{\ \cong\ }
\End_{H}\left(\complexes^{m}\otimes\complexes^{i}\right)
\]
given by the function $f\mapsto f\otimes\complexes^{i}$, and the function is $\Uof{m}$-equivariant by inspection.

The idempotent
$e_{v}\in\End\left(\complexes^{m}\otimes\complexes^{i}\right)$
corresponding to orthogonal projection to~$v$ is $H$-equivariant.
Hence $e_{v}$ has the form $f_{v}\otimes\complexes^{i}$ for some
$f_{v}\in\End_{\complexes}\left(\complexes^{m}\right)$.
Since $f_{v}$ is necessarily an idempotent, it defines a subspace
$w_{v}=\im\left(f_{v}\right)$, and $v=w_{v}\otimes\complexes^{i}$.
The three assignments $v\mapsto e_{v}\mapsto f_{v}\mapsto w_{v}$ are
each continuous. Finally, $v_{1}\perp v_2$
implies that $w_{v_{1}}\perp w_{v_{2}}$ (for example, because the
corresponding idempotents compose to zero).
\end{proof}

We now have everything we need to prove
Theorem~\ref{theorem: join theorem}.

\begin{proof}[Proof of Theorem~\ref{theorem: join theorem}]
We have a sequence of poset inclusions,
\[
\tensorsubcomplex\subseteq\isotypicsubcomplex\subseteq
     \Uniform{\Gamma_{k}}{mp^k}\subseteq\weakfixedspecific{\Gamma_{k}}{mp^k}
\]
and we have already established that the second two inclusions
induce homotopy equivalences of $\Uof{m}$-spaces on nerves (Proposition~\ref{proposition: isotypic to unif}
and Proposition~\ref{proposition: inclusion of uniform subposet}, respectively).
To finish the proof, we show that $\tensorsubcomplex\subseteq\isotypicsubcomplex$ is a $\Uof{m}$-equivariant isomorphism.

  Suppose that $\lambda$ is a decomposition of
  $\Cn\cong\complexes^{m}\otimes\Cpk$ that lies in~$\isotypicsubcomplex$, that is, $\lambda$ is
  weakly fixed by~$\Gamma_{k}$, has uniform $\Gamma_{k}$-isotropy
  group~$\isogroupof{\lambda}$, and is
  $\isogroupof{\lambda}$-isotypic. We will prove that there exists a decomposition
  $\mu$ of $\complexes^{m}$ and a weakly $\Gamma_{k}$-fixed
  decomposition $\nu$ of $\Cpk$ such that
  $\lambda=\mu\otimes\nu$. Note that if $\lambda$ is proper, then one of
$\mu$ or $\nu$ (but not both) could have a single component.

  As in the proof of Proposition~\ref{proposition: inclusion of uniform subposet},
  Frobenius reciprocity tells us that the restriction of the
  standard representation of $\Gamma_{k}$ on $\Cpk$ to
  the subgroup $\isogroupof{\lambda}$ splits as a sum of
   pairwise nonisomorphic
  $\isogroupof{\lambda}$-irreducible subspaces, say $v_1, v_2,\ldots, v_r$.
  These subspaces are orthogonal, and we use them to define the
  decomposition $\nu$ of $\Cpk$
  with $\class(\nu)=\{v_1, v_2,\ldots, v_r\}$.
  Note that $\nu$ is weakly fixed by $\Gamma_k$ since
  $\isogroupof{\lambda}\triangleleft\Gamma_{k}$.
  The elements of $\class(\nu)$ are permuted transitively
  by the action of~$\Gamma_{k}$ because $\Gamma_{k}$ acts irreducibly
  on~$\Cpk$.

  Recall that by assumption, the components of $\lambda$ are isotypical representations of $\isogroupof{\lambda}$. Fix $v\in\class(\nu)$, and consider the components of $\lambda$ that are $\isogroupof{\lambda}$-isotypical for the irreducible representation~$v$,
  say $c_{1},\dots,c_{s}\in\class(\lambda)$. Each one is an
  $\isogroupof{\lambda}$-subrepresentation of
  of~$\complexes^m \otimes v$, which is the canonical $v$-isotypical summand of $\complexes^{m}\otimes\Cpk$. Thus by
  Lemma~\ref{lemma: well-defined tensor}, there exist
  subspaces $w_{1},\dots,w_{s}$ of $\complexes^m$ such that
  $c_{1} = w_{1} \otimes v$,\dots,$c_{s} = w_{s} \otimes v$.
  We take $\mu$ to be the decomposition defined by
  $\class(\mu)=\{w_{1},\dots,w_{s}\}$.

  Because every component of $\lambda$ is isotypical for a unique element
  of~$\class(\nu)$, and $\Gamma_{k}$ acts transitively
  on~$\class(\nu)$, we know that for every $c\in\class(\lambda)$ there
  exists $\gamma\in\Gamma_{k}$ such that
  $\gamma c\subseteq \complexes^{m}\otimes v$. Since
  $\gamma c\in\class(\lambda)$, we must have that
  $\gamma c$ is equal to some $w_{i}\otimes v$.

  Hence the set $\class(\lambda)$ must be the union of the orbits of
  $w_{1} \otimes v$,\dots,$w_{s} \otimes v$ under the action
  of $\Gamma_{k}$. These orbits are, respectively,
  $w_{1}\otimes\nu$,\dots,$w_{s}\otimes\nu$. We conclude that
  $\lambda=\mu\otimes\nu$.
\end{proof}

\section{Proof of the classification theorem}
\label{section: proof of classification theorem}

In this section, we prove the classification theorem for problematic \pdash toral subgroups of~$\Un$, Theorem~\ref{theorem: new classification theorem}.
Recall that if $m$ and $p$ are coprime, then any problematic $p$-toral subgroup of $\Uof{mp^k}$ is subconjugate to $\Gamma_{k}$ acting on $\complexes^{mp^k}$ by $m$
copies of the standard representation of $\Gamma_{k}\subseteq\Upk$
(Theorem~\ref{theorem: Non_contractible_implies_subgroup_Gamma_diag}).
Furthermore, all subgroups of $\Gamma_{k}$ that contain $S^{1}$ have the form $\Gamma_{s}\times\Delta_{t}$, where $s+t\leq k$
(Proposition~\ref{proposition: group classification}).
Hence, we can use Theorem~\ref{theorem: join theorem} to obtain a reduction of the classification theorem as follows.

\begin{proposition} \label{proposition: reduction to Delta_t}
Let $H$ be a subgroup of $\Gamma_{k}\subseteq\Un$ acting by a multiple of the standard representation of $\Gamma_{k}\subseteq\Upk$. Suppose that $H\cong\Gamma_{s}\times\Delta_{t}$, and let $r=n/p^{s+t}$. Then $\weakfixed{H}$ is contractible (respectively, mod $p$ acyclic) if and only if the $s$-fold suspension of $\weakfixedspecific{\Delta_{t}}{rp^t}$ is contractible (respectively, mod $p$ acyclic).
\end{proposition}

\begin{proof}
If $s=0$, then we are considering $H\cong S^{1}\times\Delta_{t}$ and $n=rp^{t}$.
Hence $\weakfixed{H}
        =\weakfixedspecific{\Delta_{t}}{rp^t}$,
so the proposition is tautologically true.

For $s>0$, Theorem~\ref{theorem: join theorem} gives an equivalence of
$\Uof{rp^t}$-spaces
\[
\weakfixed{\Gamma_s}\simeq\Lcal_{rp^t}\join\weakfixedspecific{\Gamma_s}{p^s}.
\]
Taking fixed points under the action of $\Delta_{t}\subseteq\Uof{rp^t}$ gives
\[
\weakfixed{H}
\simeq\weakfixedspecific{\Delta_{t}}{rp^t}\join\weakfixedspecific{\Gamma_s}{p^s}.
\]
Recall that $\weakfixedspecific{\Gamma_s}{p^s}$ is a wedge of spheres of dimension $s-1$ (Proposition~\ref{proposition: fixed points of Gamma_k}).
Choosing basepoints gives
$X\join Y\simeq\Sigma(X\wedge Y)$ for any $X$ and~$Y$, so $\weakfixed{H}$ has the homotopy type of a wedge of $s$-fold suspensions of~$\weakfixedspecific{\Delta_{t}}{rp^t}$.
\end{proof}

Most of the remainder of this section is devoted to studying fixed points of $\Delta_{t}$ acting on~$\Lcal_{rp^t}$, in preparation for assembling the classification theorem at the end of the section. When $r=1$, we can quote the following result.

\begin{proposition}[\cite{Arone-Lesh-Tits}]
        \label{proposition: fixed points of Delta_t}
For $t\geq 1$, the fixed point space of $\Delta_t$ acting on $\Lcal_{p^t}$ contains, as a retract, a wedge of spheres of dimension~$(t-1)$.
\end{proposition}

Next we look at the special case of $\weakfixedspecific{\Delta_{t}}{mp^t}$ where
$m$ is a positive power of a prime $q$ different from~$p$.
In this case, we do not get contractibility, but we do have mod~$p$ acyclicity. Because we have two primes in play, we specify them explicitly in the notation for the following proposition. We write $\Delta_{t}(p)\cong(\integers/p)^t$, and we write $\Gamma_{r}(q)$ for the mod $q$ irreducible projective elementary abelian $q$-group of $\Uof{q^r}$,
\[
1\rightarrow S^{1}\rightarrow\Gamma_{r}(q)\rightarrow(\integers/q)^{2r}\rightarrow 1.
\]

 \begin{proposition}     \label{prop: full Delta is problematic}
 If $m=q^r$ where $q$ is a prime different from $p$ and $r>0$, then $\weakfixedspecific{\Delta_t(p)}{mp^t}$ is mod $p$ acyclic, but has nontrivial mod $q$ homology, and in particular is not contractible.
 \end{proposition}

\begin{proof}
Mod $p$ acyclicity of $\weakfixedspecific{\Delta_t(p)}{mp^t}$ follows from Smith theory, since $\Lcal_{mp^t}$ is a finite complex and is mod $p$ acyclic by
Corollary~\ref{corollary: contractible}.
To prove the statement about mod $q$ homology of~$\weakfixedspecific{\Delta_t(p)}{mp^t}$, we use Smith theory again, this time
in reverse and for mod~$q$ homology, as follows.

We reverse the roles of $p$ and~$q$, and we consider the action of $\Delta_{t}(p)\times\Gamma_{r}(q)$ on $\complexes^{p^t}\otimes\complexes^{q^r}$.
By  Theorem~\ref{theorem: join theorem} applied to $\Gamma_{r}(q)$, with $\Delta_{t}(p)\subseteq \Uof{p^t}$, we find that
\[
\weakfixedspecific{\Delta_{t}(p)\times\Gamma_{r}(q)}{p^{t}q^{r}}
\simeq
\weakfixedspecific{\Delta_{t}(p)}{p^t}\join\weakfixedspecific{\Gamma_{r}(q)}{q^r}.
\]
By Proposition~\ref{proposition: fixed points of Gamma_k},
$\weakfixedspecific{\Gamma_{r}(q)}{q^r}$ is a wedge of spheres.
On the other hand, by Proposition~\ref{proposition: fixed points of Delta_t},
$\weakfixedspecific{\Delta_{t}(p)}{p^t}$ has a wedge of spheres as a retract. Hence
$\weakfixedspecific{\Delta_{t}(p)\times\Gamma_{r}(q)}{p^{t}q^{r}}$ also has a wedge of spheres as a retract, and therefore has nonzero mod~$q$ homology.

But in fact,
\[
\weakfixedspecific{\Gamma_{r}(q)\times\Delta_{t}(p)}{q^{r}p^{t}}
=
\left(
\weakfixedspecific{\Delta_{t}(p)}{q^{r}p^{t}}
\right)^{\Gamma_{r}(q)/S^1}.
\]
If $\weakfixedspecific{\Delta_{t}(p)}{q^{r}p^{t}}$ were mod~$q$ acyclic, then
we could apply Smith theory to the finite complex $\weakfixedspecific{\Delta_{t}(p)}{q^{r}p^{t}}$ to conclude that
$\weakfixedspecific{\Gamma_{r}(q)\times\Delta_{t}(p)}{q^{r}p^{t}}$ would be mod $q$ acyclic also, which we know is not the case.
\end{proof}

The bulk of the rest of this section is to study the case $\weakfixedspecific{\Delta_{t}}{mp^t}$ where $m$ is not a power of a prime, with the aim of showing it is contractible. The strategy resembles that of Section~\ref{section: joins}, in that we replace the category $\weakfixedspecific{\Delta_{t}}{mp^t}$ with the category $\Uniform{\Delta_{t}}{mp^t}$ of
$\Delta_{t}$-fixed decompositions with uniform $\Delta_{t}$-isotropy, and then
we decompose $\Uniform{\Delta_{t}}{mp^t}$ into a union of two categories, the union of whose nerves gives the nerve of $\Uniform{\Delta_{t}}{mp^t}$.

\begin{definition}
\mbox{}\hfill
\begin{enumerate}
\item Let $\nontransitivespecific{\Delta_{t}}{mp^t}$ denote the subposet of $\Uniform{\Delta_{t}}{mp^t}$ consisting of decompositions $\lambda$ that have a nontransitive action of $\Delta_{t}$ on~$\class(\lambda)$.
\item Let $\properspecific{\Delta_{t}}{mp^t}$ denote the subposet of $\Uniform{\Delta_{t}}{mp^t}$ consisting of decompositions $\lambda$ such that the action of $\Delta_{t}$ on $\class(\lambda)$ is nontrivial; that is, $\lambda$ is not strongly fixed by~$\Delta_{t}$.
\end{enumerate}
\end{definition}

As in Section~\ref{section: joins}, we observe that if $\lambda$ is an object of $\nontransitivespecific{\Delta_{t}}{mp^t}$, then all refinements of $\lambda$ are objects in $\nontransitivespecific{\Delta_{t}}{mp^t}$ as well, and likewise $\properspecific{\Delta_{t}}{mp^t}$ contains all refinements of its objects. In addition, any $\lambda$ with uniform $\Delta_t$-isotropy is in one of these subspaces; hence the nerve of $\Uniform{\Delta_{t}}{mp^t}$ is the union of the nerves of the two subcategories, and we have a pushout diagram that is also a homotopy pushout:
\begin{equation}    \label{diagram: Delta_t pushout}
\begin{CD}
\nontransitivespecific{\Delta_{t}}{mp^t} \cap \properspecific{\Delta_{t}}{mp^t}
   @>>> \nontransitivespecific{\Delta_{t}}{mp^t}\\
@VVV  @VVV\\
\properspecific{\Delta_{t}}{mp^t}@>>> \Uniform{\Delta_{t}}{mp^t}
\end{CD}.
\end{equation}
\medskip

\begin{lemma}
The object spaces of the subcategories $\nontransitivespecific{\Delta_{t}}{mp^t}$ and $\properspecific{\Delta_{t}}{mp^t}$ are each unions of path components of the object space of $\Uniform{\Delta_{t}}{mp^t}$.
\end{lemma}
\begin{proof}
We apply Corollary~\ref{cor: J fixed path components}.
The action of the centralizer of $\Delta_{t}$ on the object space preserves the defining characteristics of $\nontransitive{\Delta_{t}}$ and $\properspecific{\Delta_{t}}{mp^t}$.
\end{proof}

The initial step to prove that the nerve of $\Uniform{\Delta_{t}}{mp^t}$ is contractible is to establish that the upper right-hand corner of
\eqref{diagram: Delta_t pushout} is contractible.

\begin{lemma} \label{lemma: nontransitive}
  Suppose that $\Delta_{t}$ is nontrivial and  acts on $\complexes^{m}\otimes\complexes^{p^t}$ as the tensor product of the trivial representation and the regular representation. Then $\nontransitivespecific{\Delta_{t}}{mp^t}$ has contractible
  nerve.
\end{lemma}

\begin{proof}
As usual, consider the inclusions
\begin{equation}  \label{eq: nontransitive subcat inclusions}
\isofixedspecific{\Delta_{t}}{mp^t}
     \hookrightarrow \strongfixedspecific{\Delta_{t}}{mp^t}
     \hookrightarrow \nontransitivespecific{\Delta_{t}}{mp^t}.
\end{equation}
The first map has the retraction functor
$\lambda\mapsto\isorefine{\lambda}{\Delta_{t}}$. The second map has the
retraction functor $\lambda\mapsto\glom{\lambda}{\Delta_{t}}$. (Note that
$\glom{\lambda}{\Delta_{t}}$ is necessarily proper by the definition
of~$\nontransitivespecific{\Delta_{t}}{mp^t}$.) Hence both inclusions of
\eqref{eq: nontransitive subcat inclusions} induce equivalences of nerves.
Finally, the action of $\Delta_{t}$ on $\complexes^{mp^t}$ is a multiple of the
regular representation, and is therefore not isotypic. Hence
$\isofixedspecific{\Delta_{t}}{mp^t}$ has a terminal object, namely the canonical
$\Delta_{t}$-isotypic decomposition of~$\complexes^{mp^t}$, and has contractible
nerve.
\end{proof}

Since we have shown that the upper right corner of diagram
\eqref{diagram: Delta_t pushout} has contractible nerve, to establish that $\Uniform{\Delta_{t}}{mp^t}$ has contractible nerve it is sufficient to show that the left-hand vertical map of~\eqref{diagram: Delta_t pushout} induces a homotopy equivalence on nerves. We will apply a topological version of Quillen's Theorem~A for categories internal to spaces, as stated and proved in \cite[Theorem 5.8]{Libman}. To check that the conditions of the cited theorem are satisfied, the first step is to look at the overcategories for objects in the lower left-hand corner of~\eqref{diagram: Delta_t pushout}.

\begin{proposition}   \label{prop: overcategory is contractible}
Let $\lambda$ be an object of $\properspecific{\Delta_{t}}{mp^t}$, and assume that $m>1$ is not a power of a prime. Let $\intersect$ denote the intersection
\[
\intersect
   =\nontransitivespecific{\Delta_{t}}{mp^t} \cap \properspecific{\Delta_{t}}{mp^t}.
\]
Then the overcategory $\intersect\downarrow\lambda$ has contractible nerve.
\end{proposition}
\begin{proof}
The category $\intersect\downarrow\lambda$ is the poset of refinements (not necessarily strict) of $\lambda$ that happen to be in $\nontransitivespecific{\Delta_{t}}{mp^t}$. In other words, $\intersect\downarrow\lambda$ contains objects $\mu\rightarrow\lambda$ such that the action of $\Delta_{t}$ on $\class(\mu)$ is not transitive.
If $\lambda$ is in $\nontransitivespecific{\Delta_{t}}{mp^t}$, then the identity morphism of $\lambda$ is a terminal object of $\intersect\downarrow\lambda$; therefore the category $\intersect\downarrow\lambda$ has contractible nerve.

So suppose that $\lambda$ is in $\properspecific{\Delta_{t}}{mp^t}$, but not in $\nontransitivespecific{\Delta_{t}}{mp^t}$. In particular, $\lambda$ has uniform
$\Delta_{t}$-isotropy~$\isogroupof{\lambda}$ properly contained in~$\Delta_{t}$, and $\class(\lambda)$ has a transitive action of~$\Delta_{t}$. To specify a refinement $\mu$ of $\lambda$
that lies in $\nontransitivespecific{\Delta_{t}}{mp^t}$, it is sufficient to choose one component $v\in\class(\lambda)$, and to specify an orthogonal decomposition of~$v$, call it $\mu_{v}$, such that
\begin{itemize}
\item $\mu_{v}$ is (weakly) stabilized by the action of $\isogroupof{\lambda}$ on~$v$,
\item $\isogroupof{\lambda}$ acts non-transitively on $\class(\mu_{v})$ (hence $\mu_{v}$ is proper), and
\item components of $\mu_{v}$ have uniform $\isogroupof{\lambda}$-isotropy.
\end{itemize}
The rest of $\mu$ is determined by transitivity of the $\Delta_{t}$-action on~$\class(\lambda)$. If we denote the dimension of $v$ by~$\dimv$, then the above shows that
\begin{equation}
\intersect\downarrow\lambda \cong \nontransitivespecific{\isogroupof{\lambda}}{r}.
\end{equation}

There are two cases: $\isogroupof{\lambda}=0$, and $\isogroupof{\lambda}$ is nontrivial. If $\isogroupof{\lambda}=0$, then $\nontransitivespecific{\isogroupof{\lambda}}{r}\cong\Lcal_{r}$. Because $r$ is a multiple
of~$m>1$, which is not the power of a prime, we know that $\Lcal_{r}$ is nonempty and has contractible nerve by Corollary~\ref{corollary: contractible}.
On the other hand, if $\isogroupof{\lambda}$ is not trivial, then
$\nontransitivespecific{\isogroupof{\lambda}}{r}$ has contractible nerve by Lemma~\ref{lemma: nontransitive}.
\end{proof}

The next condition to check in order to apply \cite[Theorem 5.8]{Libman} is the Reedy cofibrancy of an associated simplicial space.  Here, we need to consider an intermediate object between a topological category and its nerve, namely the \emph{simplicial nerve}, which is a simplicial diagram of spaces obtained by taking the levelwise nerve of the original topological category.  The nerve is then obtained by applying a realization functor, which gives a space.  The category of simplicial spaces can be given a Reedy model structure which we do not need here, but the properties of cofibrant objects in that model structure are sufficiently nice that they will be helpful in the results that follow.

\begin{lemma}   \label{lemma: Reedy cofibrant nerves}
The categories
$\intersect = \nontransitivespecific{\Delta_{t}}{mp^t}\cap\properspecific{\Delta_{t}}{mp^t}$
 and $\properspecific{\Delta_{t}}{mp^t}$ have simplicial nerves that
are Reedy cofibrant simplicial spaces.
\end{lemma}

\begin{proof}
We apply the criterion for Reedy cofibrancy in \cite[Proposition A.2.2]{Libman}. This criterion says that for a simplicial space $Y$ to be Reedy cofibrant, it is sufficient that $Y$ have an action of a compact Lie group $G$ such that in each simplicial dimension, $Y$ is a disjoint union of $G$-orbits, so that $Y/G$ is discrete. Here, we take $G=\CentIdent{\Delta_{t}}$, the identity component of the centralizer of $\Delta_{t}$ in $\Uof{mp^t}$. We observe the following.
\begin{itemize}
\item In order for a decomposition $\lambda$ to be in $\nontransitivespecific{\Delta_{t}}{mp^t}$, the action of $\Delta_{t}$ on $\class(\lambda)$ must be nontransitive, and $\lambda$ must have uniform $\Delta_{t}$-isotropy.
\item In order for a decomposition $\lambda$ to be in $\properspecific{\Delta_{t}}{mp^t}$, the action of $\Delta_{t}$ on $\class(\lambda)$ must be nontrivial, and $\lambda$ must have uniform $\Delta_{t}$-isotropy.
\end{itemize}
If $\lambda$ satisfies either (or both) of these conditions and $c\in\CentIdent{\Delta_{t}}$, then $c\lambda$ satisfies the same condition(s) as~$\lambda$.

But Corollary~\ref{cor: J fixed path components} tells us that the path components of both the object and morphism spaces of $\weakfixedspecific{\Delta_{t}}{mp^t}$ are orbits of~$\CentIdent{\Delta_{t}}$.
Applying the observations above tells us that both
$\nontransitivespecific{\Delta_{t}}{mp^t}\cap\properspecific{\Delta_{t}}{mp^t}$
and $\properspecific{\Delta_{t}}{mp^t}$ are unions of path components of
$\weakfixedspecific{\Delta_{t}}{mp^t}$, each of which is an orbit of~$\CentIdent{\Delta_{t}}$. We conclude that after taking the quotient by the
action of~$\CentIdent{\Delta_{t}}$, we have a discrete space in each simplicial dimension of the simplicial nerves, and
\cite[Proposition A.2.2]{Libman} now gives the desired result.
\end{proof}

\begin{proposition}   \label{proposition: apply Quillen A}
Suppose $ m>1$ is not a power of a prime. The inclusion of topological categories
\[
\intersect = \nontransitivespecific{\Delta_{t}}{mp^t} \cap \properspecific{\Delta_{t}}{mp^t}
\longrightarrow
\properspecific{\Delta_{t}}{mp^t}
\]
induces a homotopy equivalence of nerves.
\end{proposition}

\begin{proof}
We prove this result by applying Libman's version of Quillen's Theorem~A \cite[Theorem 5.8]{Libman} to the inclusion of opposite categories
\[
j \colon  \intersect^{op}
\longrightarrow
\left[\properspecific{\Delta_{t}}{mp^t}\right]^{op}.
\]
 Thus maps are now refinements of decompositions, i.e., if $\mu $ is a refinement of $ \lambda$, we have a map $\lambda \to \mu$ in the opposite category.

To apply the cited theorem, we need to know that the nerves of $\intersect^{op}$ and $\left[\properspecific{\Delta_{t}}{mp^t}\right]^{op}$ are Reedy cofibrant; since Reedy cofibrancy is preserved by taking opposites, we have this condition by Lemma \ref{lemma: Reedy cofibrant nerves}.

Next, we need to know that for any $\lambda \in\left[\properspecific{\Delta_{t}}{mp^t}\right]^{op}$, the nerve of the undercategory $\lambda \downarrow j$ is Reedy cofibrant; the proof can be handled just as that of Lemma~\ref{lemma: Reedy cofibrant nerves}. Specifically, the object space of $\lambda \downarrow j$ consists of $\mu \subseteq \lambda$, where $\mu $ is in~$\intersect$. Let $\UnIsotropyof{\lambda}$ denote the $\Un$-isotropy group of $\lambda$; then the group $\CentIdent{\Delta_{t}} \cap \UnIsotropyof{\lambda}$ acts transitively on each path component of $\lambda \downarrow j$, and taking the quotient by this action gives a discrete space.

Further, we need to know that for any $\lambda \in\left[\properspecific{\Delta_{t}}{mp^t}\right]^{op}$, the nerve of the undercategory $\lambda \downarrow j$ is contractible. But this undercategory is the same as the overcategory $\intersect\downarrow\lambda$, which was proved to be contractible in Proposition~\ref{prop: overcategory is contractible}.

Finally, we need to check that the inclusion $j$ is absolutely tame, a technical condition which we now recall from \cite[Definition 5.5]{Libman}.
Associated to the inclusion~$j$ is a bisimplicial space
$\mathcal{X}(j)$ whose space
$\mathcal{X}_{s,r}(j)$ of $(s,r)$-bisimplices consists of chains of refinements and coarsenings
\[
 \{ \lambda_s \rightarrow \dots \rightarrow \lambda_0 \leftarrow \mu_0 \leftarrow \dots \leftarrow \mu_r \},
 \]
where $\lambda_i \in \properspecific{\Delta_{t}}{mp^t}$ and $\mu_j \in \intersect$. There is a projection map
\[ \pi_{s,r} \colon \mathcal{X}_{s,r}(j) \to \Nerve_s(\properspecific{\Delta_{t}}{mp^t}) \]
which forgets the chain of $\mu$'s. We say that $j$ is \emph{absolutely tame} if $\pi_{s,r}$ is a Serre fibration for all $s,r\geq 0$ \cite[Definition 5.7]{Libman}. We will verify this condition.

A connected component of the object space of $\properspecific{\Delta_{t}}{mp^t}$, which is also the space of zero simplices $\Nerve_0(\properspecific{\Delta_{t}}{mp^t})$, is a $\CentIdent{\Delta_{t}}$-orbit; more precisely, the connected component of a decomposition $\lambda_0 $ in $ \Nerve_0(\properspecific{\Delta_{t}}{mp^t})$ is $ \CentIdent{\Delta_{t}}/  (\CentIdent{\Delta_{t}} \cap \UnIsotropyof{\lambda_0}) $, where $\UnIsotropyof{\lambda_0}$ is the $\Un$-isotropy group of $\lambda_0$. Consequently, we can determine that the connected component of $\lambda_\bullet = \{ \lambda_s \rightarrow \dots \rightarrow \lambda_0 \} \in \Nerve_s(\properspecific{\Delta_{t}}{mp^t})$ is $\CentIdent{\Delta_{t}}/  (\CentIdent{\Delta_{t}} \cap \UnIsotropyof{\lambda_\bullet})$, where
\[
\UnIsotropyof{\lambda_\bullet} = \UnIsotropyof{\lambda_0} \cap \dots \cap \UnIsotropyof{\lambda_s}.
\]
Similarly, the connected component of
\[ (\lambda_\bullet,\mu_\bullet) = \{ \lambda_s \rightarrow \dots \rightarrow \lambda_0 \leftarrow \mu_0 \leftarrow \dots \leftarrow \mu_t \} \in \mathcal{X}_{s,t}(j) \]
is
\[  \CentIdent{\Delta_{t}}/  \left(\CentIdent{\Delta_{t}} \cap \UnIsotropyof{\lambda_\bullet} \cap  \UnIsotropyof{\mu_\bullet}\right). \]
Restricted to a connected component of $\mathcal{X}_{s,t}(j)$, the map $\pi_{s,t}$ is the quotient induced by the inclusion of subgroups
\[  \CentIdent{\Delta_{t}} \cap \UnIsotropyof{\lambda_\bullet} \cap \UnIsotropyof{\mu_\bullet} \subseteq  \CentIdent{V} \cap \UnIsotropyof{\lambda_\bullet}, \]
and that quotient is a Serre fibration.

We have established that all conditions of \cite[Theorem 5.8]{Libman} are satisfied for the map $j$, so we conclude that it induces a homotopy equivalence on nerves. Thus the inclusion (before taking opposites of the categories)
\[ \intersect \to \properspecific{\Delta_{t}}{mp^t}\]
also induces a homotopy equivalence on nerves, as claimed.
\end{proof}

The work above showing that we can use Quillen's Theorem~A allows us to establish the following key result.

\begin{theorem}\label{theorem: fixed points Delta_t}
   Let $m>1$, and let $\Delta_{t}$ act on $\complexes^{mp^{t}}$ by $m$ copies of the regular representation. If $m$ is not a power of a prime, then $\weakfixedspecific{\Delta_{t}}{mp^t}$ is contractible.
\end{theorem}

\begin{proof}
If $t=0$ then the result follows from Corollary~\ref{corollary: contractible},
so we can assume that $t>0$ and $\Delta_{t}$ is nontrivial.

As in Section~\ref{section: joins}, we proceed by decomposing the category of interest.
Let $\Uniform{\Delta_{t}}{mp^t}$ denote the subposet of $\weakfixedspecific{\Delta_{t}}{mp^t}$ consisting of
objects with uniform $\Delta_{t}$-isotropy.
Exactly as in Proposition~\ref{proposition: inclusion of uniform subposet},
the inclusion $\Uniform{\Delta_{t}}{mp^t}\hookrightarrow\weakfixedspecific{\Delta_{t}}{mp^t}$
induces a homotopy equivalence on nerves. We will show that $\Uniform{\Delta_{t}}{mp^t}$ has a contractible nerve.

The object space of $\Uniform{\Delta_{t}}{mp^t}$ is a union of path components of $\Obj\weakfixedspecific{\Delta_{t}}{mp^t}$, a fact which follows from
Corollary~\ref{cor: J fixed path components}, since the action of the centralizer of $\Delta_{t}$ preserves the property of having uniform $\Delta_{t}$-isotropy.
 Therefore, the pushout diagram
\begin{equation}    \label{eq: diagram for contractibility}
\xymatrix{
\nontransitive{\Delta_{t}}\cap\properspecific{\Delta_{t}}{mp^t} \ar[r]\ar[d] & \nontransitive{\Delta_{t}} \ar[d]\\
\properspecific{\Delta_{t}}{mp^t} \ar[r] & \Uniform{\Delta_{t}}{mp^t}
}
\end{equation}
gives rise to a homotopy pushout diagram after applying the nerve functor.

The upper right-hand corner has contractible nerve,
by Lemma~\ref{lemma: nontransitive}. The left-hand vertical map induces a homotopy equivalence on nerves, by Proposition~\ref{proposition: apply Quillen A}. Thus $\Uniform{\Delta_{t}}{mp^t}$ must also be contractible, as needed.
\end{proof}

This result brings us at last to the assembly of the proof of the classification theorem. Recall that a coistropic subgroup of $\Gamma_{k}$ is one that has the form $\Gamma_{s}\times\Delta_{t}$ where $s+t=k$.

\begin{classificationtheorem}
\classificationtheoremtext
\end{classificationtheorem}

\begin{proof}
Suppose that $H$ is problematic. By Theorem~\ref{theorem: Non_contractible_implies_subgroup_Gamma_diag}, we may assume that $H$ is subconjugate to a subgroup of $\Gamma_k$ acting on $\complexes^{m}\otimes\Cpk$ by the standard action on $\Cpk$ and the trivial action on~$\complexes^{m}$. Hence $H\cong\Gamma_{s}\times\Delta_{t}$ for $s+t\leq k$.

To prove the converse for $m=1$, we must show that all subgroups of $\Gamma_{k}$ are in fact problematic.
If $H=\Gamma_{k}$, then $\weakfixedspecific{\Gamma_{k}}{p^{k}}$ is a wedge of spheres
(Proposition~\ref{proposition: fixed points of Gamma_k}), and hence has nontrivial mod~$p$ homology.
Since we have assumed that $S^{1}\subseteq H\subseteq\Gamma_{k}$, the quotient $\Gamma_{k}/H$ is a finite \pdash group, so we can apply Smith theory
to
 \[
 \left(\weakfixedspecific{H}{p^k}\right)^{\Gamma_{k}/H}
    =\weakfixedspecific{\Gamma_{k}}{p^k}.
 \]
We conclude that $\weakfixedspecific{H}{p^{k}}$ likewise has nontrivial mod~$p$ homology, and therefore is not contractible. Therefore $H$ is problematic.

Now suppose $m>1$ and $H\cong\Gamma_{s}\times\Delta_{t}\subseteq\Gamma_{k}$ is problematic. Let $r=n/p^{s+t}$.
We apply Proposition~\ref{proposition: reduction to Delta_t} to conclude that
because $\weakfixed{H}$ is not contractible, then the
$s$-fold suspension of $\weakfixedspecific{\Delta_{t}}{rp^t}$ is not contractible.
Hence $\weakfixedspecific{\Delta_{t}}{rp^t}$ is likewise not contractible.
The contrapositive of Theorem~\ref{theorem: fixed points Delta_t} implies that
$r$ is a power of a prime, and since $m\mid r$ and $m$ is coprime to~$p$, this means that $m=r=q^i$
for $i>1$ and $q$ a prime different from $p$. In particular, since $n=rp^{s+t}=mp^{k}$, we have $s+t=k$, so $H$ is coisotropic.

In summary, if $\weakfixed{H}$ is noncontractible, then
\begin{itemize}
\item $\weakfixedspecific{\Delta_{t}}{rp^t}$ is not contractible, and
\item $r=q^{i}$ for $i>0$ and $q$ a prime different from~$p$.
\end{itemize}
To finish, we need to know that
$\weakfixedspecific{\Delta_{t}}{q^{i}p^{t}}$ is not contractible, a result provided
by Proposition~\ref{prop: full Delta is problematic}.
\end{proof}

\newpage

\newcommand{\etalchar}[1]{$^{#1}$}
\providecommand{\bysame}{\leavevmode\hbox to3em{\hrulefill}\thinspace}
\providecommand{\MR}{\relax\ifhmode\unskip\space\fi MR }
\providecommand{\MRhref}[2]{%
  \href{http://www.ams.org/mathscinet-getitem?mr=#1}{#2}
}
\providecommand{\href}[2]{#2}

\end{document}